\newtheorem{Theorem}{Theorem} 
\newtheorem{Proposition}{Proposition} 
\newtheorem{Lemma}{Lemma}
\newtheorem{Corollary}{Corollary}
\newtheorem*{Corollary*}{Corollary}
\newtheorem*{Theorem*}{Theorem}
\theoremstyle{remark}
\newtheorem{Example}{Example}
\newcommand\lie[1]{{\mathfrak #1}}
\newcommand\diag{{\rm diag}}
\newcommand\Proj{{\rm Proj\,}}
\newcommand\iso{{\mathrel{\ \cong\ }}}
\newcommand\tensor{{\otimes}}
\newcommand\calO{{\mathcal O}}
\newcommand\onto{\mathop{\twoheadrightarrow}}
\newcommand\into{\operatorname*{\hookrightarrow}}
\newcommand\union{\cup}
\newcommand\Union{\bigcup}
\newcommand\Pone{{\mathbb P}^1}
\newcommand\CP{{\mathbb C \mathbb P}}
\newcommand\reals{{\mathbb R}}
\newcommand\complexes{{\mathbb C}}
\newcommand\integers{{\mathbb Z}}
\newcommand\rationals{{\mathbb Q}}
\newcommand\naturals{{\mathbb N}}
\newcommand\nulset{\emptyset}
\theoremstyle{plain}
\renewenvironment{quotation}
{\list{}{
    \setlength\itemindent{0em}%
    \setlength\leftmargin{1.5em}
    \setlength\rightmargin{1.5em}
  }%
\item[]}
{\endlist}
\newcommand\dfn{\bf} 
\newcommand\Gm{{\mathbb G}_m}
\newcommand\kk{{\mathbb F}}
\newcommand\Sym{\mathrm{Sym}}
\newcommand\junk[1]{}
\newcommand\PP{{\mathbb P}}
\renewcommand\AA{{\mathbb A}}
\begin{document}
\pagestyle{plain}

\title{A compactly supported formula for equivariant localization \\
  \lowercase{and} \\
  simplicial complexes of Bia\l ynicki-Birula decompositions}

\author{Allen Knutson}
\thanks{Supported by an NSF grant.}
\email{allenk@math.ucsd.edu}
\dedicatory{Dedicated to Sir Michael Atiyah and to Isadore Singer
  on their 80th and 85th birthdays}
\date{January 2008}

\maketitle

\begin{abstract}
  The Duistermaat-Heckman formula for their induced measure on a
  moment polytope is nowadays seen as the Fourier transform of the
  Atiyah-Bott localization formula, applied to the $T$-equivariant
  Liouville class.  From this formula one does not see directly that
  the measure is positive, nor that it vanishes outside the moment polytope.
  
  In [Knutson99] we gave a formula for the Duistermaat-Heckman measure
  whose terms are all positive and compactly supported, using a
  Morse decomposition. Its derivation required that the stable and
  unstable Morse strata intersect transversely.
  
  In this paper, we remove this very restrictive condition, at the
  cost of working with an ``iterated'' Morse (or Bia\l ynicki-Birula)
  decomposition. This leads in a natural way to a simplicial complex
  of ``closure chains'', which in the toric variety case is just a
  pulling triangulation of the moment polytope.
  To handle the singularities of the closed strata we restrict to the
  projective algebraic setting.  Conversely, this allows us to work
  from the beginning with singular projective schemes over
  algebraically closed ground fields.
\end{abstract}

{\small\small
\tableofcontents}

\numberwithin{Lemma}{section}
\numberwithin{Proposition}{section}
\numberwithin{Corollary}{section}

\section{Background, and statement of results}

Let $X \subseteq \PP V$ be a projective algebraic variety over
an algebraically closed field, invariant under the linear action 
of a torus $T$ on $V$. Then (as in \cite{Brion})
there is an associated {\dfn Duistermaat-Heckman measure} $DH(X,T)$
on the dual $\lie{t}^*$ of the Lie algebra,
the weak limit as $n\to\infty$ of the Dirac measures
$$ \sum_{\mu \in T^* \subseteq \lie{t}^*} 
\frac{\dim\left( \text{$\mu$-weight space in }\Gamma(X; \calO(n)) \right)}
{n^{\dim X - \dim T}}
\ \delta_{\mu/n}. $$
As shown in \cite{Brion},
this measure $DH(X,T)$ is supported on the convex hull of the weights
of $T$ acting on the lines $\calO(1)|_{x \in X^T}$ over the fixed points,
and is a piecewise-polynomial times Lebesgue measure on that polytope,
called the {\dfn moment polytope}. 
It is a pleasant way to encode the asymptotics of the $T$-representation
$\Gamma(X; \calO(n)), n\to\infty$.

In the simplest case, $T=1$, this gives a Dirac measure times the
leading coefficient $\deg X / (\dim X)!$ of the Hilbert polynomial.
More generally, the value of this function at a interior integral point $p$
of this polytope is the leading coefficient of the Hilbert polynomial 
of the geometric invariant theory quotient $X //_p T$, 
with the linearization on $\calO(1)$ twisted by the character $-p$.\footnote{%
  The quotient $X //_p T$ may only carry a $1$-dimensional sheaf, rather
  than a line bundle, but this does not affect the definitions in
  any appreciable way.}
(One can also extend this definition to rational $p$, and we will
state a more general result of this type in proposition \ref{prop:D-Hgeneral}.)

If $T' \to T$ is a homomorphism (e.g. the inclusion of a subtorus),
then there is a natural map $\lie{t}^* \to \lie{t}'^*$ taking 
$DH(X,T)$ to $DH(X,T')$. 
For example, the $T'=1$ case lets one compute the degree
using the total mass of the Duistermaat-Heckman measure.

The polytope and measure are named for their origins
in the case that the base field is $\complexes$ \cite{D-H}.
If one chooses a Hermitian metric on $V$ invariant under the compact
subgroup $T_\reals$ of $T$, then there is a {\em moment map} 
$\Phi_T : X \to \lie{t}^*$ whose image is exactly the moment polytope,
and $DH(X,T)$ is the pushforward along $\Phi_T$
of the Liouville measure on the (smooth part of the) variety $X$.
One property of this map $\Phi_T$ is that for $f\in X^T$, 
the value $\Phi_T(f)\in T^*$ 
is the $T$-weight on the line $\calO(1)|_f$; as such we will use
$\Phi_T(f)$ to denote this weight even when the base field is not $\complexes$
(though $\Phi_T(x)$ will not be defined for $x\notin X^T$).

Hereafter we assume the fixed point set $X^T$ is isolated. 
Under this assumption Duistermaat and Heckman gave a formula for their
measure as an alternating sum over $X^T$ (this version is from
\cite[proposition 3.3 and its preceding theorem]{GLS}):

\newcommand\sign{\text{sign}}

\begin{Theorem*}\cite{D-H,GLS}
  Let $X$ 
  be a compact symplectic manifold of dimension $2n$ 
  with symplectic form $\omega$ and Liouville measure $[\omega^n]$,
  and $T$-moment map $\Phi_T : X \to \lie{t}^*$. 

  For each fixed point $f\in X^T$, let
  $\lambda^f_1,\ldots,\lambda^f_n$ be the weights of $T$ acting
  on the tangent space $T_f X$.
  Pick $\vec v \in \lie{t}$ such that 
  $\langle \vec v, \lambda^f_i\rangle \neq 0$ for all $f\in X^T, i=1,\ldots n$.
  (In particular each $\lambda^f_i \neq 0$, which is the condition
  that $X^T$ is isolated.)
  
  Then the measure $DH(X,T) := (\Phi_T)_*(\frac{[\omega^n]}{n!})$ on 
  the moment polytope $\Phi_T(X)$ equals the sum
  $$ \sum_{f \in X^T} 
  \sign \left(\prod_{i=1}^n \langle \vec v, \lambda_f^i\rangle\right) (C_f)_*
  \left( \text{Lebesgue measure on the orthant }\reals_{\geq 0}^n \right) $$
  where $C_f : \reals^n \to \lie{t}^*$ is the affine-linear map
  $$ C_f : \reals^n \to \lie{t}^*, \qquad 
  (r_1,\ldots,r_n) \mapsto \Phi_T(f) + \sum_{i=1}^n 
  r_i\ \sign\left(\langle \vec v, \lambda_f^i\rangle\right) \lambda_f^i $$
  which is proper when restricted to $\reals_{\geq 0}^n$. 
\end{Theorem*}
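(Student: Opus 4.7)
My plan is to compare Fourier–Laplace transforms. Both sides are tempered measures on $\lie{t}^*$: the left side is compactly supported on the moment polytope, while each summand on the right is a positive locally-finite measure of polynomial growth. Indeed, the hypothesis $\langle \vec v, \lambda^f_i\rangle \neq 0$ places every generator $\sigma^f_i \lambda^f_i$, with $\sigma^f_i := \sign\langle \vec v, \lambda^f_i\rangle$, in the open half-space $\{\mu : \langle \vec v, \mu\rangle > 0\}$; hence $C_f$ is proper on $\reals_{\geq 0}^n$, with image a pointed affine cone based at $\Phi_T(f)$. Each $(C_f)_*(\text{Lebesgue on }\reals_{\geq 0}^n)$ is therefore a tempered measure, and it suffices to match the Fourier–Laplace transforms of the two sides on some open subset of $\lie{t}$.

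For the left side, form the equivariantly closed class $\widetilde\omega := \omega + \Phi_T(\vec\xi) \in H^*_T(X)$; only its top-degree piece integrates nontrivially, so
$$ \int_X e^{\widetilde\omega} \;=\; \int_X e^{\Phi_T(\vec\xi)}\,\frac{\omega^n}{n!} \;=\; \int_{\lie{t}^*} e^{\langle\mu,\vec\xi\rangle}\, dDH(\mu). $$
Atiyah–Bott–Berline–Vergne localization rewrites the left-hand integral as
$$ \sum_{f\in X^T} \frac{e^{\langle \Phi_T(f),\vec\xi\rangle}}{\prod_i \langle \lambda^f_i, \vec\xi\rangle}, $$
an identity of meromorphic functions on $\lie{t}_\complexes$ whose sum the left side shows is in fact entire.

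For the right side, Fubini gives
$$ \int_{\reals_{\geq 0}^n} e^{\langle C_f(\vec r), \vec\xi\rangle}\, d\vec r \;=\; e^{\langle \Phi_T(f),\vec\xi\rangle} \prod_{i=1}^n \int_0^\infty e^{r_i \sigma^f_i \langle \lambda^f_i, \vec\xi\rangle}\, dr_i. $$
Each one-dimensional integral converges precisely when $\sigma^f_i\langle \lambda^f_i, \vec\xi\rangle < 0$, a condition met for all $i,f$ simultaneously on an open conical neighborhood $U$ of $-\vec v$ in $\lie{t}$; there each factor equals $-1/(\sigma^f_i\langle \lambda^f_i, \vec\xi\rangle)$. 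Multiplying by the theorem's sign factor $\prod_i\sigma^f_i = \sign\prod_i \langle \vec v, \lambda^f_i\rangle$ and summing over $f$ reproduces the ABBV expression above (the global $(-1)^n$ being absorbed into the usual sign convention for the equivariant Euler class, or equivalently the moment map). Injectivity of the Fourier transform on tempered distributions then upgrades the pointwise identity on $U$ to the claimed equality of measures.

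The main obstacle is not any single calculation but the sign and convergence bookkeeping in the last step. Each ABBV summand is a meromorphic function with poles along the hyperplanes $\langle \lambda^f_i, \cdot\rangle = 0$, and realising it as the Laplace transform of a pushforward of Lebesgue measure forces a choice of side at each pole. The generic $\vec v$ makes this choice uniform, and the sign $\sign\prod_i\langle\vec v,\lambda^f_i\rangle$ is exactly what is needed for the one-sided expansions at distinct fixed points to recombine into the common entire function on $U$. Keeping these signs consistent across all $f$ — and verifying that the individual convergence half-spaces share an open cone around $-\vec v$ — is the delicate part of the argument, and is also what motivates the iterated Bia\l ynicki-Birula refinement pursued in the remainder of the paper.
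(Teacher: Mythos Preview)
Your argument is correct and follows precisely the route the paper sketches: derive the formula from Atiyah--Bott/Berline--Vergne localization applied to $e^{\widetilde\omega}$, then Fourier-transform term by term, using the generic $\vec v$ to pick a common half-space in which every one-dimensional Laplace integral converges and to flip the appropriate weights. The paper does not give a full proof of this cited background result, but the derivation it outlines (``turning $\exp(-\Phi_T(f))$ into $\delta_{\Phi_T(f)}$, and the division by $\lambda_f^i$ into integration in the $\lambda_f^i$ direction\ldots\ requires the choice of $\vec v$\ldots\ and flipping of those weights for which $\langle \vec v,\lambda_f^i\rangle < 0$'') is exactly what you have written out in detail.
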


In particular each term is supported on a noncompact polyhedral cone,
and much cancelation occurs to produce a compactly supported answer.

However, note that once one has computed $DH(X,T)$, one can use the restriction
$1 \into T$ to compute the symplectic volume $\int_X e^\omega$ of $X$.
There are two interesting subtleties in this restriction.
One is that we can't pass from $T$ to the trivial group and then
apply the theorem, because we lose the ``$X^T$ isolated'' condition.
The other is that the total mass of $DH(X,T)$
can't be computed term-by-term, since the mass of each term is infinite.
(This latter problem can be fixed rather crudely by cutting $\lie{t^*}$ 
with a half-space chosen to contain $\Phi_T(X)$, or even just to contain 
the point at which one wishes to evaluate $DH(X,T)$.)

We mention that one can see from the above formula (or more directly)
that if the kernel of $T$'s action on $X$ is finite, i.e. at some
(hence every) fixed point $f$ the $\{\lambda_f^i\}$ rationally span
$\lie{t}^*$, then $DH(X,T)$ is Lebesgue measure times a 
piecewise-polynomial function, called the {\dfn Duistermaat-Heckman function}.

\newcommand\tomega{\widetilde\omega}
In their very influential paper \cite{AB}, 
Atiyah and Bott (at the same time as Berline and Vergne in \cite{BV})
gave a formula for the integration of equivariant cohomology classes
on a compact $T$-manifold, and showed that the Duistermaat-Heckman formula 
is the special case of integrating the exponential of
the {\em equivariant} symplectic form $\tomega := \omega - \Phi_T$.

\begin{Theorem*}\cite{AB,BV}
  Let $X$ be a compact oriented manifold, and $\alpha \in H^*_T(X)$,
  where $T$ acts on $X$ with isolated fixed points $X^T$, and as above
  let $\lambda^f_1,\ldots,\lambda^f_n$ be the weights of $T$ acting
  on the tangent space $T_f X$ for each fixed point $f\in X^T$.

  Then the pushforward of $\alpha$ along the map $X\to pt$, denoted
  $\int_X \alpha \in H^*_T(pt) \iso \Sym(T^*)$, can be computed as
  $$ \int_X \alpha 
  = \sum_{f\in X^T} \frac{\alpha|_f}{\prod_{i=1}^n \lambda_f^i} $$
  where the right-hand side formally lives in the ring of fractions
  of the polynomial ring $\Sym(T^*)$. 
  Here $\alpha|_f \in H^*_T(f) \iso H^*_T(pt)$ denotes
  the pullback of $\alpha$ along the $T$-equivariant inclusion 
  $\{f\} \into X$.
\end{Theorem*}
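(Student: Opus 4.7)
The plan is to deduce the formula from the Borel--Atiyah--Bott localization theorem, which asserts that the restriction map $i^* : H^*_T(X) \to H^*_T(X^T)$ becomes an isomorphism after tensoring with the fraction field $K$ of $R := \Sym(T^*) \iso H^*_T(pt)$. Granting this, I would build the inverse explicitly and push forward to a point. Concretely, for each $f \in X^T$ the equivariant inclusion $j_f : \{f\} \into X$ has a Gysin pushforward $(j_f)_* : H^*_T(f) \to H^*_T(X)$, and the composite $j_f^* (j_f)_*$ is multiplication by the equivariant Euler class of the normal bundle $T_f X$, namely $e_f := \prod_{i=1}^n \lambda_f^i \in R$. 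The isolated-fixed-point hypothesis guarantees that each $\lambda_f^i$ is nonzero, hence $e_f$ is a nonzero element of the integral domain $R$ and is invertible in $K$.

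With the localization theorem in hand, I would argue that in $H^*_T(X) \tensor_R K$ every class $\alpha$ can be recovered from its restrictions as
\[
\alpha \;=\; \sum_{f\in X^T} (j_f)_* \left( \frac{\alpha|_f}{e_f} \right).
\]
Indeed, applying $j_g^*$ to the right-hand side yields $(\alpha|_g / e_g)\cdot e_g = \alpha|_g$ for $g=f$, and $0$ for $g \neq f$ (since $j_g^*(j_f)_* = 0$ when $f\neq g$, as the two inclusions are disjoint); so both sides have the same image under $i^*$, and hence are equal after localization by the isomorphism theorem. Now push forward along $\pi : X \to pt$. Functoriality of the Gysin map gives $\pi_* (j_f)_* = (\pi \circ j_f)_* = \mathrm{id}_{H^*_T(pt)}$, so
\[
\int_X \alpha \;=\; \pi_* \alpha \;=\; \sum_{f\in X^T} \frac{\alpha|_f}{e_f} \;=\; \sum_{f\in X^T} \frac{\alpha|_f}{\prod_{i=1}^n \lambda_f^i},
\]
which is the desired identity in $K$. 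Because $\pi_*\alpha$ lies in $R$ a priori, the formally fractional right-hand side always collapses into $R$, a nontrivial consistency check.

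The main obstacle, and the heart of the matter, is the localization theorem itself: that $i^*$ is a $K$-linear isomorphism. I would prove this by showing that the kernel and cokernel of $i^*$ are $R$-modules annihilated by some nonzero element of $R$, so they vanish after inverting everything. The standard route is Mayer--Vietoris together with induction on an equivariant CW or stratification of $X$: for each orbit-type stratum $S$ with stabilizer $T_S \subsetneq T$, any character of $T$ vanishing on $T_S$ annihilates $H^*_T(S)$, so this cohomology is killed by a nonzero element of $R$; gluing strata via long exact sequences then shows that $H^*_T(X \setminus X^T)$ is torsion, whence $i^*$ is an isomorphism over $K$. This step is where the compactness of $X$ and the structure of $R$ enter essentially; everything else is formal manipulation of Gysin maps.
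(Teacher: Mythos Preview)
Your argument is the standard one and is correct: invoke the Borel--Atiyah--Bott localization isomorphism, reconstruct $\alpha$ over the fraction field as a sum of Gysin pushforwards from the fixed points, and then push forward to a point. The self-intersection formula $j_f^*(j_f)_* = e_f$ and the vanishing $j_g^*(j_f)_* = 0$ for $g\neq f$ are exactly the ingredients needed, and your sketch of why $H^*_T(X\setminus X^T)$ is $R$-torsion is the usual one.

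There is, however, nothing to compare against: the paper does not prove this theorem. It is stated in the introductory section as background, with attribution to \cite{AB,BV}, and is used only as motivation (via Fourier transform of $\int_X \exp(\widetilde\omega)$) for the Duistermaat--Heckman formula. The paper's own contributions begin with the iterated Bia\l ynicki-Birula decomposition and the resulting compactly-supported formula, and those arguments take place in equivariant Chow theory on affine cones rather than in $H^*_T$ of a compact manifold. So your proposal is a faithful reconstruction of the classical proof, but the paper simply quotes the result rather than proving it.
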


By definition, the equivariant cohomology ring $H^*_T(X)$ is the
direct sum of the groups $H^i_T(X)$. But the AB/BV formula obviously
extends to elements of the direct product $\prod_i H^i_T(X)$,
such as $\exp(\tomega)$. Then
$$ \int_X \exp(\tomega) = \sum_{x\in X^T} 
\frac{\exp\left(-\Phi_T(x)\right)}{\prod_{i=1}^n \lambda_x^i}. 
$$
It is very tempting to Fourier transform term-by-term, turning 
$\exp(-\Phi_T(f))$ into $\delta_{\Phi_T(f)}$, and the division by $\lambda_f^i$
into integration in the $\lambda_f^i$ direction. Making proper sense of this
(fixing the constant of integration, one might say)
requires the choice of $\vec v$ from the Duistermaat-Heckman theorem, and 
flipping of those weights for which $\langle \vec v,\lambda_f^i \rangle < 0$.
That done, the Duistermaat-Heckman theorem (in the \cite{GLS}
form above) follows.
\subsection{The basic formula}

Hereafter we work in the algebro-geometric setting, largely to 
avoid questions relating to singularities of certain subsets of $X$; 
our localization theorem will thus
be for equivariant Chow classes (see e.g. \cite{eqvtChow}). 
On the plus side, we will not require any smoothness assumption on $X$ itself;
hereafter, throughout the paper, $X$ will always denote a projective scheme
(except for a brief discussion in section \ref{ssec:myDHgen}).
If the components of $X$ are of varying dimension, $\dim X$ means 
the maximum thereof. 

Our main result is a different formula for the Duistermaat-Heckman measure,
in which all the terms are themselves positive,
and perforce compactly supported. 
Rather than maps of the orthant to $\lie{t}^*$, 
the terms will be based on maps of the {\dfn standard $n$-simplex}
$\{ \vec v \in \reals_{\geq 0}^n : \sum_i v_i \leq 1 \}$ to $\lie{t}^*$.

Fix a one-parameter subgroup\footnote{%
  While this $S$, or rather its associated coweight,
  bears superficial similarity to the vector $\vec v \in \lie{t}$ needed in
  the Duistermaat-Heckman theorem, its usage will be substantially different.}
$S: \Gm \to T$ such that $X^S = X^T$. 
Then the {\dfn Bia\l ynicki-Birula stratum} \cite{BB},
hereafter {\dfn B-B stratum}, $X_f$ is defined as the locally closed subset
$$ X_f := \{x \in X : \lim_{z\to 0} S(z)\cdot x = f \} $$
(considered with the reduced scheme structure, i.e., as a set).
It is easy to see that $X = \coprod_{f\in X^T} X_f$; this is called
the {\dfn B-B decomposition} of $X$ (or more precisely, the pair $(X,S)$), 
and is the algebraic analogue of a Morse decomposition. 

Unfortunately, this is usually not a stratification: 
the closure $\overline{X_f}$ is usually not a union of strata $\{X_g\}$
(one example to be given in section \ref{ssec:TVs}).
Consequently, the combinatorics of the finite set $X^T$ is much
richer than just a partially ordered set 
(though it is\footnote{%
  This statement is quite nonobvious, actually, as it uses
  projectivity in a crucial way: otherwise one can glue two $\Pone$s
  together, each one carrying the standard action of $\Gm$,
  but each one's $\vec 0$ glued to the other one's $\vec\infty$.
  Even smooth counterexamples have been constructed \cite{Ju}.
  In the smooth projective case, this statement appears in \cite{BB},
  and more generally can be proven with the technique of 
  lemma \ref{lem:hesselink}, though we will never use it directly.}
that, by taking the
transitive closure of ``$g\geq f$ if $g\in \overline{X_f}$'').
Define $\overline{X_{f_0,\ldots,f_k}}$ inductively by
$$ \overline{X_\nulset} := X, \qquad
\overline{X_{f_0,\ldots,f_k}} := \overline{
\overline{X_{f_0,\ldots,f_{k-1}}} \cap X_{f_k}}. $$
Call a nonrepeating sequence $\gamma = (f_0,\ldots,f_{k-1})$ 
a {\dfn closure chain} if $\overline{X_{f_0,\ldots,f_k}}$ is nonempty, 
or equivalently, if $\overline{X_{f_0,\ldots,f_k}} \ni f_k$.
Obviously this implies $f_0 < f_1 < \ldots < f_k$, hence one can 
think of $\gamma$ as just a set,
with the partial order on $X^T$ ``remembering'' the order on $\gamma$.

It is easy to see that the set of closure chains forms a simplicial complex
$\Delta(X,S)$ (meaning, any subset of a closure chain is itself one). Note that
$\overline{X_{f_0,\ldots,f_{k-1}}} \cap X_{f_k}$ is itself a B-B stratum, 
namely $\left(\overline{X_{f_0,\ldots,f_{k-1}}}\right)_{f_k}$
in $\overline{X_{f_0,\ldots,f_{k-1}}}$, and hence connected when nonempty.
We christen the set of all these subsets
$\left\{ \left(\overline{X_{f_0,\ldots,f_{k-1}}}\right)_{f_k} \right\}$
the {\dfn iterated B-B filtration of $(X,S)$}.
Our most nontrivial result (proposition \ref{prop:equidim})
about the complex $\Delta(X,S)$ is that it is equidimensional when $X$ is.

At this point, we can give a weak statement of our version of the
Duistermaat-Heckman formula. It will be in terms of the simplicial
complex $\Delta(X,S)$, which does not depend on the projective
embedding, and some coefficients $\{v_\gamma \in \naturals\}$ that do. 
We defer a precise definition of these coefficients until theorem
\ref{thm:vrecurrence}, and until then this is a sort of existence result.

\begin{Theorem}\label{thm:myDH}
  Let $X \subseteq \PP V$ be a subscheme invariant under the
  linear action of a torus $T$ on the vector space $V$.
  Assume that the fixed point set $X^T$ is finite, and 
  let $S : \Gm \to T$ be a one-parameter subgroup with $X^S = X^T$,
  with which to define the complex $\Delta(X,S)$ of closure chains.

  The longest a closure chain $\gamma$ may be is $1 + \dim X$ elements.
  (If $X$ is equidimensional, then every maximal closure chain is
  indeed this long.)
  To each such closure chain $\gamma$, and depending on the projective
  embedding, there is associated a positive integer $v_\gamma$, 
  such that the Duistermaat-Heckman measure of $X$ can be calculated as
  $$ DH(X,T) = \sum_\gamma v_\gamma\ (C_\gamma)_*
  \left( \text{Lebesgue measure on the standard $n$-simplex} \right) $$
  where $C_\gamma$ is the unique affine-linear map $\reals^{n} \to \lie{t}^*$
  taking the vertices of the standard simplex to 
  $\{ \Phi_T(f) : f \in \gamma\}$.

  In particular, to determine the value at a point $p$, we need only
  sum over those $\gamma$ such that $p$ lies in the convex hull of
  $\{ \Phi_T(f) : f \in \gamma\}$.
\end{Theorem}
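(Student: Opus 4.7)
The plan is to prove the theorem by induction on $n = \dim X$, organized around an iterated Bia\l ynicki-Birula ``pulling-triangulation from the source'' recursion. Both sides of the claimed identity are additive in top-dimensional irreducible components of $X$ (lower-dimensional components contribute no mass to $DH(X,T)$), so I may assume $X$ is irreducible. The base case of minimal $n$ is direct: the closure-chain sum collapses to a single simplex of the correct dimension, with the constant $v_\gamma$ chosen to match the leading-coefficient degree contribution of $X$.

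For the inductive step, let $f_0$ be the unique source fixed point of the irreducible $X$, so that $X_{f_0}$ is Zariski open and $\overline{X_{f_0}} = X$. A short combinatorial observation --- using that $\overline{X_{f_0}} = X$ yields the identity $\overline{X_{f_0,g_1,\ldots,g_k}} = \overline{X_{g_1,\ldots,g_k}}$ --- shows that every maximal closure chain of $X$ begins with $f_0$ (otherwise prepending $f_0$ would yield a strictly longer closure chain). Writing $\gamma = (f_0, g_1, \ldots, g_n)$, the tail $(g_1, \ldots, g_n)$ is a closure chain of $\overline{X_{g_1}}$ of length $n$, and maximality together with Proposition \ref{prop:equidim} applied to $\overline{X_{g_1}}$ forces $\dim \overline{X_{g_1}} = n-1$. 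Thus maximal closure chains of $X$ correspond bijectively to pairs $(g_1, \gamma')$ with $\overline{X_{g_1}}$ a codimension-one closed B-B stratum and $\gamma'$ a maximal closure chain of $\overline{X_{g_1}}$. The induction then closes via the cone identity
\[
DH(X,T) \;=\; \sum_{g_1 \,:\, \dim \overline{X_{g_1}} = n-1} m_{g_1} \cdot \mathrm{Cone}_{\Phi_T(f_0)}\!\bigl( DH(\overline{X_{g_1}}, T) \bigr)
\]
for suitable positive integers $m_{g_1}$, where $\mathrm{Cone}_P$ sends Lebesgue measure on an $(n-1)$-simplex $\sigma$ to Lebesgue measure on the join $P * \sigma$ and distributes over sums: applying the inductive hypothesis to each $DH(\overline{X_{g_1}}, T)$ yields the theorem with $v_\gamma := m_{g_1} \cdot v_{\gamma'}$, a positive integer.

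The principal obstacle is the cone identity itself. My approach is to exploit that $DH(X,T) = DH(X_{f_0}, T)$ (the complement $X \setminus X_{f_0}$ has strictly smaller dimension and is therefore negligible for the pushforward measure) and that $X_{f_0}$ is an affine $\Gm$-variety that contracts to $f_0$ under $S$. The projectivization $(X_{f_0} \setminus \{f_0\})/\Gm$ is a projective scheme which maps properly to the boundary divisor $X \setminus X_{f_0} = \bigcup_{g \neq f_0} \overline{X_g}$, and its generic degree over each codimension-one component $\overline{X_{g_1}}$ should provide the multiplicity $m_{g_1}$. Geometrically, integrating the Liouville form along each $\Gm$-orbit in $X_{f_0}$ sweeps out a ray emanating from $\Phi_T(f_0)$, and this is precisely the action of $\mathrm{Cone}_{\Phi_T(f_0)}$ on the pushforward measure from the quotient. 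Making this rigorous --- most plausibly via equivariant Chow classes and an explicit cone-bundle decomposition of $[X]$ in terms of the $[\overline{X_g}]$ --- is where the real technical work lies, and will simultaneously pin down the recursive definition of $v_\gamma$ later codified in Theorem \ref{thm:vrecurrence}.
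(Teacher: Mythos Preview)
Your inductive architecture is right and matches the paper's: reduce to $X$ irreducible, let $f_0=\min(X)$, slice by a $T$-invariant hyperplane $\PP H$ missing $f_0$, and recurse on the codimension-one piece. The difficulty is in your cone identity, which is indexed by \emph{fixed points} $g_1$ with $\dim\overline{X_{g_1}}=n-1$ and carries a single multiplicity $m_{g_1}$. This fails when $\overline{X_{g_1}}$ is reducible (e.g.\ the tilted-octahedron toric example in \S\ref{ssec:TVs}): the several irreducible components of $\overline{X_{g_1}}$ can appear in $\PP H\cap X$ with \emph{different} multiplicities, so there is no single $m_{g_1}$ for which $m_{g_1}\,DH(\overline{X_{g_1}},T)$ gives the correct contribution. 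Relatedly, your proposed map from $(X_{f_0}\setminus\{f_0\})/\Gm$ to $X\setminus X_{f_0}$ (sending $x$ to $\lim_{z\to\infty}S(z)\cdot x$) is not a morphism in general, so extracting multiplicities that way is delicate.

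The paper handles both issues at once by moving to equivariant Chow classes of the affine cone $\hatX\subseteq V$. There the ``cone identity'' is the one-line fact (proposition \ref{prop:Chowfacts}) that $\bigl(D+\Phi_T(f_0)\bigr)[\hatX]=[H\cap\hatX]$, and one simply applies induction to the \emph{scheme} $\PP H\cap X$, whose top-dimensional primary components (with their individual multiplicities) are exactly what is needed. To keep track of which component one is in at each inductive step, the paper introduces \emph{witnesses} $\bargamma=\bigl((f_0\in Y_0),\ldots,(f_n\in Y_n)\bigr)$, with $Y_i$ an irreducible component of $\overline{(Y_{i-1})_{f_i}}$; the recursion is then $v_{\bargamma}=\prod_i m_{\bargamma,i}$ on witnesses, and $v_\gamma=\sum_{\bargamma}v_{\bargamma}$. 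Only at the very end does one Fourier transform (proposition \ref{prop:Aclass}), which turns division by $D+\Phi_T(f_0)$ into exactly your $\mathrm{Cone}_{\Phi_T(f_0)}$. So your outline becomes correct once you (i) replace ``$\overline{X_{g_1}}$ with multiplicity $m_{g_1}$'' by ``irreducible components $Y_1$ of $\PP H\cap X$ with their scheme multiplicities,'' and (ii) run the recursion on the Chow side rather than directly on measures.
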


Note that the Duistermaat-Heckman measure is not
sensitive to components of lower dimension (geometric or embedded),
so one may freely replace $X$ by the union of its primary components
of top dimension. It matters little because though this replacement may 
shrink $\Delta(X,S)$, it doesn't change the set of faces $\gamma$ summed over
(as follows from corollary \ref{cor:deltaunion} and 
proposition \ref{prop:equidim}).

As we explained after the Duistermaat-Heckman theorem, it is very
tricky to turn their formula into one for the symplectic volume 
(or in the algebraic situation, the degree).
Whereas here, since the individual terms have finite volume, 
we {\em can} forget the $T$-action term by term and obtain the formula
$\deg(X) = \sum_\gamma v_\gamma$.
We refine this sum in section \ref{ssec:assembling}.


\newcommand\codim{\mathop {\rm codim}}

\subsection{Examples of $\Delta(X,S)$}

\subsubsection{Flag manifolds}

In the case that the B-B decomposition is a stratification,
then each nonempty $\left(\overline{X_{f_0,\ldots,f_{k-1}}}\right)_{f_k}$
is just $X_{f_k}$, and any chain $f_0 < \ldots < f_k$ in the 
partial order is a closure chain. So the complex $\Delta(X,S)$ 
of closure chains is just the ``order complex'' of this poset $X^T$. 
Under a slightly stronger assumption, the theorem \ref{thm:myDH} here
is an algebro-geometric version of theorem 1 in our earlier paper 
\cite{LittDH}, 
proven there for symplectic manifolds with no algebraicity condition.

Our inspiration for that formula was the case $X = G/P$ 
a generalized flag manifold, where the B-B decomposition is the 
Bruhat decomposition \cite{Akyildiz} and the partial order is the Bruhat order.
In this case the order complex is homeomorphic to a ball \cite{BjWa}.

Each space $\Gamma(G/P; \calO(n))$ is an irreducible representation of $G$, 
so the exact formula for the $T$-equivariant Hilbert function (not just its 
asymptotics) is given by the Kostant multiplicity formula or the Littelmann 
path formula, one case of which was the Lakshmibai-Seshadri conjecture.
As explained in \cite{LittDH}, the asymptotics of the Kostant and
Lakshmibai-Seshadri formulae reproduce respectively the Duistermaat-Heckman 
theorem (this special case being Heckman's thesis) or theorem \ref{thm:myDH}.

All the same analysis goes over to Schubert varieties inside flag manifolds,
not just the flag manifolds themselves. The resulting formula for degrees of
Schubert varieties is closely related to the one in \cite{PS},
and more distantly to the one in \cite{Duan}.

\subsubsection{Toric varieties}\label{ssec:TVs}

Let $X$ be the complex toric variety associated to an integral polytope 
$P \subseteq \lie{t}^*$. Each of the subsets $\overline{X_{f_0,\ldots,f_k}},
\left(\overline{X_{f_0,\ldots,f_{k-1}}}\right)_{f_k}$
in the iterated B-B filtration
of $X$ maps under the moment map $\Phi_T$ onto a corresponding subset 
$\overline{P_{f_0,\ldots,f_k}},
\left(\overline{P_{f_0,\ldots,f_{k-1}}}\right)_{f_k}$ of $P$,
which will be easier to visualize.

The choice $S: \Gm \into T$ defines an ``up'' direction on $P$; 
the condition $X^T = X^S$ says that each edge (hence each face) 
has a top vertex and a bottom vertex. 
Then $P_f$ (resp. $\overline{P_f}$) 
is the union of those open faces (resp. closed faces) 
of $P$ whose bottom vertex is $f$.
If $P$ is a simple polytope, meaning that there are 
only $\dim P$ edges from each vertex (equivalently, $X$ has at
worst orbifold singularities),
then $P_f$ contains only one maximal face, but this is not always true:
consider $P$ an octahedron almost balanced on one corner,
tilted over a little. Then the lowest
of the four points on the equator has $\overline{P_f} = $ two triangles.

In this case $\Delta(X,S)$ is a well-known triangulation of $P$ 
(a ``pulling triangulation'' by pulling the vertices starting from the bottom).
More precisely,
the maps $C_\gamma$ from the standard simplex to $\lie{t}^*$ are embeddings,
and their images in $\lie{t}^*$ exactly cover $P$.

We illustrate in the case $P$ a truncated right triangle, so $X$ a
Hirzebruch surface $F_1$, where one can already see the B-B decomposition
fail to be a stratification \cite[example 1]{BB}. 
Pictured left-to-right are $P$, its B-B decomposition, and $\Delta(X_P,S)$.

\centerline{\epsfig{file=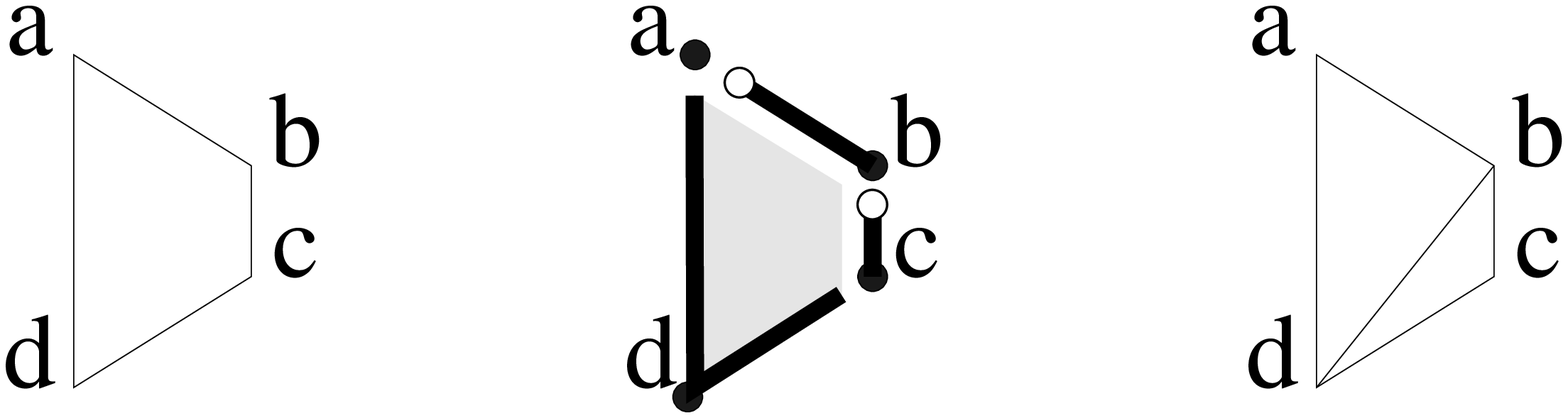,width=3in}}

\noindent 
The closure of $X_c$ is not a union of strata (it intersects but
doesn't contain $X_b$)
and even though $b \in \overline{X_c}, a \in \overline{X_b}$, 
we don't have $a \in \overline{X_c}$ nor a triangle in $\Delta(X_P,S)$ 
containing $\{a,b,c\}$.
The coefficient on $v_\gamma$ is the volume of the convex hull of 
$\Phi_T(\gamma) \subseteq P$, 
which partly motivated the choice of the letter $v$.

\subsubsection{Stanley-Reisner schemes}
In both the flag manifold and toric variety examples, the simplicial
complexes $\Delta(X,S)$ were very special: they were homeomorphic to balls.
In this section we show that in the general case, {\em any}
simplicial complex may arise.
The following examples do not provide interesting applications of
theorem \ref{thm:myDH}, but are good for testing one's intuition
about closure chains.

Let $\Delta$ be an arbitrary simplicial complex on the vertex set $1,\ldots,n$,
and let $V = \AA^n$. To each face $F$ in $\Delta$, we associate the 
coordinate projective subspace $X(F) \subseteq \PP V$ 
that uses only those coordinates.
Then let $X(\Delta) = \cup_{F\in\Delta} X(F)$ be the union
of those coordinate projective subspaces. 
This $X(\Delta)$ is the {\dfn (projective) Stanley-Reisner scheme} of $\Delta$,
whose coordinate ring is the (homogeneous) Stanley-Reisner ring of $\Delta$.
It is invariant under the torus $T$ that scales each coordinate 
independently.

Let $\Phi_S : \{1,\ldots,n\} \into \integers$ be strictly increasing.
(Really the important condition is injectivity, but by permuting
$1,\ldots,n$ we can obtain this convenient stronger condition.)
Then there is a corresponding action of $\Gm$ on $\PP V$, by
$$ S(z) \cdot [x_1,\ldots,x_n] 
:= \left[ z^{\Phi_S(1)} x_1, 
\ldots, z^{\Phi_S(i)} x_i, \ldots, z^{\Phi_S(n)} x_n\right] $$
which fixes $X(\Delta)$. The condition that $\Phi_S$ is injective
says that the only $S$-fixed points on $\PP V$ are the coordinate points
(actually it is enough that $\Phi_S(f) \neq \Phi_S(g)$ for each 
edge $\{f,g\} \in \Delta$).

\begin{Proposition}\label{prop:SRcase}
  Let $\Delta,X(\Delta),\Phi_S,S$ be as above.
  Then the associated simplicial complex of closure chains is just $\Delta$.
  In particular, every finite simplicial complex arises in this way.
\end{Proposition}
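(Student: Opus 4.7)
\medskip
\noindent\textbf{Proof plan.}
My approach is to compute, by induction on the chain length, each iterated closure $\overline{X_{i_0,\ldots,i_k}}$ explicitly as the Stanley--Reisner scheme of an easily described subcomplex of $\Delta$, and then read off the closure chains.

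First I would handle the base case. Since $\Phi_S$ is strictly increasing, the $S$-fixed points of $\PP V$ are precisely the coordinate points, so $X(\Delta)^T = X(\Delta)^S$ is the set of $e_i$ with $\{i\}\in\Delta$. For any $[x_1,\ldots,x_n]\in\PP V$ the limit $\lim_{z\to 0}S(z)\cdot[x]$ is the coordinate point $e_j$ with $j=\min\{i:x_i\neq 0\}$, so
$$X(\Delta)_{e_i} \;=\; \{[x]\in X(\Delta) : x_i\neq 0 \text{ and } x_j=0 \text{ for } j<i\}.$$
For each $G\in\Delta$ with $\min G=i$ this stratum meets $X(G)$ in the open dense subset $\{x_i\neq 0\}$, and meets no other $X(G')$. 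Since $X(\Delta)$ is the finite union $\bigcup_G X(G)$ of closed subschemes of $\PP V$, one reads off
$$\overline{X(\Delta)_{e_i}} \;=\; X\bigl(\Delta^{(i)}\bigr), \qquad \Delta^{(i)}:=\{H\in\Delta : H\cup\{i\}\in\Delta,\ H\subseteq\{i,i+1,\ldots,n\}\}.$$

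Next I would prove, by induction on $k$, the claim that for any strictly increasing sequence $i_0<\cdots<i_k$ of vertices of $\Delta$,
$$\overline{X_{i_0,\ldots,i_k}} \;=\; X\bigl(\Delta^{(i_0,\ldots,i_k)}\bigr), \qquad \Delta^{(i_0,\ldots,i_k)}:=\{H\in\Delta : H\cup\{i_0,\ldots,i_k\}\in\Delta,\ H\subseteq\{i_k,\ldots,n\}\}.$$
The step from $k-1$ to $k$ reruns the base computation on the Stanley--Reisner scheme $\overline{X_{i_0,\ldots,i_{k-1}}}=X(\Delta^{(i_0,\ldots,i_{k-1})})$ with its restricted $S$-action; intersecting the two defining conditions produces exactly the displayed subcomplex. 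Since $\Delta^{(i_0,\ldots,i_k)}$ is nonempty iff $\{i_0,\ldots,i_k\}\in\Delta$, and nonrepeating closure chains depend only on their underlying set (their order is forced by $\Phi_S$), we conclude $\Delta(X(\Delta),S)=\Delta$.

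The main obstacle I anticipate is the base-case closure, specifically showing that $\overline{X(\Delta)_{e_i}}$ is no larger than $X(\Delta^{(i)})$. The key observation is that any sequence in $X(\Delta)_{e_i}$ converging to a point $[x]$ can, after passing to a subsequence, be assumed to lie in a single component $X(G)$ with $\min G=i$; this forces $[x]\in X(G)$, hence $\text{supp}(x)\cup\{i\}\subseteq G\in\Delta$, placing $[x]$ in $X(\Delta^{(i)})$ as required.
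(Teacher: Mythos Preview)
Your proposal is correct and follows essentially the same inductive strategy as the paper: compute the iterated B-B closures explicitly as unions of coordinate subspaces, then read off that nonemptiness is equivalent to $\{i_0,\ldots,i_k\}\in\Delta$. Your description $\overline{X_{i_0,\ldots,i_k}}=X(\Delta^{(i_0,\ldots,i_k)})$ is in fact a cleaner and more accurate formulation than the paper's stated formula $\bigcup\{X(F):F=\{f_0,\ldots,f_k,\text{larger}\}\}$, which as written keeps the coordinates $f_0,\ldots,f_{k-1}$ and is literally too large; the two agree once one strips those coordinates, and your version is the one that actually holds.

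One small caveat: your ``main obstacle'' paragraph argues the upper bound on $\overline{X(\Delta)_{e_i}}$ via convergent sequences and passing to a subsequence in a single component. That reasoning is fine over $\complexes$ with the analytic topology but does not literally apply in the Zariski topology over a general algebraically closed field, where sequences do not detect closure. The fix is immediate and you have already done the work: you have shown $X(\Delta)_{e_i}\subseteq\bigcup_{\min G=i}X(G)$, and the right-hand side is closed (a finite union of projective subspaces), so the Zariski closure of the left-hand side is automatically contained in it. Replace the sequence argument with that one sentence and the proof goes through in the generality the paper claims.
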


\begin{proof}
  Identify the fixed points $\big\{ [0,\ldots,0,1,0,\ldots,0] \big\}$ 
  with $1,\ldots,n$. Then it is easy to show that
  $$  X(\Delta)_i 
  = \overline{X(\Delta)_i} \cap \big\{ [x_1,\ldots,x_n] : x_i \neq 0 \big\}
  \qquad\text{where}\qquad
  \overline{X(\Delta)_i} = \Union_{F \in \Delta,\ \min(F) = i} X(F). $$
  (This would be more irritating to state without having
  first made $\Phi_S$ strictly increasing.)  
  From this, one can show inductively that
  $$ \overline{X(\Delta)_{f_0,\ldots,f_k}} = \Union 
  \big\{ X(F)\ :\ F \in \Delta,\ 
  F = \{f_0,\ldots,f_k,\text{ larger numbers}\} \big\}$$
  so the left side is nonempty iff $\{f_0,\ldots,f_k\}$ is the initial
  string of a face of $\Delta$, i.e. iff it is a face of $\Delta$.
\end{proof}

When theorem \ref{thm:myDH} is applied to $X(\Delta)$, each 
mysterious coefficient $v_F$ is just $1$, 
the degree of the projective variety $X(F)$.

\subsubsection{Some tricky behavior}\label{sssec:tricky}
We first mention a geometric subtlety of the definition of closure chain.
Plainly $\overline{X_{f_0,\ldots,f_k}}$ is contained in 
$\overline{X_{f_0,\ldots,f_{k-1}}} \cap \overline{X_{f_k}}$, 
since it is defined as the closure of
$\overline{X_{f_0,\ldots,f_{k-1}}} \cap X_{f_k}$.
But it can be strictly smaller, as we will show by example in a moment. 
One can show that if $\overline{X_{f_0,\ldots,f_k}}$ had instead been 
{\em defined} as $\overline{X_{f_0,\ldots,f_{k-1}}} \cap \overline{X_{f_k}}$,
then the (similarly larger) complex of closure chains would be a
``clique complex'', meaning, the largest simplicial complex with a
given set of $1$-faces.  For example, order complexes of posets are
clique complexes, where the $1$-faces $\{a,b\}$ specify comparability
of $a$ and $b$.

The smallest simplicial complex that isn't a clique complex is a
hollow triangle (the clique complex would be the solid triangle). 
The corresponding Stanley-Reisner scheme
is $X = \{[a,b,c] : abc = 0\} \subseteq \PP^2$.
Taking $S(z)\cdot [a,b,c] := [a,zb,z^2c]$, the B-B strata are
$$ X_{[1,0,0]} = \{[1,b,0]\} \cup \{[1,0,c]\},\quad
X_{[0,1,0]} = \{[0,1,c]\},\quad
X_{[0,0,1]} = \{[0,0,1]\} $$
and the complex of closure chains is the desired hollow triangle.
In this example, we see the claimed geometric subtlety
at $\overline{X_{[1,0,0],[0,1,0]}} = \{[0,1,0]\}$,
contrasted with the strictly larger
$\overline{X_{[1,0,0]}} \cap \overline{X_{[0,1,0]}} = \{[0,1,0],[0,0,1]\}$.

Another interesting (for other reasons) example in the plane is
$X = \{[a,b,c] : b(ac-b^2) = 0\}$, invariant under the same $S$ as above,
and we take $T=S$.
This has two $S$-fixed points $\{[1,0,0],[0,0,1]\}$,
and the complex of closure chains is an interval. 
(The unique $v_\gamma$ turns out to be $3$, the degree of $X$.)
Whereas the geometry of $X$ -- a line union a conic, meeting transversely at 
the two $S$-fixed points -- would seem to suggest that the more
appropriate complex would be an oval, made with two intervals glued
together at both ends 
(their $v_\gamma$s being $1$ for the line and $2$ for the conic).

However, that is not a simplicial {\em complex} (in which faces 
are determined by their set of vertices), but falls under the
slightly more general notion of simplicial {\em poset}. 
We did not need this richer notion to formulate theorem \ref{thm:myDH},
but we will use it in the $K$-theory version \cite{AKfuture},
based on ideas from \cite{balanced}.

Having just described a simplicial complex (the clique complex above) 
that is slightly larger than we need, the reader may wonder whether
the complex $\Delta(X,S)$ might still be larger than necessary.
One sign that the complex is a good one is that the coefficients $v_\gamma$
in theorem \ref{thm:myDH}
are strictly positive, so no term may be omitted. Another is that in
the toric variety case discussed in the previous section,
the supports of the terms are disjoint, so the $v_\gamma$ can't even
be adjusted to leave some term out.
Another indication of $\Delta(X,S)$'s minimality will come in
proposition \ref{prop:equidim}.

\subsubsection{A Bott-Samelson manifold}\label{sssec:BS}

\newcommand\Cthree{{\complexes^3}}
\newcommand\Gr{{\rm Gr}}
{
Bott-Samelson manifolds provide examples of $\Delta(X,S)$ that are
{\em not} homeomorphic to balls, despite $X$ being irreducible. 
Consider the variety 
$X \subset \Gr_1(\Cthree)\times \Gr_2(\Cthree)\times \Gr_1(\Cthree)$
of triples of subspaces, with incidences specified by the Hasse diagram
$ X = \Bigg\{ (V_1,V_2,V_1') : 
\begin{array}{ccccccc}
\langle e_1, e_2 \rangle
& & & & V_2 & & \\
 | & \backslash & & / & & \backslash & \\
\langle e_1 \rangle
& & V_1 & & & &  V_1'
\end{array}
\Bigg\} $
where $\{e_1,e_2,e_3\}$ denote the standard basis of $\complexes^3$.

This is the ``$1$-$2$-$1$ Bott-Samelson manifold''; 
one way to think of it is as a walk from the base flag 
$(\langle e_1 \rangle, \langle e_1,e_2 \rangle)$
to other flags, by changing the line, then the plane, then the line again.
It carries an
action of the diagonal matrices $T$ inside $GL_3(\complexes)$ (indeed, of
the upper triangulars). There are $2^3 = 8$ $T$-fixed points, in which
$V_1,V_2,V_1'$ are coordinate subspaces. They are indexed by subsets
of the word $121$, where a letter is included if the corresponding 
subspace is different from the previous choice.

The Bott-Samelson is a blowup of the flag manifold, via the map
$(V_1,V_2,V_1') \mapsto (0 < V_1' < V_2 < \complexes^3)$. The exceptional
locus will turn out to be $\overline{X_{1--}}$. As such, $X$'s moment 
polytope is a subpolytope of that of the flag manifold, and we draw
it below:

\centerline{\epsfig{file=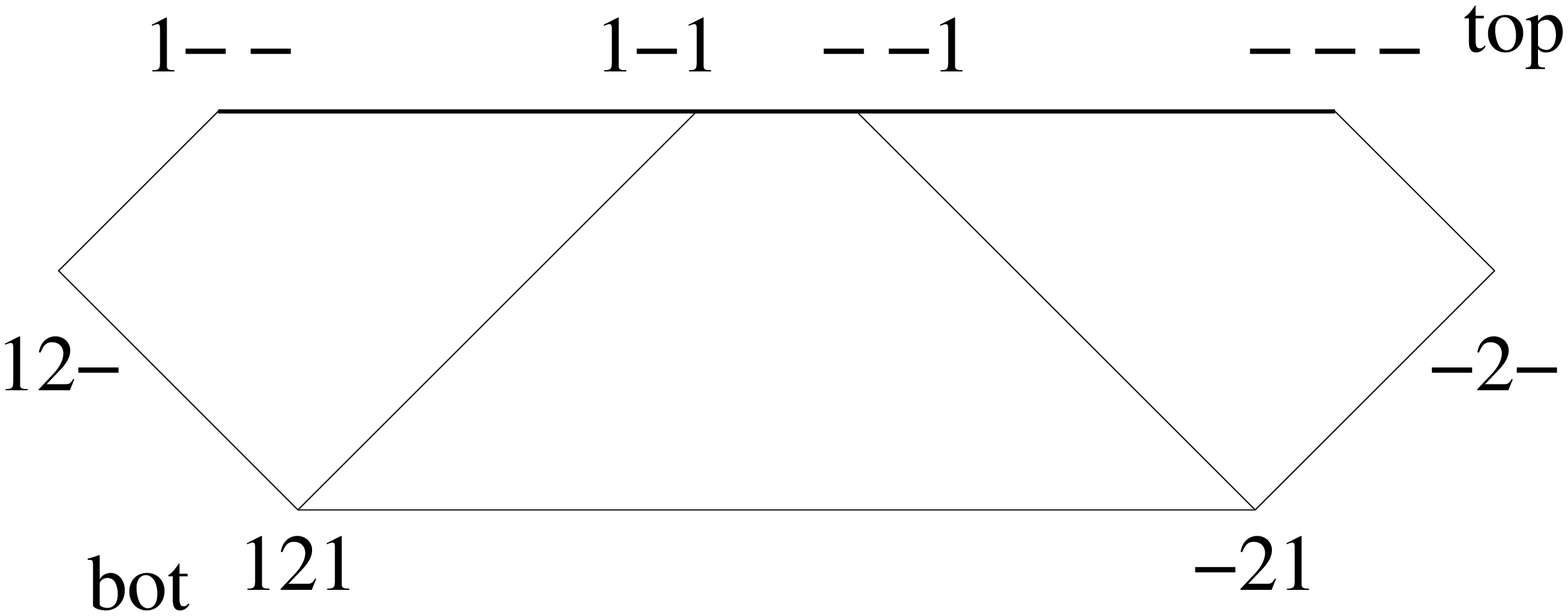, height=1.5in}}

Our one-parameter subgroup 
$S : \complexes^\times \to T$ will be $S(z) = \diag(1,z,z^2)$.
Then the top (most repellent) point is 
$(\langle e_1 \rangle, \langle e_1,e_2 \rangle, \langle e_1 \rangle)$,
and the bottom is
$(\langle e_2 \rangle, \langle e_2,e_3 \rangle, \langle e_3 \rangle)$.

The closures of the B-B strata are easy to compute:
$$ \overline{X_{121}} = X $$
$$ \overline{X_{-21}} = \{ V_1 = \langle e_1 \rangle \}, \quad
 \overline{X_{12-}} = \{ V_1 = V_1' \}, \quad
 \overline{X_{-2-}} = \{ V_1 = V_1' = \langle e_1 \rangle \} $$
$$ \overline{X_{1--}} = \{ V_2 = \langle e_1,e_2 \rangle \}, \quad
 \overline{X_{--1}} = \overline{X_{1--}} \cap \overline{X_{-21}},  \quad
 \overline{X_{1-1}} = \overline{X_{1--}} \cap \{ V_1' = \langle e_1 \rangle
\}$$
$$ \overline{X_{---}} = \{ V_1 = V_1' = \langle e_1 \rangle,
        V_2 = \langle e_1,e_2 \rangle \} $$

\newcommand\barX{{\overline X}}

To check that each $\barX_{f}$ is as claimed, note that it is
$T$-invariant, and has the right local behavior at $f$
(an easy tangent space calculation on $X_f$, which
is smooth because $X$ is smooth). So far this guarantees that the B-B stratum
$X_f$ is open inside the purported $\barX_{f}$. But then note that
each $\barX_{f}$ is irreducible, hence is the closure of $X_f$.

Note that this is {\em not} a 
stratification, as $\overline{X_{12-}} \not\supset \overline{X_{1--}}$.
Rather, 
$$ \overline{X_{12-,1--}} = \{V_1 = V_1' ,
        V_2 = \langle e_1,e_2 \rangle \}. $$ 
This is the diagonal of $\overline{X_{1--}} \iso (\CP^1)^2$, whereas
$\overline{X_{--1}}, \overline{X_{1-1}}$ are its two axes.

It remains to compute $\Delta(X,S)$. 
Because $\overline{X_{121}} = X$, the point $121$ will be a ``cone point'',
meaning that $\Delta(X,S)$ is a cone from that point.
Put another way, it is uninteresting, so let's leave it out for now.

Since the top point $\ ---\ $ is in every stratum closure, it also will be
a cone point. (While $bot$ being a cone point occurs whenever $X$ is
irreducible, $top$ being a cone point is much more a surprise; $top$
is not a cone point for most toric varieties, such as the Hirzebruch
surface in section \ref{ssec:TVs}.)

The complex $\Delta(X,S)$ is then the double cone (from $121$ and $---$) 
on the $1$-complex depicted below:

\junk{
\newcommand\barrr[1]{\begin{picture}(1, 1)
    \put(0,0){\line(1,0){#1}}  \end{picture} }
$$\begin{array}{ccccc}
\ovalbox{-21} &  \barrr{.3in}
 \quad   \ovalbox{-2-} \quad  \barrr{.3in}
& \ovalbox{12-} & & \\
 | &  &  | & & \\
\ovalbox{--1}&  \barrr{1.2in}
& \ovalbox{1--} & \barrr{1.2in}
& \ovalbox{1-1}
\end{array}
$$
\junk{
$$\begin{array}{ccccc}
-21 & \longleftarrow\quad -2- \quad\longrightarrow & 12- & & \\
\uparrow & & \uparrow & & \\
--1 & \longrightarrow & 1-- & \longleftarrow & 1-1
\end{array}
$$
(The orientation on the edges indicates the order relation $X^S$.)
}}

\centerline{\epsfig{file=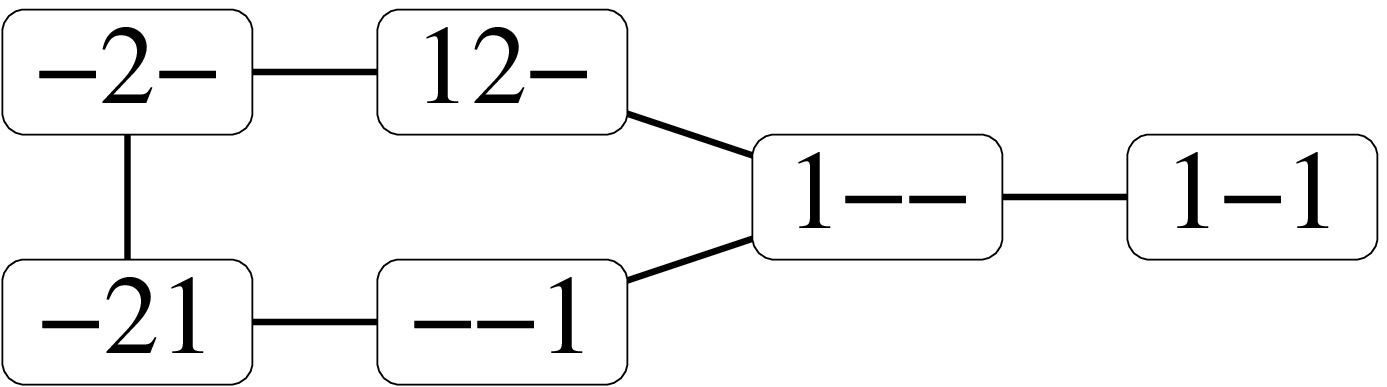,height=.8in}}

This $\Delta(X,S)$ is {\em not} homeomorphic to a ball, though
it is Cohen-Macaulay. We do not know how often this latter conclusion holds.

If the opposite B-B stratification is used ($z\to\infty$ rather than 
$z\to 0$, switching the roles of $bot$ and $top$), 
it turns out that there is only the one cone point $\,---$. 
This $\Delta(X,S^{-1})$, depicted below without its cone point, 
is again Cohen-Macaulay though not homeomorphic to a ball.

\centerline{\epsfig{file=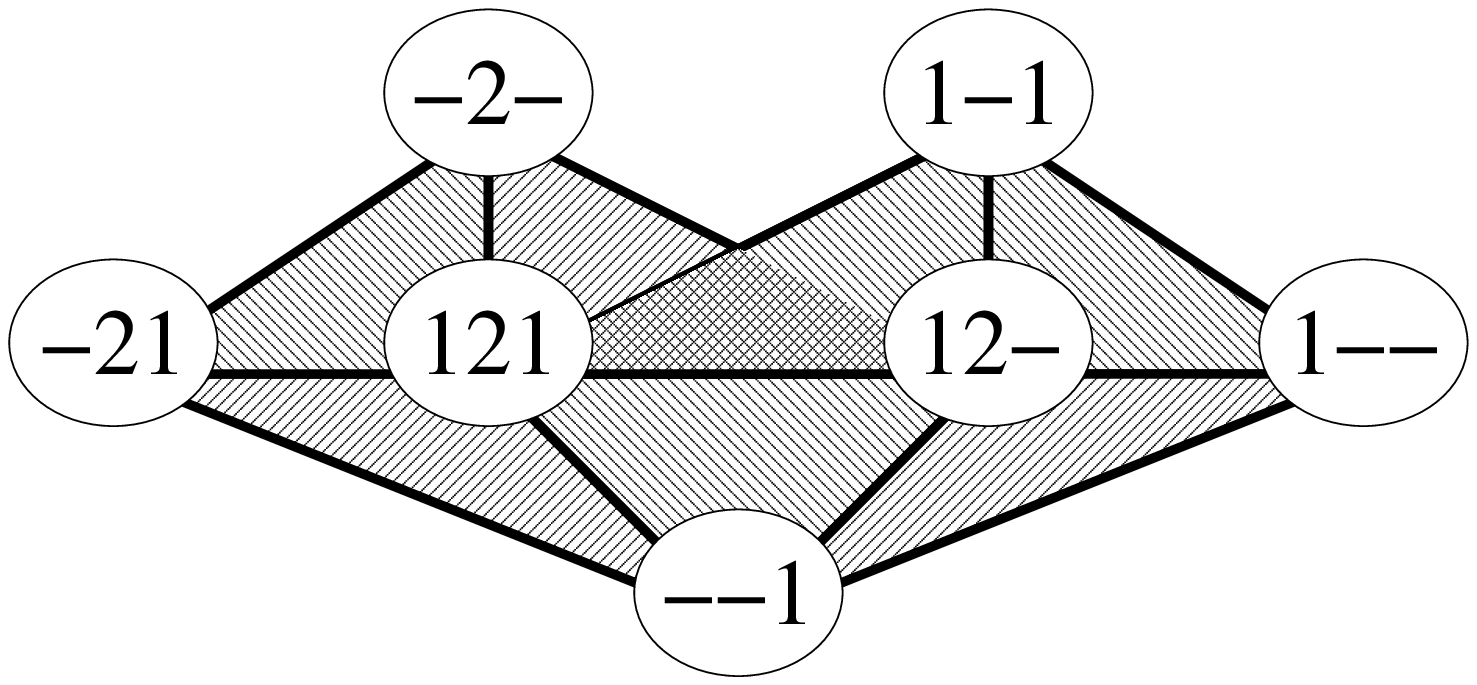,height=1.5in}}
}

\subsubsection{The punctual Hilbert scheme of four points in $\complexes^2$}
The Hilbert scheme of $n$ points in the complex plane is defined very 
concretely as the set of ideals in $\complexes[x,y]$ of codimension $n$.
It is, miraculously, smooth (Fogarty's theorem) and has received a lot
of attention recently, such as in our reference \cite{Ha}.

The subscheme in which the $n$ points all sit at the
origin is even more concrete: each ideal contains $(x,y)^n$, so can
be considered an ideal in 
$\complexes[x,y]/(x,y)^n$, and hence a point in the Grassmannian
$\Gr_{n\choose 2} (\complexes[x,y]/(x,y)^n)$. This subscheme turns out
to be irreducible (another miracle), though not smooth.

The $T$ acting is the diagonal matrices from $GL_2(\complexes)$, which
acts on the ring and hence on the set of ideals. The fixed points are
the ideals $I$ generated by monomials, and are indexed by partitions of
$n$ as follows: 
the set of pairs $\{(a,b) : x^a y^b \notin I\} \subset \naturals^2$
is automatically a partition.

We draw the $T$-moment polytope for this action on the punctual Hilbert
scheme of $4$ points at the origin of the plane, labeling each vertex
by its partition. We put edges to indicate the $T$-invariant $\Pone$s,
though we won't make direct use of them.

\centerline{\epsfig{file=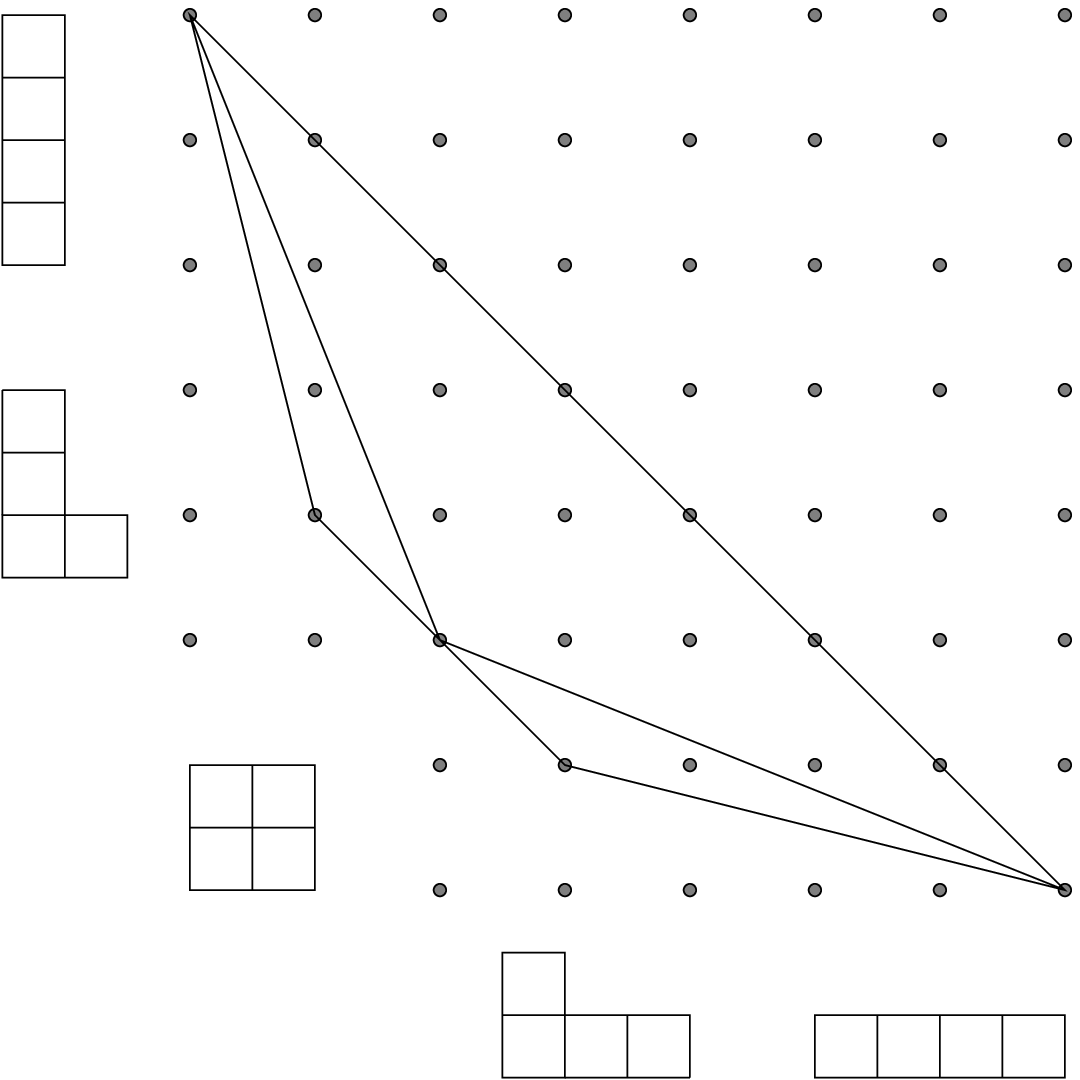,height=1.5in}}

From Northwest to Southeast, these vertices are the ideals $(y^4,x)$,
$(y^3,xy,x^2)$, $(y^2,x^2)$, $(y^2,xy,x^3)$, and $(y,x^4)$.

Let $S: \complexes^\times \to GL(2)$ be the one-parameter subgroup
$z \mapsto \diag(z,z^2)$, so $(y^4,x)$ is the top and $(y,x^4)$ the bottom.
We now describe the closures of the 
B-B strata on the Hilbert scheme:\footnote{The
obvious terminology for these is ``Gr\"obner basins''.} 
$$ \overline{X_{(y^4,x)}} = \{ (y^4,x) \} \qquad\qquad
   \overline{X_{(y^3,xy,x^2)}} = \{ (y^4, xy, x^2, Ax + By^3) \} $$
$$ \overline{X_{(x^2,y^2)}} = \{ (y^4, xy^2, x^2, Ax + By^2 + Cxy + Dy^3,
 Axy + By^3) \} $$
$$ \overline{X_{(y^2,xy,x^3)}} = \{ (y^3,y^2 x,y x^2,x^3, Bx^2 + Cxy + Dy^2,
  Ex^2 + Fxy + Gy^2)\} $$
$$ \overline{X_{(y,x^4)}} = X $$
where not both of $A,B$ are zero, and $(B,C,D),(E,F,G)$ are linearly
independent.
As these strata are not smooth, these claims are harder to check,
but we do not take space to do so here.
%

Since $X$ is irreducible, $bot = {(y,x^4)}$ is a cone point in
the complex $\Delta(X,S)$, 
but $top$ is not one as $top \notin \overline{X_{(y^2,xy,x^3)}}$.
The complex is pictured below, without the cone point:

\centerline{\epsfig{file=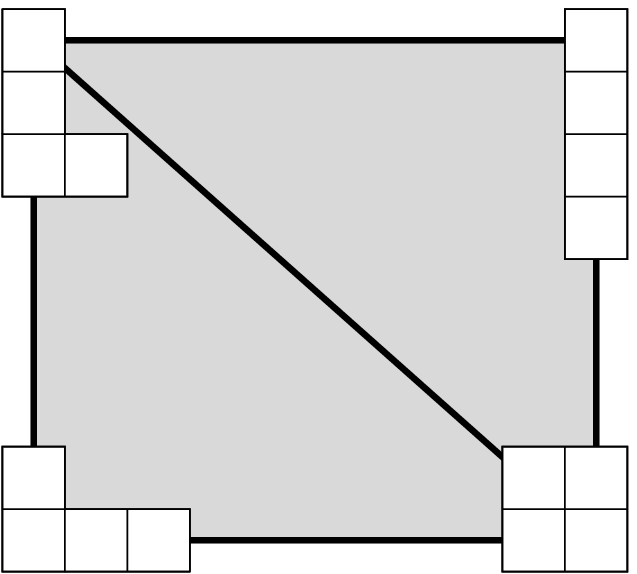,height=1in}}

\subsection{The coefficients $v_\gamma$}

We now give a recurrence on a family $\{ v(Z)_{(f_0,\ldots,f_k),Y} \}$
of natural numbers, where $Y$ is an irreducible component of
$\overline{Z_{f_0,\ldots,f_k}}$ of codimension $k$ in $Z$, in order to
give a quick definition of the $\{v_\gamma\}$.

If $X$ is irreducible, it has a unique open B-B stratum, and we denote
the fixed point in that stratum $\min(X)$. 
Lemma \ref{lem:hesselink} then implies the following: 
there exists a unique $T$-invariant hyperplane section of $X$ supported 
on $X \setminus X_{\min(X)}$. It is at this point that the projective
embedding of $X$ is finally felt: the {\em scheme} structure on this
hyperplane section gives multiplicities on the components 
of $X \setminus X_{\min(X)}$, and these multiplicities 
are building blocks in the definition of the $\{v_\gamma\}$.

\begin{Theorem}\label{thm:vrecurrence}
  Let $Z$ vary over the class of $T$-invariant subschemes of $\PP V$
  with $Z^T$ isolated, and $Y$ over the irreducible components
  of $\overline{Z_{f_0,\ldots,f_k}}$
  of codimension $k$ in $Z$. (There will only exist such $Y$ if
  $(f_0,\ldots,f_k)$ is a closure chain, and usually not even then.)
  
  Then there exists a unique assignment 
  $\left\{
    (Z,(f_0,\ldots,f_k),Y) \mapsto v(Z)_{(f_0,\ldots,f_k),Y} \in \naturals
    \right\}$
  satisfying the following conditions:
  \begin{enumerate} 
  \item For $0 \leq j \leq k$ (though we will only use $j=0,1$):
    $$v(Z)_{(f_0,\ldots,f_j,\ldots,f_k),Y} 
    = \sum_{Y_j \subseteq \overline{Z_{f_0,\ldots,f_j}},\ Y_j\supseteq Y}
    v(Z)_{(f_0,\ldots,f_j),Y_j} \ v(Y_j)_{(f_j,\ldots,f_k),Y}$$
    where the sum is over components $Y_j$ of 
    $\overline{Z_{f_0,\ldots,f_j}}$ of codimension $j$ in $Z$.
  \item $v(Z)_{(\min(Z)),Y}$ is the multiplicity of $Y$ as a component of $Z$.
  \item If $Z$ is reduced and irreducible, and $Y$ is a component
    of $Z \setminus Z_{\min(Z)}$, then $v(Z)_{(\min(Z),\min(Y)),Y}$
    is the multiplicity of $Y$ in the $T$-invariant hyperplane section of $Z$
    supported on $Z \setminus Z_{\min(Z)}$.
  \end{enumerate}
  The coefficients $v_\gamma$ from theorem \ref{thm:myDH} can be calculated 
  as $v_{\gamma=(f_0,\ldots,f_{\dim X})} = v(X)_{\gamma,\{f_{\dim X}\}}$.
\end{Theorem}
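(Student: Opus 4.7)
I would read conditions (2) and (3) as base cases for closure chains of length $1$ and $2$, and condition (1) with $j=1$ as the recursion that defines $v(Z)_{\gamma,Y}$ by induction on the length of $\gamma$. Existence and uniqueness then fall out of the same induction, and the real work is in verifying that the resulting function satisfies condition (1) for \emph{every} $j$, not only $j=1$. The final sentence identifying $v_\gamma$ with $v(X)_{\gamma,\{f_{\dim X}\}}$ I would read as \emph{defining} the coefficient that appears in theorem \ref{thm:myDH}; the compatibility with $DH(X,T)$ itself belongs to the proof of that theorem.

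\textbf{Construction.} For $\gamma=(f_0)$, condition (2) pins $v(Z)_{(f_0),Y}$ down as the length of $Z$ at the generic point of $Y$; this makes sense componentwise, since a codimension-$0$ component $Y$ of $\overline{Z_{f_0}}$ must be an irreducible component of $Z$ with $f_0=\min(Y)$. For $\gamma=(f_0,f_1)$, I first apply the $j=0$ instance of condition (1) to reduce to the case $Z$ irreducible, then use condition (2) to peel off the multiplicity of $Z_{\mathrm{red}}$ in $Z$, leaving the irreducible reduced case where condition (3) applies. For longer chains I define
$$v(Z)_{(f_0,\ldots,f_k),Y}\ :=\ \sum_{Y_1} v(Z)_{(f_0,f_1),Y_1}\ v(Y_1)_{(f_1,\ldots,f_k),Y},$$
the sum over codimension-$1$ components $Y_1$ of $\overline{Z_{f_0,f_1}}$ containing $Y$; each factor is available by the induction, since the second lives in the smaller scheme $Y_1$ and involves a chain that begins at $\min(Y_1)=f_1$. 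Uniqueness is forced on the nose by this procedure.

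\textbf{Main obstacle: condition (1) for all $j$.} The $j=1$ case is the definition and the $j=0$ case was built in during the $k=1$ step, so only $1 < j < k$ is at stake. I would argue by induction on $j$: apply the defining recursion to split off $(f_0,f_1)$, then apply the induction hypothesis inside $Y_1$ to the subchain $(f_1,\ldots,f_j,\ldots,f_k)$, and finally rearrange the resulting double sum. The combinatorial content of the rearrangement is that a codimension-$j$ component of $\overline{Z_{f_0,\ldots,f_j}}$ corresponds bijectively to a tower $Y_j\subseteq Y_1$, with $Y_1$ of codimension $1$ in $Z$ and $Y_j$ of codimension $j-1$ in $Y_1$; this is essentially the definition of the iterated B-B filtration combined with the observation (cited before the theorem) that each $(\overline{Z_{f_0,\ldots,f_{j-1}}})_{f_j}$ is itself a B-B stratum. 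Matching these towers across the two iterations of the $j=1$ recursion is what I expect to be delicate, because $\overline{Z_{f_0,\ldots,f_j}}$ can in principle be strictly smaller than $\overline{Z_{f_0,\ldots,f_{j-1}}}\cap \overline{Z_{f_j}}$ (as illustrated in \S\ref{sssec:tricky}); I would handle this by working inside $\overline{Z_{f_0,f_1}}$ throughout rather than inside $Z$, so that the ambient scheme shrinks at each step and the potential discrepancy between closure and intersection never arises.
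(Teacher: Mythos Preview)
Your plan is essentially the paper's, but organized in the opposite order, and the reorganization hides the one idea that makes the verification of (1) for general $j$ painless.

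The paper first proves \emph{uniqueness} by unwinding: apply the $j=0$ case once and then the $j=1$ case $k$ times to obtain the closed formula
\[
  v(Z)_{(f_0,\ldots,f_k),Y}
  \;=\;
  \sum_{\bargamma=(f_0\in Y_0,\ldots,f_k\in Y_k=Y)}
  m_{\bargamma,0}\prod_{i=1}^k m_{\bargamma,i},
\]
the sum running over \emph{witnesses} as in \S\ref{ssec:myBB}--\ref{ssec:proofs}. With this formula in hand, \emph{existence} of condition (1) for every $j$ is a one-liner: to sum over witnesses, first choose $Y_j$ and then independently choose the portions $(Y_0,\ldots,Y_j)$ and $(Y_j,\ldots,Y_k)$ of the tower. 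No induction on $j$ is needed.

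Your inductive verification of (1) would, if written out, rediscover exactly this witness structure: the double sum over pairs $(Y_1,Y_j)$ you want to rearrange is the sum over the first two rungs of a witness tower. The phrase ``corresponds bijectively to a tower $Y_j\subseteq Y_1$'' is not quite right---a fixed component $Y_j$ of $\overline{Z_{f_0,\ldots,f_j}}$ can lie under several $Y_1$---but the correct statement is that both sides of (1) unwind to the \emph{same} sum over full towers $Y_0\supseteq\cdots\supseteq Y_k$, which is the witness sum. Making that sum explicit from the start is what the paper does.

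One further point: the last sentence of the theorem is not a definition. The $v_\gamma$ were already defined in \S\ref{ssec:proofs} as $\sum_{\bargamma} v(X)_{\bargamma}$; the content here is that the unique solution of the recurrence \emph{agrees} with that earlier definition, and the witness formula is precisely what exhibits the match.
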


Unfortunately these multiplicities $v(Z)_{(\min(Z),\min(Y)),Y}$ can be
very difficult to compute in examples, particularly if $Z$ is singular
at $\min(Y)$. In section \ref{ssec:linrels} we prove some
linear relations on the $\{v_\gamma\}$ to help constrain them.

\subsection{Integrating more general classes}\label{ssec:myDHgen}

One of the advances of \cite{AB,BV} was to give a formula for integrating
more general classes than just $\exp(\widetilde\omega)$.
When $X$ is a symplectic manifold, 
one application of this equivariant integration is to perform 
{\em ordinary} integration on symplectic/GIT quotients $X //_p T$ of $X$.
Recall \cite{Kirwan} that for $p$ a regular value of the moment map $\Phi_T$, 
there is a surjective {\em Kirwan map} $\kappa : H^*_T(X) \onto H^*(X//_p T)$,
whose kernel can be computed by computing integrals on $X//_p T$.
That can be done as follows:

\begin{Proposition}\label{prop:D-Hgeneral}\cite{Guillemin}
  Assume the setup $X,\Phi_T$ of the Duistermaat-Heckman theorem,
  and let $p \in \Phi_T(X)$ be a regular value, 
  so the symplectic reduction $X //_p T$ is an orbifold with its own 
  symplectic form $\omega_p$. Let $\alpha \in H^*_T(X)$. 

  Then the Fourier transform of $\int_X \alpha \exp(\tomega)$
  is a measure supported on $\Phi_T(X)$, equal to Lebesgue measure 
  times a polynomial in a neighborhood of $p$,
  whose value at $p$ is $\int_{X //_p T} \kappa(\alpha) \exp(\omega_p)$.
  In particular, if $\deg(\alpha) = \dim (X //_p T)$
  then the Fourier transform is a piecewise constant function
  (times Lebesgue measure)
  whose value at $p$ is $\int_{X //_p T} \kappa(\alpha)$.
\end{Proposition}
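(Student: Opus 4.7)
The plan is to mimic the template the excerpt uses to derive the classical Duistermaat--Heckman formula from Atiyah--Bott/Berline--Vergne, but with the extra factor $\alpha$ carried along. Applying AB/BV to $\alpha \exp(\tomega)$ gives
\[ \int_X \alpha \exp(\tomega) = \sum_{f\in X^T} \frac{\alpha|_f\ \exp(-\Phi_T(f))}{\prod_{i=1}^n \lambda_f^i}, \]
an element of a suitable localized and completed $\Sym(T^*)$. Choose a generic $\vec v \in \lie{t}$ as in the DH theorem, and Fourier transform term-by-term along the lines indicated in the excerpt: $\exp(-\Phi_T(f))$ becomes $\delta_{\Phi_T(f)}$, each $1/\lambda_f^i$ (with the sign correction from $\vec v$) spreads it into Lebesgue measure along a ray, and the polynomial $\alpha|_f \in \Sym(T^*)$ acts on the Fourier side as a constant-coefficient differential operator. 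The outcome is a finite sum of derivatives of Lebesgue measures supported on shifted orthants, so globally it is Lebesgue measure times a piecewise polynomial, with the same cancelation as in the $\alpha=1$ case forcing support inside $\Phi_T(X)$.

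Now fix a regular value $p$ of $\Phi_T$. To identify the local polynomial at $p$, invoke the Marle--Guillemin--Sternberg normal form: for $U$ a sufficiently small neighborhood of $p$, the preimage $\Phi_T^{-1}(U)$ is $T$-equivariantly symplectomorphic to a neighborhood of the zero section of $T^*T \times (X //_p T)$, with $\Phi_T$ becoming projection to $\lie{t}^* \cong T^*T/T$, and with the symplectic form differing from the product form by a minimal-coupling term $\langle q - p, \Theta \rangle$ built from a chosen connection $\Theta$ on the principal $T$-bundle $\Phi_T^{-1}(p) \to X //_p T$ whose curvature represents the Chern class $c$. This reduces the computation to integration in stages on a very concrete local model.

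On this model, integrate the $T$-fiber of $\alpha \exp(\tomega)$ first. Fiber integration of an equivariant class along a principal $T$-bundle is exactly the Cartan-model description of the Kirwan map, so it converts $\alpha$ to $\kappa(\alpha)$ (up to the Haar volume of $T$). The $\exp(-\Phi_T)$ piece of $\tomega$ combines with the minimal-coupling term to yield $\exp(\omega_q)$ on the base, where $[\omega_q] = [\omega_p] + \langle q-p, c\rangle$ by the classical DH theorem. Hence on $U$ the Fourier transform equals Lebesgue measure times
\[ q\ \mapsto\ \int_{X //_p T} \kappa(\alpha)\ \exp(\omega_q), \]
which is manifestly polynomial in $q$ (the exponential truncates by dimension). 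Setting $q = p$ gives the stated value; and when $\deg \alpha = \dim(X //_p T)$, only the constant term of $\exp(\omega_p)$ pairs nontrivially with $\kappa(\alpha)$, so the polynomial is constant on $U$ and equals $\int_{X //_p T} \kappa(\alpha)$, recovering the last sentence of the proposition.

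The main obstacle is making the equivariant integration-in-stages rigorous: one must check that the Cartan-model horizontal projection of $\alpha|_{\Phi_T^{-1}(U)}$ really represents $\kappa(\alpha)$ (independent of the connection), and that the moment-map piece $-\Phi_T$ of $\tomega$ assembles with the chosen connection to produce exactly $\omega_q$ rather than $\omega_p$ on the reduced space. This is a standard Chern--Weil bookkeeping exercise, but it carries all the actual content of the proposition; the remaining steps are only the localization formula, the MGS normal form, and Fourier inversion.
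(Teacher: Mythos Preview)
The paper does not give its own proof of this proposition. It is stated with the attribution \cite{Guillemin}, and immediately afterward the paper remarks ``This is also used as \cite[theorem 3.2]{GM}.'' So there is nothing in the paper to compare your argument against; the author is simply quoting a known result.

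That said, your sketch is the standard route to this result: localize by AB/BV, Fourier transform term by term to get a piecewise-polynomial measure, then use the Marle--Guillemin--Sternberg local model near a regular value and integrate over the $T$-fiber to identify the local polynomial with $q \mapsto \int_{X//_p T} \kappa(\alpha)\exp(\omega_q)$. Your own caveat at the end is accurate: the only real work is the Cartan-model/Chern--Weil verification that fiber integration implements $\kappa$ and that the connection term assembles $\omega_q$ correctly. This is exactly the content of Guillemin's argument, so if you were to flesh this out you would essentially be reproducing the cited reference rather than anything the present paper does.
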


This is also used as \cite[theorem 3.2]{GM}.

The case $\alpha \in H^*_T(pt)$, studied in \cite{GScoeffs},
is already interesting, even though
$\int \alpha \exp(\widetilde\omega) = \alpha \int \exp(\widetilde\omega)$. 
In this case, to compute the Fourier transform, we can first compute
the D-H measure and then apply the differential operator 
$\widehat{\alpha\cdot}$ that is Fourier dual to multiplication by $\alpha$.
Since $DH(X,T)$ is a piecewise-polynomial times Lebesgue measure,
this distribution can be very complicated along the breaks;
it will thus be very convenient for us that the proposition above only
requires that we understand it at {\em generic $p$}.

We now give a formula for these distributions at generic $p$, in the same
terms as in theorem \ref{thm:myDH}, and afterward discuss the case
of general $\alpha \in H^*_T(X)$, or really $\alpha \in A^T_*(X)$.

\newcommand\ulv{{\underline v}}
Given a list $\ulv := (v_1,\ldots,v_n)$ of vectors in $V$
and a number $k\in \naturals$, 
define a {\dfn partial fractions schema} as an injection
$\sigma: \{1,\ldots,k\} \into \{1,\ldots,n\}$
such that
$(v_j : j \neq \sigma(1),\ldots,\sigma(k))$ spans $V$
(so in particular $k\leq n-\dim V$),
and that for each $i=1,\ldots,k$, one has that $\sigma(i)$ is in 
the lex-first basis in
$(v_j : j \neq \sigma(1),\ldots,\sigma(i-1))$.

Given also a subset $M\subseteq \ulv$ with $n-k$ elements,
define the {\dfn partial fractions $k$-tensor} $\tau_{\ulv,M}$ as the sum 
$$ \tau_{\ulv,M} 
= \sum_{\sigma \atop \sigma(\{1,\ldots,k\}) = \ulv \setminus M}
v^{\sigma(1)} \tensor \cdots \tensor v^{\sigma(k)} 
\quad \in (V^*)^{\tensor k} $$
over all partial fractions schemata $\sigma$ whose image is the
complement of $M$, where $v^{\sigma(i)} \in V^*$ denotes the dual basis
element to $v_{\sigma(i)}$ in that lex-first basis 
in $\{v_j : j \notin \sigma(1,\ldots,i-1)\}$.

Finally, given a pair $(\gamma' \subseteq \gamma)$ of closure chains,
with $\gamma' = (f_0,\ldots,f_k)$ and $|\gamma| = 1+\dim X$,
let $\tau_{\gamma',\gamma}$ 
denote the partial fractions $k$-tensor $\tau_{(v_i),M}$
where $V = T^* \oplus \integers D$, $v_i = D + \Phi_T(f_i)$, 
and $M = \gamma'$.

\begin{Theorem}\label{thm:myDHgen}
  Assume the setup of theorem \ref{thm:myDH},
  and associate the same positive integers $v_\gamma$ 
  to closure chains of length $1 + \dim X$.
  To each closure chain $\gamma'$ with $1+\dim X-k$ elements,
  we associate the tensor
  $$ v_{\gamma'} := \sum_{\gamma \supseteq \gamma'}
  v_\gamma\, \tau_{\gamma',\gamma},
  \qquad\qquad
  \gamma \in \Delta(X,S),\ |\gamma| = 1+\dim X $$
  where $\tau_{\gamma',\gamma}$ is the partial fractions tensor
  defined above.

  Let $(\alpha_1,\ldots,\alpha_k) \in T^* \iso A^1_T(pt)$
  be a list of $T$-weights, so $\prod_{i=1}^k \alpha_i \in A^k_T(pt)$
  is a homogeneous class of degree $k$.
  Then near any point of $p\in \lie{t}^*$ in general position,
  the Fourier transform of multiply-by-$\alpha$,
  applied to $DH(X,T)$,
  can be calculated as
  $$ \sum_{\gamma' \in \Delta(X,S)\atop |\gamma'|=1+\dim X - k} 
  \left( v_{\gamma'} 
    \cdot (\alpha_1 \tensor \cdots \tensor \alpha_k) \right) 
  (C_{\gamma'})_*
  \left( \text{Lebesgue measure on the standard $(n-k)$-simplex} \right) $$
  where $C_\gamma$ is the unique affine-linear map $\reals^{n-k} \to \lie{t}^*$
  taking the vertices of the standard simplex 
  to $\{ \Phi_T(f) : f \in \gamma'\}$. (``General position'' means here
  that $p$ does not lie in the convex hull of fewer than $1+\dim T$
  elements of $\Phi_T(X^T)$.)
  
  In particular, to determine the value at a point $p$ in general
  position, we need only sum over those $\gamma'$ such that $p$ lies
  in the convex hull of $\{ \Phi_T(f) : f \in \gamma'\}$.
\end{Theorem}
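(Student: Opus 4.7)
The plan is to start from the positive expression for $DH(X,T)$ provided by theorem \ref{thm:myDH} and apply the Fourier-dual operator $\widehat{\alpha\cdot}$ term by term. Because Fourier transform converts multiplication by the monomial $\prod\alpha_i$ into a composition of constant-coefficient differential operators, $\widehat{\alpha\cdot} = \widehat{\alpha_1\cdot}\circ\cdots\circ \widehat{\alpha_k\cdot}$ with each factor (up to convention) a directional derivative in the $\alpha_i$-direction, so the effect on a single summand $v_\gamma\,(C_\gamma)_*(\text{Lebesgue})$ can be computed pointwise near a generic $p\in\lie{t}^*$.

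The heart of the proof is thus a single local computation. The pushforward $(C_\gamma)_*\bigl(\text{Lebesgue on the standard $(\dim X)$-simplex}\bigr)$ is, as a distribution on $\lie{t}^*$, a piecewise polynomial of degree $\dim X - \dim T$ supported on the convex hull of $\Phi_T(\gamma)$. Near a point in general position this density admits an explicit Brion--Khovanskii--Pukhlikov representation as a signed sum, over vertices $f_i\in\gamma$, of rational functions in the directions $\{\Phi_T(f_j) - \Phi_T(f_i)\}_{j\neq i}$ restricted to tangent cones. Differentiating $k$ times and simplifying by partial fractions produces a piecewise polynomial of degree $\dim X - \dim T - k$, which is precisely the degree of a pushforward of Lebesgue from a $(1+\dim X - k)$-vertex simplex. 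The book-keeping for this simplification is exactly the schema $\sigma$ and tensor $\tau_{\gamma',\gamma}$; the auxiliary $D$-direction $v_i := D + \Phi_T(f_i)$ is a projective homogenization making the ``lex-first basis'' recipe unambiguous by lifting the configuration into general position. Summing over maximal $\gamma$ and regrouping by the face $\gamma'\subseteq\gamma$ on which the derivative localizes then yields the coefficient $v_{\gamma'} = \sum_{\gamma\supseteq\gamma'} v_\gamma\,\tau_{\gamma',\gamma}$ paired against $\alpha_1\tensor\cdots\tensor\alpha_k$ and multiplied by the pushforward of Lebesgue along $C_{\gamma'}$; the general-position hypothesis on $p$ is what allows the distributional contributions supported on lower-dimensional walls to be discarded.

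The main obstacle is the partial-fractions identity asserted in the previous paragraph, and I would attempt it two ways and keep whichever is cleaner. The direct route is induction on $k$: each derivative $\partial_{\alpha_i}$ of a simplex-pushforward rewrites via Stokes' theorem as a sum of boundary pushforwards, and one then matches the combinatorial data of $\sigma$ face by face. The slicker route is a Laplace-transform comparison: for $\xi\in\lie{t}$ in an appropriate chamber, the Laplace transform of $\widehat{\alpha\cdot}\,DH(X,T)$ equals $\alpha(\xi)\cdot\int_X e^{\tomega(\xi)}$, which Atiyah--Bott expresses as a specific rational function in $\xi$; the Laplace transform of the proposed formula is another rational function whose partial-fractions decomposition is by construction encoded by $\tau_{\gamma',\gamma}$. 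Equating the two then reduces to a rational-function identity, and I expect this second route to be shorter.
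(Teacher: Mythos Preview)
Your second route is essentially the paper's argument, but you have inserted an unnecessary detour. The paper does not start from theorem~\ref{thm:myDH} and then Laplace-transform back; it starts one step earlier, from theorem~\ref{thm:ChowDH}, which already expresses the equivariant multiplicity as the rational function
\[
[\hatX]/[\vec 0]=\sum_\gamma \frac{v_\gamma}{\prod_{f\in\gamma}\bigl(D+\Phi_T(f)\bigr)}
\quad\in\ \mathrm{Frac}\bigl(\Sym(T^*)[D]\bigr).
\]
One then simply multiplies by $\prod_{i=1}^k\alpha_i$ and performs iterated partial fractions in $\Sym(T^*)[D]$: at each step write $\alpha_i$ in the lex-first basis of the surviving denominator factors $\{D+\Phi_T(f)\}$ and cancel. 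The partial-fractions schemata $\sigma$ and tensors $\tau_{\gamma',\gamma}$ are \emph{defined} to be the bookkeeping of exactly this process, so there is nothing to verify beyond unwinding definitions. Terms whose remaining denominator fails to span $T^*\oplus\integers D$ are dropped because their Fourier transforms are supported on proper subspaces and hence miss a generic $p$; the Fourier transform is applied only at the very end, as in the proof of theorem~\ref{thm:myDH}. No appeal to Atiyah--Bott is needed, since theorem~\ref{thm:ChowDH} already supplies the rational function.

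Your first route (Stokes on simplex pushforwards, induction on $k$) is a genuine alternative and would likely work, but it forces you to track boundary contributions and then re-identify the resulting combinatorics with the $\tau_{\gamma',\gamma}$, which were after all defined on the rational-function side. The paper's approach avoids this by never leaving the rational-function world until the final Fourier step; this is both shorter and explains why the $\tau$'s look the way they do.
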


\newcommand\hatX{\widehat X}

The next step beyond $\alpha\in H^*_T(pt)$ is $\alpha$ 
of the form $\sum_i \alpha_i [X_i]$, 
where $\alpha_i \in H^*_T(pt)$ and each $X_i \subseteq X$ is 
a $T$-invariant oriented submanifold. Then
$$ \int_X \alpha \exp(\tomega) 
= \int_X \left( \sum_i \alpha_i [X_i] \right) \exp(\tomega)
= \sum_i \alpha_i \int_X  [X_i] \exp(\tomega)
= \sum_i \alpha_i \int_{X_i} \exp(\tomega). 
$$
The Chow setting that we work in for the rest of the paper is closer to
equivariant homology than cohomology, and has a very appealing feature
\cite[theorem 2.1]{eqvtChow}:
{\em every} class $\alpha \in A_*^T(X)$ is of the form 
$\sum_i \alpha_i \cap [X_i]$,
for $\{X_i\}$ some $T$-invariant subvarieties.
Since theorem \ref{thm:myDHgen} makes no smoothness assumption,
it can be applied to the $X_i$ individually. 

We admit here that the statement of theorem \ref{thm:myDHgen} is
probably too unwieldy to see much direct use. We included it mainly to 
emphasize that, thanks to \cite[theorem 2.1]{eqvtChow}, an
analogue of theorem \ref{thm:myDH} for general classes $\alpha \in A_*^T(X)$
follows in some sense automatically from the $\alpha=1$ case
already treated.

\section{Background on D-H measures and B-B decompositions}

In this section we assemble some results, well-known to the experts, 
on Duistermaat-Heckman measures and Bia\l ynicki-Birula decompositions,
making little claim to originality. The closest reference we could
find for the B-B results was \cite{Hesselink}.

\subsection{D-H measures and equivariant Chow theory}

We first recast the calculation of the D-H measure of $X$ in terms 
of the equivariant Chow class of the affine cone $\hatX$. 
This is desirable largely in that it lets us trade $X$'s multiple
fixed points for a single fixed point at the origin
(though even when $X$ is smooth, $\hatX$ won't be,
so we can't use arguments that depend on smoothness).
The base ring $A_T^*(pt) \iso \Sym(T^*) \iso H^*_T(pt)$ is the same,
and the intuition and results for Chow classes are well developed.
Our references for equivariant Chow theory are \cite{eqvtChow,eqvtHandA}.

{\em All} of our equivariant Chow classes will live on vector spaces.
When $Z\subseteq W$ for $W$ a vector space, we will write $[Z\subseteq W]$
for the corresponding class in $A^*_T(W) \iso \Sym(T^*)$. 
Usually $W$ will be our
ambient space $V$, and then we will denote $[Z\subseteq V]$ simply by $[Z]$.

The only facts we need about equivariant Chow classes are these
trivial generalizations from ordinary Chow theory:

\begin{Proposition}\label{prop:Chowfacts}
  Let a torus $U$ act on a vector space $V$, preserving a subscheme $Y$
  and a hyperplane $H = \{b=0\}$, 
  with $\lambda \in U^*$ the $U$-weight on the line $V/H$.
  For any $U$-invariant subscheme $Z\subseteq V$, 
  let $[Z] \in A^*_U(V)$ denote the associated equivariant Chow class.
  \begin{itemize}
  \item If $Y$ contains no components in $H$, i.e. if $b$ is not a
    zero divisor on $Y$, \\
    then $[H \cap Y] = [H][Y] = \lambda\, [Y]$. \\
    Conversely, 
    if $Y\subseteq H$ then $[Y] = [H] [Y \subseteq H]$,
    and $= [H] [Y\times L]$ where 
    and $L$ is a $U$-invariant complement in $V$ to $H$.
  \item If $\{Y_i\}$ are the top-dimensional components of $Y$,
    occurring with multiplicities $\{m_i \in \naturals\}$, then
    $[Y] = \sum_i m_i [Y_i]$.
  \item If there exists a closed subscheme $F \subseteq V\times S$
    whose projection to the connected base $S$ is flat, 
    and whose fibers are $U$-invariant subschemes
    two of whom are $Y$ and $Y'$, then $[Y] = [Y']$.
  \end{itemize}
\end{Proposition}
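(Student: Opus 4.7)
The plan is to reduce each of the three bullets to a standard statement in ordinary Chow theory via the Edidin--Graham construction. Fix an algebraic approximation $\mathcal{E}\to\mathcal{B}$ of the universal principal $U$-bundle whose complement in a $U$-representation has codimension exceeding every codimension in sight; then $A^*_U(W) = A^*(W\times^U\mathcal{E})$ for any $U$-scheme $W$, and equivariant classes correspond to ordinary Chow classes on the mixed spaces. Since $Y$, $H$, $L$, and the fibers of the flat family $F$ are all $U$-invariant, they descend to honest subschemes of the corresponding mixed spaces, and each morphism in sight (the inclusion $H\hookrightarrow V$, the projection $V\to H$ determined by $L$, and the projection $F\to S$) descends to a morphism of the same type (regular embedding, vector bundle projection, flat map). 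Hence it suffices to verify each claim on the mixed spaces, where each is standard.

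For the first bullet, $H$ is the vanishing locus of the $U$-equivariant section $b$ of the trivial $U$-line bundle on $V$ with fiber $V/H$, whose equivariant first Chern class is exactly the weight $\lambda \in \Sym(U^*) \iso A^*_U(\pt)$. When $b|_Y$ is a nonzerodivisor, $H\cap Y$ is a Cartier divisor on $Y$, and the standard formula $[H\cap Y] = c_1(V/H)\cap[Y]$ applied on the mixed space yields $[H\cap Y] = \lambda[Y]$. Conversely, if $Y\subseteq H$, then under the canonical identification of $A^*_U(V)$ and $A^*_U(H)$ with $A^*_U(\pt)$ (both $V$ and $H$ being equivariant affine bundles over $\pt$), push-forward along the codimension-one regular embedding $i:H\hookrightarrow V$ is multiplication by $[H\subseteq V] = \lambda$, so $[Y\subseteq V] = i_*[Y\subseteq H] = \lambda\,[Y\subseteq H]$; for the second expression, the $U$-equivariant projection $\pi:V\to H$ along the invariant complement $L$ is flat, hence $[Y\times L\subseteq V] = \pi^*[Y\subseteq H]$, which equals $[Y\subseteq H]$ after the identifications. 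The second bullet is the very definition of the Chow class of a subscheme, $[Y] = \sum_i m_i[Y_i]$ summed over top-dimensional components with $m_i$ the length of the local ring at the generic point of $Y_i$; this passes verbatim to $Y\times^U\mathcal{E}$ since the Borel mixing is smooth and the $U$-action is free.

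For the third bullet, flatness of $F\to S$ yields flatness of $F_\mathcal{E}\to S\times\mathcal{B}$ over the connected base $S\times\mathcal{B}$, and the standard result that fibers of a flat family over a connected base represent the same Chow class gives $[Y_\mathcal{E}] = [Y'_\mathcal{E}]$, whence $[Y] = [Y']$. There is no substantive obstacle to any of the three bullets: each is a mechanical transcription of a standard Chow identity through the Borel construction, and the only verification needed is that $U$-invariance survives passage to the mixed spaces, which it does because $U$ acts freely on $\mathcal{E}$. If anything were to require real care, it would be the self-intersection identification $i_* = \lambda\cdot$ used in the converse half of bullet one, but this is immediate once one notes that both $V$ and $H$ deformation-retract $U$-equivariantly to $\pt$ so that the cohomological identifications via flat pullback are genuine.
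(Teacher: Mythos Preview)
Your proposal is correct. The paper does not actually prove this proposition: it introduces it as a list of ``trivial generalizations from ordinary Chow theory'' and moves on, with only the remark afterward that the first bullet is easy for $Y$ reduced irreducible and that the second bullet reduces the general case to that one. Your Edidin--Graham reduction to the mixed spaces is exactly the mechanism the paper is tacitly invoking with that phrase, so there is nothing to compare --- you have simply written out what the paper leaves implicit.
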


The condition in the first is very easy to check when $Y$ is reduced
and irreducible, and the second lets one reduce to that case.
Joseph described this same recursion on his polynomials in \cite{Joseph}.

In fact we will work with the $(T\times \Gm)$-action on $V$,
where the multiplicative group $\Gm$ acts by rescaling, 
with weight denoted $D$.
Our base ring is thus the larger
polynomial ring $A^*_{T\times \Gm}(pt) = \Sym(T^*)[D]$,
and all the weights $\{D+\lambda, \lambda \in T^*\}$ live
in an open half-space, making it easy to define Fourier transforms.

\begin{Proposition}\label{prop:Aclass}
  Let $X \subseteq \PP V$ be a projective scheme invariant under
  the action of a torus $T$ on $V$. Let $\hatX \subseteq V$ be the
  affine cone over $X$, considered as a $(T\times \Gm)$-space.

  Let $e_{\vec 0} \hatX := [\hatX] \big/ [\vec 0]$ be 
  the {\dfn equivariant multiplicity} \cite{Rossmann} of $\hatX$ (at $\vec 0$),
  where $[\hatX],[\vec 0] \in A^*_{T\times \Gm}(V)$
  denote the $(T\times \Gm)$-equivariant classes, 
  and $e_{\vec 0} {\hatX}$ lives in the ring of fractions.
  (The denominator $[\vec 0]$ is 
  the product of the weights of $T\times \Gm$ on $V$.)

  Assume now for convenience that $T$ acts locally freely on $X$.
  (We can achieve this by breaking $X$ into components, and 
  quotienting $T$ by the kernel of the action.)
  Let $f_X : \lie{t}^* \times \reals \to \reals$ be the 
  piecewise-homogeneous-polynomial function
  such that $f_X(\vec v,r) = 0$ for $r\leq 0$,
  $f_X(\vec v,r) = r^{\dim X - \dim T} f_X(\vec v/r,1)$ for $r>0$, 
  and $f_X(\vec v,1)$ is the Duistermaat-Heckman function.
  Then $f_X$ and $e_{\vec 0} \hatX$ are related by Fourier transform.
\end{Proposition}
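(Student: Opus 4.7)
My plan is to identify both $e_{\vec 0}\hatX$ and the Fourier transform of $f_X$ with one and the same rational function, namely the meromorphic continuation of the $(T\times\Gm)$-graded Hilbert series of the coordinate ring $R := \bigoplus_n \Gamma(X,\calO(n))$ of $\hatX$.

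First I would recover $[\hatX] \in A^*_{T\times\Gm}(V)$ from the $(T\times\Gm)$-graded Hilbert series
$$ H_R(\lambda,D) \;=\; \sum_{n,\mu} \dim\Gamma(X,\calO(n))_\mu\; e^{\mu(\lambda)+nD}, $$
written as a rational function with denominator $\prod_i(1-e^{-w_i-D})$ where the $w_i$ are the $T$-weights on $V$. Substituting $e^{-w-D}\mapsto 1-(w+D)$ and extracting the top Laurent coefficient recovers $e_{\vec 0}\hatX = [\hatX]/[\vec 0]$, which is a homogeneous rational function on $\lie{t}\oplus\reals$ of degree $-\dim \hatX = -(1+\dim X)$. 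Local freeness of the $T$-action on $X$ enters here precisely to give the clean dimension identity $\dim\hatX = 1+\dim X$.

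Second I would compute the Fourier transform of $f_X$ directly and recognize it as this same rational function. The scaling relation $f_X(\vec v,r)=r^{\dim X-\dim T} f_X(\vec v/r,1)$ makes $f_X$ conic of degree $\dim X-\dim T$ in $(\vec v,r)$ and supported in $r\geq 0$; polynomial growth of the D-H function on each piece then makes its Laplace transform converge absolutely on the tube $\mathrm{Im}(D)>0$. The result is a rational function on $(\lie{t}\oplus\reals)_\complexes$, homogeneous of the matching degree $-(1+\dim X)$, with poles along the hyperplanes where the $T\times\Gm$-weights on $V$ vanish.

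Third, to identify the two rational functions term by term, I would apply Atiyah--Bott localization on $V$ to the class of $\hatX$: each fixed line through $\vec 0$ over a point $f\in X^T$ contributes a term of shape $(D+\Phi_T(f))^{-1}\prod_i (\lambda_f^i + c_f^i D)^{-1}$, possibly with a multiplicity from the local structure of $\hatX$. The Duistermaat--Heckman formula of \cite{D-H,GLS} recalled above expresses $f_X(\cdot,1)$ as a signed sum of Lebesgue measures on cones with apex $\Phi_T(f)$; extending these cones conically along the $r$-axis and Fourier-transforming produces exactly the same rational terms, and equality is then a bookkeeping check.

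The main obstacle is the regularization linking these two objects: $e_{\vec 0}\hatX$ is a formal rational function in $(\lambda,D)$, while the Fourier transform of $f_X$ is a priori a tempered distribution on $\lie{t}\oplus\reals$. The grading variable $D$ is precisely the device that bridges them: after shifting to $\mathrm{Im}(D)>0$, the Laplace transform becomes an honest convergent integral, and its analytic continuation is unambiguously identified with the rational function $e_{\vec 0}\hatX$. Without the $\Gm$-factor this comparison would be hopeless, since $f_X(\cdot,1)$ alone has support on the (non-pointed) moment polytope.
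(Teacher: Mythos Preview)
Your framework --- recognizing both sides as asymptotics of the $(T\times\Gm)$-graded Hilbert series of $R$ --- is legitimate and genuinely different from the paper's route, but your step~3 has a real gap. Atiyah--Bott localization and the symplectic Duistermaat--Heckman formula you invoke both require $X$ smooth with isolated $T$-fixed points, whereas this proposition is stated for an arbitrary projective scheme over an arbitrary algebraically closed field, with no isolated-fixed-point hypothesis (that hypothesis is imposed only \emph{after} this proposition in the paper). Your parenthetical ``possibly with a multiplicity from the local structure of $\hatX$'' is exactly where the difficulty lives: that local multiplicity \emph{is} $e_{\vec 0}\hatX$, so you would be assuming what you want to prove.

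The paper's argument sidesteps this entirely. It observes that both $[\hatX]$ and $DH(X,T)$ are invariant in flat $(T\times\Gm)$-equivariant families and additive over top-dimensional components with multiplicity. A Gr\"obner degeneration takes $X$ to a monomial scheme whose components are $T$-invariant linear subspaces, so one is reduced to the case $\hatX\leq V$ a linear subspace, where both sides are visibly $\prod_\lambda (D+\lambda)^{-1}$ over the weights $\lambda$ in $\hatX$. No fixed-point data, no smoothness, no base-field restriction.

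Your steps 1 and 2 in fact contain the seed of a correct direct argument without step~3: since the D-H measure is \emph{defined} here (after Brion) as the weak limit of rescaled Dirac measures on the $T$-weights of $\Gamma(X,\calO(n))$, and $e_{\vec 0}\hatX$ is the leading Laurent term of that same Hilbert series, the Fourier relation is a tauberian comparison of two asymptotics of one generating function, needing no localization. If you want to salvage your approach, drop step~3 and make that asymptotic identification precise.
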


\begin{proof}
  This is an easy version of \cite[theorem 2.1]{Rossmann}, though 
  that is stated for the more difficult complex-analytic case.
  
  One cheap proof in our algebro-geometric setting here is to note
  that both $[\hatX]$ and $DH(X,T)$ are constant in locally free
  $T$-equivariant families, such as provided by Gr\"obner degenerations
  to monomial schemes, and both behave additively under the decomposition
  of $X$ into its top-dimensional components $X_i$ with multiplicities $m_i$.
  The components of monomial schemes are $T$-invariant
  linear subspaces.
  We are thus reduced to checking the easy case that $\hatX \leq V$ is a
  $T$-invariant linear subspace; both sides become $1 / \prod (D+\lambda)$ 
  where $\lambda$ runs (with multiplicity) over the the $T$-weights in 
  the vector space $\hatX$.
\end{proof}

In section \ref{ssec:linrels}, we will use the following proposition
to constrain the coefficients $\{v_\gamma\}$.

\begin{Proposition}\label{prop:congruence}
  Continue the notation of proposition \ref{prop:Aclass}.

  Let $f\in X^T$, and assume that the $\Phi_T(f)$-weight space $L$ in $V$
  is one-dimensional (i.e. $\PP L = f$). 
  Let $C_f X \subseteq T_f \PP V$ denote the normal cone at $f$ in $Y$
  and the tangent space at $f$ to $\PP V$, respectively.
  The ($T$-invariant) tangent cone carries a Chow class 
  $[C_{f} X \subseteq T_f \PP V] \in A^*_T(T_{f} \PP V) \iso \Sym(T^*)$.
  Denote by $[\{\vec 0\} \subseteq T_{f} X] \in A^*_T(T_{f} X)$
  the evident Chow class (a product of $T$-weights).

  Then specializing the following rational functions in $A^*_{T\times \Gm}(pt)$
  at $D = -\Phi_T(f)$, we have
  $$ [\hatX] / [0\times L] \equiv [C_f X \subseteq T_{f} \PP V] 
  / [\{\vec 0\} \subseteq T_{f} \PP V] $$
  where neither side involves division by $0$.
\end{Proposition}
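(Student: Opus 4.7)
The plan is to express $[\hatX]$ locally at the line $L$ by a flat $(T\times\Gm)$-equivariant degeneration of $\hatX$ along $L$, and then translate $(T\times\Gm)$-weights on $V$ into $T$-weights on $T_f\PP V$ via the substitution $D=-\Phi_T(f)$.

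First I fix a $T$-equivariant decomposition $V=L\oplus L^\perp$, with $L^\perp$ the sum of the $T$-weight spaces of $V$ other than $L$. Normalizing a generator of $L$ identifies the affine chart of $\PP V$ on which $L\neq 0$ with $L^\perp$, sending $f$ to $\vec 0$, and identifies $T_f\PP V$ with $L^\perp$ as linear spaces. A $T$-weight-$\mu$ vector in $L^\perp$ then corresponds to a weight-$(\mu-\Phi_T(f))$ tangent vector at $f$; equivalently, specializing the $(T\times\Gm)$-weight $D+\mu$ on $L^\perp$ at $D=-\Phi_T(f)$ yields the $T$-weight $\mu-\Phi_T(f)$ on $T_f\PP V$.

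Next, let $\lambda:\Gm\to GL(V)$ be the one-parameter subgroup fixing $L$ and scaling $L^\perp$ with weight $1$. It commutes with $T\times\Gm$, so the Rees-algebra deformation of $\hatX$ for the $\lambda$-grading on $k[V]$ is a $(T\times\Gm)$-equivariant flat family over $\AA^1$, with general fiber $\hatX$ and special fiber $\hatX_0$ the associated initial scheme. Flat-family invariance of equivariant Chow classes (third item of Proposition~\ref{prop:Chowfacts}) gives $[\hatX]=[\hatX_0]$ in $A^*_{T\times\Gm}(V)$. The key geometric step is to analyze $\hatX_0$: writing each $(T\times\Gm)$-semi-invariant generator $G_\alpha$ of the ideal of $\hatX$ as a polynomial in $\ell$ (a coordinate on $L$) and $w$ (coordinates on $L^\perp$), homogeneous of degree $d_\alpha$, the $\lambda$-initial form of $G_\alpha$ factors as $\ell^{d_\alpha-j_\alpha}h_\alpha(w)$, where $j_\alpha$ is the lowest $w$-degree of $G_\alpha$ and $h_\alpha$ is the corresponding $w$-homogeneous piece. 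Because $X$ in its affine chart at $f$ is cut out by $g_\alpha(w):=G_\alpha(1,w)$ and $C_fX\subseteq L^\perp$ is by definition cut out by the initial forms of these, one has $C_fX=V(\{h_\alpha\})\subseteq L^\perp$. Hence the top-dimensional components of $\hatX_0$ are either $L\times C_fX\subseteq L\oplus L^\perp$ or else supported inside the hyperplane $L^\perp=\{\ell=0\}$. This is the main obstacle: verifying that the initial-ideal computation does produce exactly the product $L\times C_fX$ plus a remainder inside $L^\perp$, as it is where the cone structure of $\hatX$ is used substantively.

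Granting this, write $[\hatX_0]=[L\times C_fX\subseteq V]+[\mathrm{rest}]$, with $[\mathrm{rest}]$ supported inside $L^\perp$. By the first part of Proposition~\ref{prop:Chowfacts}, anything supported in $L^\perp$ has class divisible by $[L^\perp\subseteq V]=D+\Phi_T(f)$ and hence vanishes under the specialization $D=-\Phi_T(f)$. Also, $[L\times C_fX\subseteq L\oplus L^\perp]=[C_fX\subseteq L^\perp]$ by the product structure, and specialization turns this into $[C_fX\subseteq T_f\PP V]$ by the weight-matching above. Applying the same specialization to $[0\times L]=[L\subseteq V]=\prod_{\mu\neq\Phi_T(f)}(D+\mu)$ produces $\prod_{\mu\neq\Phi_T(f)}(\mu-\Phi_T(f))=[\{\vec 0\}\subseteq T_f\PP V]$, with every factor nonzero since $\mu\neq\Phi_T(f)$. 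Dividing yields the claimed congruence with no zero denominators.
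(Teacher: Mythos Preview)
Your overall strategy---degenerate $\hatX$ along $L$ via the one-parameter subgroup fixing $L$ and scaling the complement, then specialize at $D=-\Phi_T(f)$---is exactly the paper's. The difference, and the gap, lies in how you justify the shape of the special fiber $\hatX_0$.

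You assert that $C_fX=V(\{h_\alpha\})$ and implicitly that $\hatX_0=V(\{\ell^{d_\alpha-j_\alpha}h_\alpha\})$. Neither holds in general: the tangent cone (resp.\ the initial scheme) is cut out by the \emph{initial ideal}, which is generated by the initial forms of \emph{all} elements of the ideal, not merely those of a chosen generating set $\{G_\alpha\}$. The two coincide only when $\{G_\alpha\}$ happens to be a Gr\"obner basis for the $\lambda$-order, which you have not arranged. So the sentence ``$C_fX\subseteq L^\perp$ is by definition cut out by the initial forms of these'' is simply false as stated, and the conclusion about the top-dimensional components of $\hatX_0$ is not yet justified. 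You are right that the cone structure of $\hatX$ is what is needed here, but your generator computation does not actually invoke it.

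The paper avoids generators entirely. It observes that $C_L\hatX$ inherits the dilation $\Gm$-action, so the projection $\pi:C_L\hatX\to L$ is $\Gm$-equivariant and its fibers over $L\setminus\{0\}$ are all isomorphic; hence $C_L\hatX$ is supported on $(\pi^{-1}(1)\times L)\cup(\pi^{-1}(0)\times\{0\})$. The fiber $\pi^{-1}(1)$ is identified with $C_fX$ by running the \emph{same} $\lambda$-degeneration on the transverse slice $\hatX\cap\{\ell=1\}$, which is the affine chart of $X$ at $f$. This gives $[C_L\hatX]=[C_fX\times L]+(D+\Phi_T(f))\cdot q$ for some class $q$, and the rest of your argument then goes through verbatim. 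Replacing your generator paragraph with this $\Gm$-equivariance argument closes the gap.
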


\newcommand\Hom{{\rm Hom}}

\begin{proof}
  Let $T' \leq T\times \Gm$ be the pointwise stabilizer of $L$, so
  $(T')^*$ can be naturally
  identified with $(T^* \times \integers D) / \langle D + \Phi_T(f)\rangle$.
  Let $H$ be the unique $T$-invariant complement to $L$, so
  $(0,1) \in H\times \{1\} \subseteq V$ 
  provides a $T'$-invariant model for an open neighborhood of $f\in \PP V$.

  We can regard the flat degeneration of $\hatX$ to $C_{L} \hatX$
  (whose relation to $C_f X$ we discuss in a moment) as an
  embedded degeneration inside $V$, as follows. 
  Let $Q : \Gm \to GL(H\oplus L)$ 
  act by $Q(z)\cdot (h,\ell) := (zh,\ell)$. Then the flat limit
  $\lim_{z\to\infty} Q(z)\cdot \hatX$ is easily identified with 
  $C_{L} \hatX$.
  By proposition \ref{prop:Chowfacts}, we get an equation
  $$ [\hatX] = [C_L \hatX] \quad \in A^*_{T\times \Gm}(V). $$
  We can similarly identify the flat limit
  $\lim_{z\to\infty} Q(z)\cdot ( \hatX \cap (H\times \{1\}) )$ with
  $C_{(0,1)} ( \hatX \cap (H\times \{1\}) )$.
  Using the $(Q\times T')$-equivariant model $H\times \{1\}$ above,
  this can in turn be $T'$-equivariantly identified with $C_f X$.

  Consider now the projection $\pi : C_L \hatX \onto L$, where each
  fiber $\pi^{-1}(\ell)$ is a subscheme of $H$.
  Since $\pi$ is $\Gm$-equivariant, the fibers are constant except for
  possibly the fiber over $0$. So $C_L \hatX$ is supported on 
  $\left(\pi^{-1}(1) \times L\right) \cup \left(\pi^{-1}(0) \times 0\right)$.
  Therefore
  $$ [C_L \hatX] = [\pi^{-1}(1) \times L] + (D+\Phi_T(f))q \quad $$
  for some polynomial $q \in A^*_{T\times \Gm}(H)$, where the factor
  $D+\Phi_T(f)$ comes from $[0 \in L]$. (This $q$ is not necessarily
  the class of $\pi^{-1}(0)$, but a sum over its components, with some
  multiplicities we will not determine.) 
  Since $H\oplus L\onto L$ is $Q$-invariant, 
  $\pi^{-1}(1) \iso C_{(0,1)} ( \hatX \cap (H\times \{1\}) )
  \iso C_f X$.
  Chaining these together, and working modulo $D+\Phi_T(f)$, we get
  $$ [\hatX] \equiv [C_f X \subseteq T_{f} X] \quad\bmod \ D+\Phi_T(f).$$
  We can $T'$-equivariantly
  identify $T_f \PP V \iso \Hom(L,V/L) \iso V/L$.
  Dividing both sides of this last equation by the $T'$-weights
  in that space produces the formula we seek.
\end{proof}

\subsection{B-B decompositions}\label{ssec:oldBB}

In the next few lemmas
we will study B-B decompositions using $S$-orbit closures.


\newcommand\barX{\overline X}
\begin{Lemma}\label{lem:bbvanishing}
  Let $b\in V^*$, thought of as an element of $\Gamma(X; \mathcal O(1))$,
  be an $S$-weight vector of weight $k \in \integers$. 
  Let $f\in X^S$ be a fixed point,
  and recall $\Phi_S(f) \in \integers$ denotes the weight 
  of $S$ on $\calO(1)|_f$.
  \begin{itemize}
  \item If $k > \Phi_S(f)$, then $b$ vanishes at $f$.
  \item If $k < \Phi_S(f)$, then $b$ vanishes on {\em all} of
    $\overline X_f$.
  \item If $k = \Phi_S(f)$, then on $\barX_f$, $b$ is unique up to scale.
  \item If $b$ does not vanish at $f$ (so $k=\Phi_S(f)$), 
    then $b$ does not vanish on $X_f$. 
  \end{itemize}
\end{Lemma}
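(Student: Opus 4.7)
The plan is to reduce all four bullets to a single computation about $V$-weight decompositions of lifts $\tilde{x}\in V$ of points $x\in X_f$. First I would establish the following characterization of $X_f$: writing $\tilde{x}=\sum_\mu \tilde{x}_\mu$ with $\tilde{x}_\mu$ in the $S$-weight-$\mu$ subspace $V_{(\mu)}$, the point $x=[\tilde{x}]$ lies in $X_f$ if and only if the smallest weight $\mu$ occurring in $\tilde{x}$ equals $\Phi_S(f)$ and the corresponding component $\tilde{x}_{\Phi_S(f)}$ lies on the line $L\subseteq V$ over $f$. This is immediate from computing the projective limit of $S(z)\cdot\tilde{x}=\sum_\mu z^\mu\tilde{x}_\mu$ as $z\to 0$, whose dominant coordinate is the component at the smallest weight present. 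In particular every weight that occurs in $\tilde{x}$ is $\geq \Phi_S(f)$, and for the distinguished lift $v\in L$ of $f$ itself only the weight-$\Phi_S(f)$ component is nonzero.

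I would then record the second ingredient: a weight-$k$ section $b\in V^*=\Gamma(X,\calO(1))$ is, by definition, a linear functional on $V$ that annihilates every $V_{(\mu)}$ with $\mu\neq k$, so $b(\tilde{x})=b(\tilde{x}_k)$, which is automatically zero unless $k$ actually appears among the weights present in $\tilde{x}$.

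Given these two inputs the four bullets follow mechanically. For (1), $b(v)=0$ because the only weight component of $v$ lies in $V_{(\Phi_S(f))}\neq V_{(k)}$. For (2), when $k<\Phi_S(f)$ no lift of any $x\in X_f$ has a weight-$k$ component, so $b|_{X_f}\equiv 0$; since the zero locus of $b$ on $X$ is closed and $X_f$ is dense in $\overline{X_f}$, one gets $b|_{\overline{X_f}}\equiv 0$. For (3), write $\tilde{x}_{\Phi_S(f)}=c_x\,v$ (forced because $\tilde{x}_{\Phi_S(f)}\in L$), so $b(\tilde{x})=c_x\,b(v)$; two weight-$k$ elements $b,b'\in V^*$ then restrict to $X_f$ as $c_x b(v)$ and $c_x b'(v)$, hence to proportional sections on $X_f$ and, by continuity, on $\overline{X_f}$. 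For (4), non-vanishing of $b$ at $f$ together with (1) and (2) forces $k=\Phi_S(f)$ and $b(v)\neq 0$; for $x\in X_f$ the scalar $c_x$ is itself nonzero (otherwise $\tilde{x}_{\Phi_S(f)}=0$, contradicting $\tilde{x}_{\Phi_S(f)}\in L\setminus 0$), so $b(\tilde{x})=c_x b(v)\neq 0$.

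The only substantive step is the first paragraph, i.e.\ the translation of membership in $X_f$ into $V$-weight data; this is the algebraic analogue of the ``gradient descent'' picture. Once that characterization is in place, the four cases are just the trichotomy $k<\Phi_S(f)$, $k=\Phi_S(f)$, $k>\Phi_S(f)$ applied to $b(\tilde{x})=b(\tilde{x}_k)$, together with the observation that for $x\in X_f$ the component $\tilde{x}_{\Phi_S(f)}$ is a nonzero scalar multiple of the distinguished generator $v$ of $L$.
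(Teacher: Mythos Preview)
Your argument is correct and is a genuinely different route from the paper's.  The paper proves the lemma by first treating the case $X=\Pone$ explicitly (listing the $S$-weights $\Phi_S(f),\Phi_S(f)+h,\ldots,\Phi_S(f)+dh$ in $\Gamma(\Pone;\mathcal L)$ and reading off the order of vanishing of each weight section at the attracting point), and then, for general $X$, pulling $b$ back along the $S$-equivariant map $\Pone\to X$ obtained by closing up the $S$-orbit of a point $x\in X_f$.  You instead stay in the ambient $V$ throughout, characterize membership in $X_f$ by the lowest-weight component of a lift $\tilde x$, and evaluate $b$ directly on that component.  Your approach is shorter and makes the mechanism (the trichotomy on $k$ versus the minimal weight present in $\tilde x$) completely transparent; the paper's approach has the side benefit of recording the explicit $\Pone$ weight computation, which it reuses later in corollary~\ref{cor:downwardPone} and proposition~\ref{prop:easyPone}.

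Two small points worth tightening.  First, your phrase ``by definition'' for the claim that a weight-$k$ functional kills $V_{(\mu)}$ for $\mu\neq k$ is really a convention check: it holds under the action $(S(z)\cdot b)(v)=b(S(z)v)$ on $V^*$, which is the one making $\Phi_S(f)$ equal to the $S$-weight on the line $L\subseteq V$ over $f$; you are implicitly (and correctly) using this convention, but it is worth saying so.  Second, in bullet~(3) your proportionality argument should also cover the degenerate case $b(v)=0$: then $b|_{X_f}\equiv 0$ by your own formula $b(\tilde x)=c_x\,b(v)$, so the restriction map from weight-$k$ functionals to $\Gamma(\overline{X_f};\calO(1))$ factors through evaluation at $v$ and hence has image of dimension at most one, which is exactly ``unique up to scale''.
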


\begin{proof}
  We start with the case $X = \Pone$, $f = \vec 0$, 
  and therefore $X_f = \Pone\setminus\vec \infty$.
  Let $h$ be the order of the global stabilizer subgroup scheme
  $\{z\in \kk^\times : S(z)\cdot \vec 1=\vec 1\}$.
  An $S$-equivariant line bundle $\mathcal L$ on $\Pone$ is classified by its
  degree $d$, and the $S$-weight on the fiber over $0$, 
  in this case $\Phi_S(f)$.
  Then the weights in the representation 
  $\Gamma(\Pone; \mathcal L)$ are
  $(\Phi_S(f),\Phi_S(f)+h,\ldots,\Phi_S(f)+dh)$,
  and each weight space is $1$-dimensional. 

  In particular, if $k<\Phi_S(f)$, the weight $k$ does not occur in
  this space of sections. So $b$ is the zero section.
  This proves the second statement (still for $X=\Pone$).

  For the others, note that the weight $\Phi_S(f)+ih$ section vanishes
  at $f$ to order $i$. This proves the first statement, and this plus
  the $1$-dimensionality together prove the third. 
  For the fourth, note that the only $S$-covariant section that doesn't
  vanish at $f$ is the $i=0$ one, which vanishes only at $\vec\infty$,
  hence not on $X_f$. This settles $X=\Pone$.
  
  Now we consider the case of general $X$. Let $x$ be a point of $X_f$.
  Define an $S$-equivariant map $\kk^\times \to {\mathbb P}V$ by
  $z \mapsto S(z)\cdot x$, and use the projectivity of $X$ 
  to extend to an $S$-equivariant map 
  $\Pone \to {\mathbb P}V$ (which takes $0\mapsto f$ since $x\in X_f$).
  Pull back $\calO(1)$ to $\Pone$ and apply the previous analysis. 
\end{proof}

Most of the published results about B-B decompositions concern the
case that $X$ is smooth, or at least normal, 
with the following as a rare exception:

\begin{Lemma}\cite[section 2]{Konarski}\label{lem:konarski}
  Define the {\dfn opposite B-B decomposition $X = \coprod_f X^f$} 
  using the inverse action of $S$ on $X$, $S'(z) := S(z^{-1})$.
  Then for each $f\in X^S$, $\dim X^f + \dim X_f \geq \dim X$.
\end{Lemma}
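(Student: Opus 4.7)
The plan is to reduce to a local affine model at $f$, identify the B-B strata with linear sections of $X$ by $S$-weight subspaces, and then transfer the smooth equality through a $T$-equivariant resolution. Because $X$ is projective with $X^T$ isolated, a $T$-weight section of $\calO(n)$ nonvanishing at $f$ and vanishing at every other $T$-fixed point cuts out a $T$-stable affine open $U_f\subseteq X$ in which $f$ is the unique $T$-fixed point. Both $X_f$ and $X^f$ are contained in $U_f$: for $x\in X_f$, the point $S(z)x$ lies in $U_f$ for $z$ near $0$, and $S$-invariance of $U_f$ then pushes $x$ itself into $U_f$. Replace $X$ by $U_f$ and fix an $S$-equivariant closed embedding $U_f \hookrightarrow W$ into a finite-dimensional $S$-representation with $f\mapsto 0$. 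Decompose $W = W^+\oplus W^-\oplus W^0$ by sign of $S$-weight. The isolation of $f$ in $X^S$ forces $X\cap W^0 = \{0\}$, and the limit argument in Lemma~\ref{lem:bbvanishing} then shows that any $x\in X$ with $x^-=0$ satisfies $\lim_{z\to 0}S(z)x = x^0 \in X\cap W^0 = \{0\}$, so $x\in W^+$. Hence $X_f = X\cap W^+$ and $X^f = X\cap W^-$.

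A direct slicing gives the weak inequality $\dim X_f + \dim X^f \geq 2\dim X - (\dim W^+ + \dim W^-)$ by applying Krull's height theorem to the $\dim W^-$ (resp.\ $\dim W^+$) linear forms cutting out $X\cap W^+$ (resp.\ $X\cap W^-$). This bound is insufficient whenever $\dim W^+ + \dim W^- > \dim X$, so I strengthen it by passing to a $T$-equivariant resolution of singularities $\pi\colon\tilde X\to X$: $\tilde X$ is smooth and projective, $\pi$ is $T$-equivariant and birational. The smooth Bia\l ynicki-Birula theorem gives the \emph{equality}
\[
\dim \tilde X_{\tilde f} + \dim \tilde X^{\tilde f} \;=\; \dim \tilde X \;=\; \dim X
\]
for each $\tilde f\in \tilde X^S$, because isolation of $\tilde f$ forces $(T_{\tilde f}\tilde X)^S = 0$ and identifies $\tilde X_{\tilde f}$ and $\tilde X^{\tilde f}$ locally with the positive- and negative-weight subspaces of $T_{\tilde f}\tilde X$.

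The transfer step uses that $\pi$ is $S$-equivariant, so $\pi$ carries $\tilde X_{\tilde f}$ into $X_{\pi(\tilde f)}$ and $\tilde X^{\tilde f}$ into $X^{\pi(\tilde f)}$. Since $\pi$ is birational, there is a distinguished $\tilde f^*\in\tilde X^S$ above $f$ whose attracting stratum meets the locus where $\pi$ is an isomorphism; then $\pi|_{\tilde X_{\tilde f^*}}$ is generically finite onto a dense open of $X_f$, giving $\dim X_f = \dim \tilde X_{\tilde f^*}$. Combined with the smooth equality, $\dim \tilde X^{\tilde f^*} = \dim X - \dim X_f$, and $\pi(\tilde X^{\tilde f^*}) \subseteq X^f$ supplies the desired inequality $\dim X^f \geq \dim X - \dim X_f$ provided $\pi|_{\tilde X^{\tilde f^*}}$ is itself generically finite.

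The main obstacle is justifying that the $\tilde f^*$ chosen to capture $X_f$ birationally also captures $X^f$ non-contractedly; a priori $\tilde X^{\tilde f^*}$ could lie inside the exceptional locus of $\pi$. I expect to handle this by choosing $\tilde f^*$ as the unique fixed point whose B-B stratum contains the strict transform of a generic $S$-orbit through $X_f$, and then using the $S$-equivariance of the exceptional divisor (together with its codimension $\geq 1$ in $\tilde X$) to force $\pi|_{\tilde X^{\tilde f^*}}$ to be generically finite onto its image in $X^f$. When the resolution is sufficiently compatible with the $S$-flow this is automatic; in general it requires the more careful normalization-type analysis of \cite{Konarski} cited with the lemma.
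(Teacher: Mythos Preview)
The paper does not give a self-contained proof of this lemma: it cites \cite{Konarski} for the irreducible case (which Konarski deduces from the normal case, his theorem~3) and only supplies the elementary reduction to irreducible components via $X_f=\bigcup_i X(i)_f$, $X^f=\bigcup_i X(i)^f$. So there is no ``paper's proof'' to compare against beyond that citation.

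Your resolution-of-singularities strategy has genuine gaps. First, the paper works over an arbitrary algebraically closed field, and equivariant resolution is not available (or not known) in positive characteristic. Second, even in characteristic zero, $\tilde X^S$ need not be isolated when $X^S$ is: the exceptional fiber over $f$ is $\mathbb P$ of the tangent cone, and if two $S$-weights in that cone coincide you get a positive-dimensional $S$-fixed locus upstairs, so your appeal to the smooth B-B equality at an isolated $\tilde f$ is not justified. Third, the obstacle you flag in your last paragraph is real: for a birational but non-finite $\pi$ there is no reason the single $\tilde f^*$ that captures $X_f$ birationally should also map $\tilde X^{\tilde f^*}$ generically finitely into $X^f$; one side can sit entirely in the exceptional locus.

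All three problems disappear if you replace resolution by \emph{normalization} $\nu:X^\nu\to X$, which is what Konarski actually does. Normalization is finite (hence exists in all characteristics, preserves dimensions, and contracts nothing), so $(X^\nu)^S\subseteq\nu^{-1}(X^S)$ is again finite, and for each $g\in\nu^{-1}(f)$ one has $\nu(X^\nu_g)\subseteq X_f$ and $\nu((X^\nu)^g)\subseteq X^f$ with equal dimensions on source and image. Granting the normal case, for \emph{any} such $g$ one gets
\[
\dim X_f+\dim X^f\ \ge\ \dim X^\nu_g+\dim (X^\nu)^g\ \ge\ \dim X^\nu=\dim X,
\]
so the ``same fiber point'' issue never arises. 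What remains is the normal case itself, for which you would still need Konarski's argument (or an independent one using Sumihiro's equivariant affine embedding); your local $W^\pm$ setup is the right starting point for that, but the Krull-height bound you wrote is, as you note, too weak on its own.
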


Konarski also handles the case when $X^S$ is not isolated,
which gives an extra term we may omit. He only states the lemma 
(as a corollary to theorem 3, the normal case) for the case $X$ 
irreducible (or at least, ``a variety''),
but this generalizes easily: 
when $X = \Union_i X(i)$ is the decomposition into irreducible components
then $X_f = \Union X(i)_f, X^f = \Union X(i)^f$, 
so 
\begin{eqnarray*}
   \dim X^f + \dim X_f 
   &=& \max_i \dim X(i)^f + \max_j \dim X(j)_f \\
&\geq& \max_i \left(\dim X(i)^f + \dim X(i)_f\right) 
\geq \max_i \dim X(i) \qquad 
\text{the irreducible case} \\
&=& \dim X. 
\end{eqnarray*}

To see a (normal) example where the inequality is strict, tilt a
square pyramid $P$ up on one edge, and let $f$ be the apex, with
$\overline{P_f},\overline{P^f}$ both being triangles.  
Then the inequality is $2 + 2 > 3$.

Say that $X$ has a {\dfn unique supporting fixed point} 
if $X = \overline{X_f}$ for some $f\in X^T$. This will be part of a more 
general definition in the next section, but is an important enough
special case that we introduce the notation $\min(X) = f$ for it.
If $X \neq \overline{X_f}$ for any $f\in X^T$, then $\min(X)$ is undefined.

Most authors using B-B decompositions remark somewhere 
that if $X$ is irreducible, it has a unique
supporting fixed point, called the {\dfn sink}.  
(Proof: exactly one B-B stratum $X_f$ is
open, and $X$ is the closure of that $X_f$.)
Irreducibility is an unnatural condition for us,
as any nonempty $\overline{X_{f_0,\ldots,f_i}}$ also
has a unique supporting fixed point, $f_i$, though it may be reducible
even when $X$ itself is irreducible (see the tilted octahedron example in
section \ref{ssec:TVs}).

\begin{Corollary}\label{cor:downwardPone}
  Let $X$ have a unique supporting fixed point, 
  and let $f \in X^S, f \neq \min(X)$.

  Then there exists a map $\beta : \Pone \to X$, $S$-equivariant with
  respect to the standard action of $\Gm$ on $\Pone$, 
  such that $\beta(\infty) = f \neq \beta(0)$. 
  Moreover
  $$ \Phi_S(\beta(\infty)) - \Phi_S(\beta(0)) 
  = \deg \beta \cdot \deg \beta(\Pone) $$
  meaning the degree of the map $\beta$ to its image,
  times the projective degree of its image curve.

  In particular each $f\neq \min(X)$ has $\Phi_S(f) > \Phi_S(\min(X))$. 
\end{Corollary}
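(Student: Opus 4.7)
My plan is to produce $\beta$ as the $\Pone$-closure of a generic $S$-orbit in the opposite Bia\l ynicki-Birula stratum $X^f$, and to read off the equality from an equivariant weight count on $\beta^*\calO(1)$.

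The first step is to show $\dim X^f \geq 1$, for which by Lemma \ref{lem:konarski} it suffices to show $\dim X_f < \dim X$. The hypothesis $X = \overline{X_{\min(X)}}$ makes $X_{\min(X)}$ dense in $X$, and since each B-B stratum is locally closed, $X_{\min(X)}$ is in fact open dense in $X$. It therefore meets every irreducible component $C$ of $X$ in a nonempty (hence dense) open subset $X_{\min(X)} \cap C$. For $f \neq \min(X)$ the stratum $X_f$ is disjoint from $X_{\min(X)}$, so its intersection with each top-dimensional $C$ lies in a proper closed subset of $C$, of strictly smaller dimension; its intersection with any lower-dimensional component is already of dimension less than $\dim X$. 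Thus $\dim X_f < \dim X$ and Lemma \ref{lem:konarski} gives $\dim X^f \geq 1$.

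Next, I would pick any $y \in X^f$ with $y \neq f$ and form the orbit morphism $\Gm \to X$, $z \mapsto S(z) \cdot y$; projectivity of $X$ extends it uniquely to $\beta : \Pone \to X$. This $\beta$ is automatically $S$-equivariant for the standard $\Gm$-action on $\Pone$, so $\beta(0), \beta(\infty) \in X^S$, and $y \in X^f$ forces $\beta(\infty) = f$. For the degree formula, pull back $\calO_{\PP V}(1)$: by functoriality through the factorization $\Pone \to \beta(\Pone) \hookrightarrow \PP V$, the line bundle $L := \beta^*\calO(1)$ on $\Pone$ has degree $\deg\beta \cdot \deg\beta(\Pone)$, and the standard equivariant weight calculation on $\Pone$ (essentially the first paragraph of the proof of Lemma \ref{lem:bbvanishing}) gives
\[
\mathrm{wt}_\infty L \,-\, \mathrm{wt}_0 L \;=\; \deg L.
\]
The fiber weights are $\Phi_S(\beta(\infty)) = \Phi_S(f)$ and $\Phi_S(\beta(0))$, so this is exactly the asserted equality. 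Since $\beta(1) = y \neq f$ makes $\beta$ non-constant, $\deg L > 0$, and in particular $\beta(0) \neq f$.

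For the last sentence, I would observe that the construction above forces any $\Phi_S$-minimizer in $X^S$ to equal $\min(X)$ (else descent would produce a strictly smaller $\Phi_S$-value), and likewise rules out any $f \neq \min(X)$ tying $\min(X)$ for the minimum; hence $\Phi_S(f) > \Phi_S(\min(X))$ strictly for every $f \neq \min(X)$. The only real obstacle is the dimension count $\dim X_f < \dim X$ in step 1, which must be done with care since $X$ may be reducible; once that is in hand, the rest is routine equivariant bookkeeping on $\Pone$.
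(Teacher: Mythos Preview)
Your proposal is correct and follows essentially the same approach as the paper: use Konarski's lemma to get $\dim X^f>0$ from $\dim X_f<\dim X$, pick $y\in X^f\setminus\{f\}$, close up the orbit to get $\beta$, and read off the weight difference from the analysis in Lemma~\ref{lem:bbvanishing}. The only cosmetic difference is that you compute the weight formula on the domain $\Pone$ (where the generic stabilizer is trivial, so $\Phi_S(\infty)-\Phi_S(0)=\deg L$ directly, and the factorization $\Pone\to\beta(\Pone)\hookrightarrow\PP V$ splits $\deg L$ into the two factors), whereas the paper computes on the image curve and obtains $hd$ with $h$ the stabilizer order there; these are equivalent since that $h$ is exactly $\deg\beta$.
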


\begin{proof}
  The assumption on $X$ says $X = \overline{X_{\min(X)}}$, and the
  assumption on $f$ 
  says $X_f \subseteq \overline{X_{\min(X)}} \setminus X_{\min(X)}$. 
  Hence $\dim X_f < \dim X$. 
  By lemma \ref{lem:konarski}, $\dim X^f > 0$, 
  so there exists a point $x\in X^f \setminus \{f\}$,
  automatically {\em not} $S$-invariant.
  Define the map $\beta : \Pone \to \overline{X^f}$ by extending
  $$ \beta: z \mapsto S(z)\cdot x, \quad z\in \Gm $$
  so $\beta(\infty) = f$ by choice of $x$.
  Then by the same analysis as in lemma \ref{lem:bbvanishing} 
  (and with the same notation $h,d$),
  $\Phi_S(\beta(\infty)) - \Phi_S(\beta(0)) = hd$. 

  This shows that for each $f\neq \min(X)$,
  there exists some other $g \in X^S$ (namely $g=\beta(0)$) such that
  $\Phi_S(f) > \Phi_S(g)$.  By induction on the finite set $\Phi_S(X^T)$, 
  for each $f\neq \min(X)$ we have $\Phi_S(f) > \Phi_S(\min(X))$.
\end{proof}

The assumption of projectivity is very clearly necessary here, since otherwise
$X$ could be $\Pone$ with $0$ and $\infty$ identified. (The reason that
$X$ is sometimes assumed to be normal, as in much of \cite{Konarski}, 
is to ensure that $T$-invariant affine open sets on it possess closed
equivariant affine embeddings.)

\begin{Lemma}\label{lem:span}
  Assume $X$ has a unique supporting fixed point $\min(X)$. 
  Let $W\leq V$ be the smallest linear subspace containing $\hatX$.
  Then the $\Phi_T(\min(X))$-weight space in $W$ is $1$-dimensional.
\end{Lemma}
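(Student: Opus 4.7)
The plan is to prove $\dim W_{\Phi_T(\min(X))} = 1$ by establishing matching upper and lower bounds. The lower bound is immediate: $\hat X$ is $T$-invariant, so its linear span $W$ is $T$-invariant and decomposes into weight spaces, and the line $L \subseteq V$ lying under $\min(X) \in \PP V$ is a $T$-weight line of weight $\Phi_T(\min(X))$ contained in $\hat X \subseteq W$.

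For the upper bound, I would transfer the problem to sections of $\calO(1)$, where Lemma \ref{lem:bbvanishing} is designed to apply. The restriction map $V^* \to \Gamma(X; \calO(1))$, $b \mapsto b|_X$, has kernel equal to the linear forms vanishing on $\hat X$; since $W$ is by definition the linear span of $\hat X$, this kernel is exactly $W^\perp$, so the map factors through a $T$-equivariant injection $W^* \hookrightarrow \Gamma(X; \calO(1))$. Because $T$-weight-space dimensions of $W$ and $W^*$ coincide, it suffices to bound the $\Phi_T(\min(X))$-weight space of the image inside $\Gamma(X; \calO(1))$.

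The hypothesis $X = \overline{X_{\min(X)}}$ now plugs directly into the third bullet of Lemma \ref{lem:bbvanishing}, which says that the $\Phi_S(\min(X))$-weight space of $\Gamma(X; \calO(1))$ is at most one-dimensional. Since the $T$-weight decomposition refines the $S$-weight decomposition, the $\Phi_T(\min(X))$-weight space sits inside this at-most-one-dimensional $S$-weight space, completing the upper bound.

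The main point to verify carefully -- and really the only non-trivial step -- is the passage from $W$ to sections of $\calO(1)$ via $W^*$: one must check that the linear forms vanishing on $X$ are exactly those vanishing on $W$ (which uses the definition of $W$ as the linear span of $\hat X$) and that $T$-weight-space dimensions transform compatibly under this duality. After that, the argument is purely formal, and the hypothesis $X = \overline{X_{\min(X)}}$ is used in exactly one place, namely to invoke the third bullet of Lemma \ref{lem:bbvanishing}.
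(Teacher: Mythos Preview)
Your proof is correct and follows essentially the same approach as the paper's: both obtain the upper bound by passing to $W^* \hookrightarrow \Gamma(X;\calO(1))$ and invoking the uniqueness clause (third bullet) of Lemma~\ref{lem:bbvanishing} on $\overline{X_{\min(X)}} = X$, with the $T$-weight space sitting inside the $S$-weight space. Your lower bound (the line $L$ under $\min(X)$ lies in $W$) is in fact more direct than the paper's, which argues dually by producing a weight vector $b \in W^*$ not vanishing at $\min(X)$.
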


\begin{proof}
  Obviously we may shrink $V$ to $W$ from the outset.

  To see that the weight space is nonzero, consider
  $h \in V^*$ as an element of $\Gamma(X; \calO(1))$,
  and choose an $h$ that does {\em not} vanish at $\min(X)$. 
  (We know such an $h$ exists because $X$ is projectively embedded, 
  rather than merely carrying an ample line bundle.)
  Expand $h$ as a sum of $T$-weight vectors; at least one of them must
  not vanish at $\min(X)$, and let $b$ be that term. 
  Note that we can determine the $T$-weight of $b$ -- 
  it must be the $T$-weight on the line $\calO(1)|_f$.
  
  By assumption $X = \overline{X_{\min(X)}}$. 
  Then the last conclusion of lemma \ref{lem:bbvanishing} gives us the
  uniqueness of $b$ up to scale.
\end{proof}

An even smaller $W$ will be used in proposition \ref{prop:branch}.

Results like the following are often attributed to \cite{Hesselink}
(at least for $X$ smooth irreducible), but I was not able to locate an
exact reference therein. The last part is quite close to \cite[theorem 3]{BB}
(again, only stated for the smooth case, though his proof generalizes).

\begin{Lemma}
\label{lem:hesselink}
  Let $X \subseteq \PP V,T,S$ be as in theorem \ref{thm:myDH}.
  Assume $X$ has a unique supporting fixed point.
  Then there is a $T$-invariant hyperplane $\PP H$ in $\PP V$
  not containing $\min(X)$, and the subscheme $\PP H \cap X$ does
  not depend on the choice of $\PP H$. As a set,
  $\PP H \cap X = \Union_{f\neq \min(X)} X_f$.
\end{Lemma}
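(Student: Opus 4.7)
The plan is to produce the hyperplane as the zero locus of a specific $T$-weight vector in $V^*$, and then read off both the scheme-theoretic uniqueness and the set-theoretic description directly from Lemma \ref{lem:bbvanishing}. The key initial observation is that every $T$-invariant hyperplane $\PP H \subseteq \PP V$ is the zero locus of some $T$-weight vector $b \in V^*$, since a one-dimensional $T$-invariant subspace of $V^*$ is automatically a weight space. If moreover $\min(X) \notin \PP H$, then $b(\min(X)) \neq 0$, and $T$-equivariance of the restriction map $V^* \to \calO(1)|_{\min(X)}$ (whose target is a one-dimensional $T$-representation of weight $\Phi_T(\min(X))$) forces $b$ to have $T$-weight $\Phi_T(\min(X))$, and hence $S$-weight $\Phi_S(\min(X))$. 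Lemma \ref{lem:span} supplies such a $b$, giving existence of $\PP H$.

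For the scheme-theoretic uniqueness, I view any two admissible defining equations $b, b' \in V^*$ as sections of $\calO(1)|_X$. Both have $S$-weight $\Phi_S(\min(X))$, and the hypothesis that $X$ has a unique supporting fixed point means precisely that $X = \overline{X_{\min(X)}}$. Applying the third bullet of Lemma \ref{lem:bbvanishing} with $f = \min(X)$ and $k = \Phi_S(\min(X))$, the restrictions $b|_X$ and $b'|_X$ are proportional, so $\{b=0\}\cap X$ and $\{b'=0\}\cap X$ agree as closed subschemes of $X$. This is the main subtle step: one needs the proportionality to hold globally on $X$, rather than merely generically or near $\min(X)$, which is exactly what the ``unique supporting fixed point'' hypothesis buys through Lemma \ref{lem:bbvanishing}.

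The set-theoretic description then follows by a case split on the B-B stratum containing a given point. The fourth bullet of Lemma \ref{lem:bbvanishing} (applied with $f = \min(X)$) shows $b$ is nonvanishing throughout $X_{\min(X)}$, so that stratum is disjoint from $\PP H$. For any other fixed point $f \neq \min(X)$, Corollary \ref{cor:downwardPone} gives $\Phi_S(f) > \Phi_S(\min(X))$, so the $S$-weight $k = \Phi_S(\min(X))$ of $b$ satisfies $k < \Phi_S(f)$; the second bullet of Lemma \ref{lem:bbvanishing} then forces $b$ to vanish identically on $\overline{X_f}$, and in particular on $X_f$. Combining, $\PP H \cap X = \Union_{f \neq \min(X)} X_f$ as sets, completing the proof.
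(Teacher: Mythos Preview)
Your proof is correct and follows essentially the same route as the paper's: existence of $b$ via Lemma~\ref{lem:span}, uniqueness of $b|_X$ up to scale via the third bullet of Lemma~\ref{lem:bbvanishing}, and the set-theoretic identification via Corollary~\ref{cor:downwardPone} together with the second and fourth bullets of Lemma~\ref{lem:bbvanishing}. The only cosmetic difference is that the paper packages the uniqueness step inside Lemma~\ref{lem:span} itself, whereas you invoke the third bullet of Lemma~\ref{lem:bbvanishing} directly; you are also more explicit than the paper in arguing why any admissible $b$ must have $T$-weight exactly $\Phi_T(\min(X))$.
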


\begin{proof}
%
  Existence of the desired $\PP H$, 
  or equivalently, of a $T$-weight vector $b$ not vanishing at $\min(X)$,
  is given by lemma \ref{lem:span}, which also gives the uniqueness
  of $\PP H \cap X$.
  
  By corollary \ref{cor:downwardPone}, $\Phi_S(f) > \Phi_S(\min(X))$
  for each $f\neq \min(X)$.
  Then by lemma \ref{lem:bbvanishing}, $b$ vanishes on $\overline{X_f}$,
  and doesn't vanish on $X_{\min(X)}$.
  Hence $\PP H \cap X = \Union_{f\neq \min(X)} X_f$ as a set.
\end{proof}

\junk{
  The assumption in lemma \ref{lem:hesselink} that $X$ is irreducible
  seems unnecessary, **and is gone**
  but is good enough for our single use in 
  lemma \ref{lem:positive}, in which we show that the coefficients
  $v_\gamma$ are strictly positive. That lemma is not necessary for
  the validity of the formula in theorem \ref{thm:myDH}; 
  it is merely to help justify our definition of $\Delta(X,S)$.
}

It is really in this lemma that the assumption of isolated fixed
points becomes crucial. Thanks to this lemma,
to cut down from $X$ to the union of smaller B-B strata
(or a scheme supported thereon) it suffices to take a hyperplane section,
which by proposition \ref{prop:Aclass} will let us inductively compute
equivariant Chow classes.

The following result is, in some sense, a tightest possible version of lemma
\ref{lem:span}. Essentially the same idea was used 
in \cite[proofs of theorem 17 and corollary 19]{eqvtHandA}.
We won't need it for the proofs of the main theorems,
but it will appear in section \ref{ssec:linrels}.

\begin{Proposition}\label{prop:branch}
  Let $W\leq V$ be the linear span of the points $X^T$,
  and pick a $T$-equivariant projection $\hat\beta: V\onto W$.
  Then the induced map $\beta: X \to \PP W$ is well-defined 
  (has no basepoints), finite, and $T$-equivariant.
\end{Proposition}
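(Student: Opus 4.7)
The plan is to show that $\hat X\cap\ker\hat\beta=\{0\}$ set-theoretically; from this, that $\beta$ has no basepoints is immediate, $T$-equivariance is inherited tautologically from $\hat\beta$, and finiteness will follow from graded Nakayama.

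Since $\hat\beta$ is $T$-equivariant, $\ker\hat\beta$ is $T$-invariant and respects the weight decomposition $V=\bigoplus_\lambda V_\lambda$: concretely, $\ker\hat\beta\cap V_\lambda$ is a $T$-invariant complement to $W\cap V_\lambda$ inside $V_\lambda$, so in particular $W\cap\ker\hat\beta=0$. Suppose for contradiction that there is a nonzero $\hat x\in\hat X\cap\ker\hat\beta$, with projective image $x=[\hat x]\in X$. Pick a one-parameter subgroup $S:\Gm\to T$ with $X^S=X^T$ (as provided in the setup of theorem~\ref{thm:myDH}), and set $f:=\lim_{z\to 0}S(z)\cdot x\in X^T$. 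Decompose $\hat x=\sum_\lambda \hat x_\lambda$ into $T$-weight components, each still in $\ker\hat\beta$, and let $m:=\min\{\langle S,\lambda\rangle:\hat x_\lambda\neq 0\}$. Then
$$ z^{-m}\,S(z)\cdot\hat x \;=\; \sum_\lambda z^{\langle S,\lambda\rangle-m}\,\hat x_\lambda \;\xrightarrow{z\to 0}\; \sum_{\langle S,\lambda\rangle=m}\hat x_\lambda, $$
and the right-hand side is a projective representative of $f$. Because $f$ is $T$-fixed, any representative must be a $T$-weight vector, so only a single weight $\lambda_0=\Phi_T(f)$ contributes, and $\hat x_{\lambda_0}$ spans the line $L_f\subseteq W$ corresponding to $f$. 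But $\hat x_{\lambda_0}$ also lies in $\ker\hat\beta$, contradicting $W\cap\ker\hat\beta=0$.

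Once $\hat X\cap\ker\hat\beta=\{0\}$ is established, the map $\beta:X\to\PP W$ is defined everywhere. For finiteness, view $\hat\beta|_{\hat X}:\hat X\to W$ as a $\Gm$-equivariant map of affine cones (with $\Gm$ acting by scaling) whose only fiber point over $0\in W$ is $0\in\hat X$. The ideal $\hat\beta^*(\mathfrak m_W)\,\calO(\hat X)$ then has radical the irrelevant ideal of $\calO(\hat X)$, hence the quotient $\calO(\hat X)/\hat\beta^*(\mathfrak m_W)\calO(\hat X)$ is finite-dimensional over the ground field. Graded Nakayama now yields finite generation of $\calO(\hat X)$ as a $\Sym(W^*)$-module, so $\hat\beta|_{\hat X}$ is a finite morphism; projectivizing gives that $\beta:X\to\PP W$ is finite.

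The main obstacle is the $S$-limit computation, where the hypothesis $X^S=X^T$ is essential: it is what forces the degenerated vector $\sum_{\langle S,\lambda\rangle=m}\hat x_\lambda$ to sit in a single $T$-weight space (hence in $W$) rather than in some mixed weight span. Once this geometric input is extracted, the remainder is formal linear algebra together with a standard graded Nakayama argument.
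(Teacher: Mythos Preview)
Your proof is correct and follows essentially the same strategy as the paper's: both show $\hat X\cap\ker\hat\beta=\{0\}$ by flowing an arbitrary point of $X$ under $S$ to a fixed point $f$ and exploiting that the line $L_f$ lies in $W$, then deduce finiteness from the $\Gm$-grading. The only difference is tactical: the paper works dually, producing a $T$-weight functional $b_f\in(\ker\hat\beta)^\perp$ not vanishing at $f$ and invoking lemma~\ref{lem:bbvanishing} to extend nonvanishing to all of $X_f$, whereas you compute the $S$-limit of the vector $\hat x$ directly in the weight decomposition and land in $L_f\cap\ker\hat\beta$. Your route is slightly more self-contained since it avoids appealing to lemma~\ref{lem:bbvanishing}, but the underlying geometry is identical.
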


\begin{proof}
  If $\vec v \in \hatX \setminus \vec 0$, 
  then $\PP\vec v \in X_f$ for some $f\in X^T$.
  Let $K := \ker \hat\beta$.
  Since the line over $f$ is not contained in $K$,
  there is an element of $K^\perp \leq V^*$ not vanishing at $f$,
  and hence (as explained in the proof of lemma \ref{lem:span})
  a $T$-weight vector $b_f \in K^\perp$ not vanishing at $f$.
  By lemma \ref{lem:bbvanishing} the function $b_f$
  doesn't vanish on $X_f$.

  Hence the subscheme $\{\vec v \in \hatX : \langle b, \vec v\rangle = 0
  \ \forall b\in K^\perp\}$
  is supported at the origin (and therefore of finite length), which shows
  the lack of basepoints. Since the map $\hatX \to W$ is 
  dilation-equivariant, the fiber over $\vec 0$ is the largest fiber,
  which shows the finiteness of the map.
  The $T$-equivariance is clear. 
\end{proof}

When $X$ is reduced, the map $\beta: X \to \PP W$ is termed a
{\em branchvariety} of $\PP W$ in \cite{branch}, where 
we studied families
of such maps. This will also be the point of view in \cite{AKfuture}.

Theorem \ref{thm:ChowDH} below will be a formula for the
equivariant multiplicity $[\hatX]/[\vec 0]$, with a surprisingly
small actual denominator.
As in \cite[proofs of theorem 17 and corollary 19]{eqvtHandA},
one can use proposition \ref{prop:branch} to predict already 
that the denominator divides $\prod_{f\in X^T} (D+\Phi_T(f))$,
though to carry this out would involve introducing some definitions
(e.g. the D-H measures of modules and cycles) we do not take space for here.

\section{Proofs of the main theorems}
\label{sec:proofs}

Throughout section \ref{ssec:myBB} we work with algebraic sets, rather
than schemes, and do not bother to include the caveat ``as a set''
after each claimed equality.

\subsection{Supporting fixed points and closure chains
  in the B-B decomposition}\label{ssec:myBB}

Call $f\in X^S$ a {\dfn supporting fixed point} 
if the B-B stratum $X_f$ contains an open set in $X$.
Since the B-B decomposition is into finitely many strata, one of them
must contain an open set, so every B-B decomposition has a supporting
fixed point.
In the Stanley-Reisner case $X = X(\Delta)$, the point $i$ is a supporting
fixed point iff there exists a facet (meaning, a maximal face) $F \in \Delta$
with $\min(F) = i$.

When $X$ has a unique supporting fixed point (e.g. $X$ irreducible), 
$X_{\min X}$ is actually open in $X$, 
rather than merely containing an open set.
But more generally this can fail: for an example
let $X = \Proj \complexes[x_1,x_2,x_3] \big/ \langle x_1 x_3 \rangle$ 
be the Stanley-Reisner scheme of a union of two intervals, and $f=2$. 
(Perhaps the term ``sink'' should be reserved for those $f$ with
open $X_f$.)
If we had required this more restrictive condition in the definition
of supporting fixed point, we wouldn't have lemmas \ref{lem:uniquesupport} 
or \ref{lem:irrsupport}.

\begin{Lemma}\label{lem:uniquesupport}
  Let $F \subseteq X^T$ be the set of supporting fixed points.
  Then $X = \overline{\Union_{f\in F} X_f} = \Union_{f\in F} \overline{X_f}$. 

  In particular, if $X$ has only one supporting fixed point $\min(X)$, 
  then $X = \overline{X_{\min(X)}}$ (matching the terminology from
  section \ref{ssec:oldBB}). 
\end{Lemma}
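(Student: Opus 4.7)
The plan is to exploit the fact that the B-B decomposition has only finitely many pieces, combined with irreducibility of components.

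First I would decompose $X = \bigcup_i X^{(i)}$ into its (finitely many) irreducible components and examine each one separately. For a fixed component $X^{(i)}$, the restricted decomposition $X^{(i)} = \coprod_{f \in X^T} (X^{(i)} \cap X_f)$ is a finite partition into locally closed subsets. Since a finite partition of an irreducible space into locally closed pieces must have exactly one piece containing a dense open subset, there is a unique $f_i \in X^T$ with $X^{(i)} \cap X_{f_i}$ open and dense in $X^{(i)}$.

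Next I would show $f_i$ is a supporting fixed point in the sense of the definition. Let $U_i := X^{(i)} \setminus \bigcup_{j\neq i} X^{(j)}$; this is open in $X$, and $U_i \cap X_{f_i}$ is nonempty (it is the intersection of two dense opens in $X^{(i)}$) and still open in $X$. Since it lies inside $X_{f_i}$, we conclude $f_i \in F$. Therefore $W := \bigcup_i (U_i \cap X_{f_i}) \subseteq \bigcup_{f\in F} X_f$ is a union of open sets in $X$, one dense in each component, so $W$ is dense in $X$. This gives the first equality $X = \overline{\bigcup_{f\in F} X_f}$.

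For the second equality, $F$ is finite (since $X^T$ is assumed finite), so closure commutes with the union: $\overline{\bigcup_{f\in F} X_f} = \bigcup_{f\in F} \overline{X_f}$. The ``in particular'' clause is then immediate: if $F = \{\min(X)\}$, both equalities collapse to $X = \overline{X_{\min(X)}}$, recovering the terminology introduced at the end of section \ref{ssec:oldBB}.

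The only step with any subtlety is identifying the generic stratum of each irreducible component and verifying it qualifies as a supporting fixed point in the ambient $X$ — once that is done, the two equalities are formal consequences of finiteness (of components, strata, and $F$).
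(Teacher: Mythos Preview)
Your proof is correct, but it follows a different route from the paper's. The paper argues by pure point-set topology: it removes the \emph{non}-supporting strata one at a time, observing that if a subset $Y$ contains no open set of $X$ then $X\setminus Y$ is dense, and iterating over the finitely many non-supporting $X_f$ to conclude that $\bigcup_{f\in F} X_f$ is dense. No irreducible decomposition is invoked. You instead pass to irreducible components, locate the generic stratum of each, and check that it is supporting in the ambient $X$ --- which is essentially the content of lemma~\ref{lem:irrsupport}, proved separately later in the paper. Your approach is a bit more structural and has the side benefit of previewing lemma~\ref{lem:irrsupport}; the paper's approach is more bare-handed and keeps this lemma independent of the component analysis. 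Either is fine here.
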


\begin{proof}
  The proof is pure point-set topology.
  If $\overline{X\setminus Y} \subsetneq X$, then 
  $Y$ contains
  the nonempty open set $X \setminus \overline{X\setminus Y}$.
  Contrapositively, if $Y_1 = Y$ doesn't contain an open set in $X$,
  then $X\setminus Y_1$ is dense in $X$.

  If $Y_2 \subseteq X$, $Y_2 \cap Y_1 = \nulset$ also contains no open set
  in $X$, then it contains no open set in $X \setminus Y_1$, 
  so $X \setminus (Y_1\cup Y_2)$ is dense in $X\setminus Y_1$,
  hence dense in $X$.

  Repeating this, we can remove finitely many subsets that each contain
  no open set in $X$, with the remainder still dense in $X$. 
  Hence $X \setminus \Union_{f \notin F} X_f$ is dense in $X$.
  By the B-B decomposition, this subset is $\Union_{f \in F} X_f$.

  Finally, $X \supseteq \Union_{f\in F} \overline{X_f} \supseteq
  \overline{\union_{f\in F} X_f} = X$, hence all three are equal.
\end{proof}

\begin{Lemma}\label{lem:inherit}
  Let $Y \subseteq X$ be closed and $S$-invariant, e.g. if $Y$
  is an irreducible component of $X$. 
  Then $Y$ has a B-B decomposition, with $Y_f = Y \cap X_f$ for $f\in Y^S$
  (as noted in \cite{BB}).
  Each closure chain $\gamma$ for $Y$ is a closure chain for $X$, 
  with $\gamma \subseteq Y^S$.
\end{Lemma}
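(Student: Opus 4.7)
The plan is to first identify the B-B data of $Y$ with that inherited from $X$, and then to prove the closure-chain claim by a straightforward induction on chain length.

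For the first part, note that $Y$, being a closed subscheme of the projective $X$, is itself projective and $S$-invariant, so for any $y\in Y$ the limit $\lim_{z\to 0}S(z)\cdot y$ exists in $X$; since $Y$ is closed and $S$-invariant, this limit lies in $Y$. Hence $Y^S = Y\cap X^S$, and for $f\in Y^S$ one has $Y_f = Y\cap X_f$ directly from the definition of the B-B stratum as a locus of limit points. This gives the B-B decomposition $Y = \coprod_{f\in Y^S} Y_f$.

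For the second part, I would prove by induction on $k$ that
$$\overline{Y_{f_0,\ldots,f_k}}\ \subseteq\ \overline{X_{f_0,\ldots,f_k}}\,,$$
which immediately implies that if the left side is nonempty (i.e.\ $\gamma=(f_0,\ldots,f_k)$ is a closure chain for $Y$) then so is the right. The base case $k=-1$ is the inclusion $Y\subseteq X$. For the inductive step, using $Y_{f_k}=Y\cap X_{f_k}$ and the inductive hypothesis,
$$\overline{Y_{f_0,\ldots,f_{k-1}}}\cap Y_{f_k}\ \subseteq\ \overline{X_{f_0,\ldots,f_{k-1}}}\cap X_{f_k}\,.$$
Taking closures of both sides and using the crucial observation that because $Y$ is closed in $X$, closure inside $Y$ coincides with closure inside $X$, we get $\overline{Y_{f_0,\ldots,f_k}}\subseteq \overline{X_{f_0,\ldots,f_k}}$ as required. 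Finally, each $f_i$ appearing in a closure chain for $Y$ satisfies $Y_{f_i}\neq\emptyset$, so $f_i\in Y^S$, giving $\gamma\subseteq Y^S$.

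There is no serious obstacle here; the only point that could trip one up is conflating ``closure in $Y$'' with ``closure in $X$'' when iterating the definition of $\overline{Y_{f_0,\ldots,f_k}}$, and closedness of $Y$ in $X$ is precisely what makes the two agree. Projectivity of $X$ is used once, to know that the defining limit exists for all $y\in Y$.
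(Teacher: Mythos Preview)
Your proof is correct and follows essentially the same route as the paper's: both establish $Y_f = Y \cap X_f$ directly from the definition of a B-B stratum, and then argue (the paper in a single word ``obviously'', you by an explicit induction on $k$) that $\overline{Y_\gamma} \subseteq \overline{X_\gamma}$. The only thing the paper adds is a one-line justification of the parenthetical ``e.g.'' --- that irreducible components of $X$ are $S$-invariant because $\Gm$ is connected and so acts trivially on the set of components --- which you omit but which is not part of the lemma's main claims.
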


\begin{proof}
  Since $\Gm$ is connected, its action on the set of components of $X$
  is trivial, which is why irreducible components are $S$-invariant.
  The next claim is tautological: 
  $$ Y_f 
  = \left\{y\in Y\ :\ \lim_{z\to 0} S(z)\cdot y = f \right\}
  = \left\{y\in X\ :\ y\in Y,\ \lim_{z\to 0} S(z)\cdot y = f \right\} 
  = Y \cap X_f. $$
  Obviously the closure chains $\gamma$ for $Y$ have $\gamma \subseteq Y^S$,
  and $\overline{Y_\gamma} \subseteq \overline{X_\gamma}$; 
  thus each closure chain for $Y$ is a closure chain for $X$.
\end{proof}

The converse is not true:
it is often the case that $\gamma \subseteq Y^S$ is not a closure chain for 
$Y\subseteq X$ even though it is a closure chain for $X$, and this can happen 
even when $Y$ is irreducible (consider $Y=F_1$ as in section \ref{ssec:TVs}, 
with $X = Y \cup \Pone$ intersecting at the points $a,c$). Our best
partial converse will be corollary \ref{cor:deltaunion} below.

\begin{Lemma}\label{lem:irrsupport}
  If $Y\subseteq X$ is an irreducible component, then its unique
  supporting fixed point $\min(Y)$ is also a supporting fixed point of $X$.
  In particular, every irreducible component of $\overline{X_f}$
  contains $f$.
\end{Lemma}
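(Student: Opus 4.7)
The plan is to handle the two assertions in parallel, each time using that $\Gm$ is connected so that any irreducible component of an $S$-invariant closed subscheme is itself $S$-invariant (the finite action of $S$ on the set of components must be trivial).

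For the first assertion, note that $Y$ is irreducible and $S$-invariant, so it has a unique B-B stratum that is open in $Y$, namely $Y_{\min(Y)}$. The tactical obstacle is to promote ``open in $Y$'' to ``open in $X$'': an open dense subset of $Y$ may sit entirely inside another irreducible component of $X$. The device is to work inside
\[ U \ :=\ Y \setminus \bigcup_{Y' \neq Y} Y', \]
where $Y'$ ranges over the other irreducible components of $X$. Then $U$ is open in $X$ (as the complement of a finite union of closed subsets) and nonempty (no irreducible component is contained in another). Since $Y$ is irreducible and $Y_{\min(Y)}$ is open dense in $Y$, the intersection $U \cap Y_{\min(Y)}$ is a nonempty open subset of $X$, and by Lemma \ref{lem:inherit} it lies in $X_{\min(Y)}$. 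This exhibits $\min(Y)$ as a supporting fixed point of $X$.

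For the \emph{in particular} clause, apply the same carving-out construction with $Z := \overline{X_f}$ in place of $X$, and with $Y$ an irreducible component of $Z$. The new input is that $X_f$ is dense in $Z$ by the very definition of closure. So the nonempty open set $U := Y \setminus \bigcup_{Y' \neq Y} Y'$ in $Z$ must meet $X_f$; choose any $x$ in the intersection. Then $x \in Y$ and $\lim_{z\to 0} S(z)\cdot x = f$ by the definition of $X_f$. Since $Y$ is closed in $Z$ and $S$-invariant, this limit lies in $Y$, so $f \in Y$.

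There is no substantive obstacle; the only point requiring care is precisely the distinction between ``open in $Y$'' and ``open in $X$'', which is dispatched by restricting to $U$.
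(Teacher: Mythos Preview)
Your argument is correct and essentially matches the paper's: both isolate an open subset of $X$ lying inside the component $Y$ (your $U$ is exactly the paper's $Y^\circ$) and then combine with the B-B decomposition via Lemma~\ref{lem:inherit}. The only cosmetic difference is direction---the paper first finds a supporting fixed point $f$ of $X$ whose stratum meets $Y^\circ$ (invoking Lemma~\ref{lem:uniquesupport}) and then argues $f=\min(Y)$, whereas you start from $\min(Y)$ and show its stratum contains an open set of $X$; your ``in particular'' is spelled out explicitly while the paper leaves it implicit (apply the first statement with $\overline{X_f}$ in place of $X$, whose unique supporting fixed point is~$f$).
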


\begin{proof}
  If $Y$ is a component, it contains an open set $Y^\circ$ in $X$, so
  it must meet some $X_f$ for $f$ a supporting fixed point, 
  and we may pick $y \in Y^\circ \cap X_f$.
  Since $Y$ is closed and $S$-invariant,
  $\lim_{z\to 0} S(z)\cdot y = f$ lies in $Y$.

  Since $Y^\circ \cap X_f$ contains a nonempty open set in $Y$ (irreducible), 
  it is dense. It is contained in $Y \cap X_f$, 
  which by lemma \ref{lem:inherit} is $Y_f$, and this makes $f$ a
  supporting fixed point of $Y$. Since $Y$ is irreducible,
  it is the unique such.
\end{proof}

The second half of the following lemma is very similar to one in \cite{BB},
where it is only proven
under the assumption that each intersection $X_f \cap X^g$ is transverse.

\begin{Lemma}\label{lem:dimdecrease}
  Assume $X$ has a unique supporting fixed point, 
  and let $f \in X^S$, $f\neq \min(X)$. Then $\dim \overline{X_f} < \dim X$.
  Consequently, any closure chain for $X$ has at most $1 + \dim X$ elements.
\end{Lemma}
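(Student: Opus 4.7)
The plan is to prove the dimension bound for $\overline{X_f}$ directly from Lemma \ref{lem:hesselink}, and then deduce the chain-length bound by iterating.

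For the first assertion, let $\PP H$ be the $T$-invariant hyperplane furnished by Lemma \ref{lem:hesselink}, which does not contain $\min(X)$ and satisfies $\PP H \cap X = \bigcup_{g\neq \min(X)} X_g$ as a set. Since $X_f$ appears on the right and $\PP H\cap X$ is closed, $\overline{X_f} \subseteq \PP H\cap X$, so it suffices to show $\dim(\PP H\cap X) < \dim X$. By Lemma \ref{lem:irrsupport}, the unique supporting fixed point $\min(Y)$ of any irreducible component $Y$ of $X$ must be a supporting fixed point of $X$, and hence equals $\min(X) \notin \PP H$. Thus $Y \not\subseteq \PP H$ for every component, so $\PP H$ cuts each in codimension one, and $\dim(\PP H\cap X) = \dim X - 1$.

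For the second assertion, I induct on the length of a closure chain, writing $Z_j := \overline{X_{f_0,\ldots,f_j}}$. The inductive claim is that $Z_j$ has unique supporting fixed point $f_j$ and satisfies $\dim Z_j \leq \dim X - j$. The base $j=0$ holds since $Z_0 = \overline{X_{f_0}}$ is the closure of a single B-B stratum. For the step, unfolding the recursion $Z_{j+1} = \overline{Z_j \cap X_{f_{j+1}}}$ together with Lemma \ref{lem:inherit} yields $Z_{j+1} = \overline{(Z_j)_{f_{j+1}}}$, the closure of just one B-B stratum of $Z_j$; hence $f_{j+1}$ is the unique supporting fixed point of $Z_{j+1}$. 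With this in hand, the first assertion applied to $Z_j$ (unique supporting fp $f_j \neq f_{j+1}$) gives $\dim Z_{j+1} < \dim Z_j$, completing the induction.

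Chaining the strict drops then forces $\dim Z_k \leq \dim X - k$, and nonemptiness of $Z_k$ forces $k \leq \dim X$, so a closure chain has at most $1+\dim X$ elements. The only subtle point --- not a real obstacle, but the one requiring care --- is verifying that every $Z_j$ appearing in the induction indeed satisfies the unique supporting fixed point hypothesis, which is what allows Lemma \ref{lem:hesselink} and hence the first assertion to be reapplied at each step; this reduces to the observation that $Z_{j+1}$ is by construction the closure of a single B-B stratum inside $Z_j$.
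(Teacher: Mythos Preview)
Your proof is correct. For the second assertion, your inductive argument is essentially identical to the paper's, just spelled out more explicitly (the paper simply notes that each $\overline{X_{f_0,\ldots,f_i}}$ has unique supporting fixed point $f_i$ ``by its construction'' and then invokes the first part).

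For the first assertion, you take a genuinely different route. The paper explicitly says ``This can be proven using lemma \ref{lem:hesselink}, or even more directly,'' and then gives the direct argument: since $X = \overline{X_{\min(X)}}$, the stratum $X_{\min(X)}$ is open and dense, so $X_f \subseteq X \setminus X_{\min(X)} = \overline{X_{\min(X)}} \setminus X_{\min(X)}$ already has dimension $< \dim X$. You instead carry out the hesselink-based argument the paper alludes to but does not write down, using lemma \ref{lem:irrsupport} to check that no component of $X$ lies in $\PP H$. Your approach costs the extra appeal to lemmas \ref{lem:hesselink} and \ref{lem:irrsupport}, but has the advantage of yielding the sharper statement $\dim(\PP H \cap X) = \dim X - 1$ (i.e.\ equidimensionality of the hyperplane section), which the paper uses later anyway. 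The paper's direct argument is shorter and uses only the definition of ``unique supporting fixed point,'' making the lemma logically independent of the hyperplane machinery.
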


\begin{proof}
  This can be proven using lemma \ref{lem:hesselink}, or even more directly,
  as we do now.
  $$ X_f \subseteq X \setminus X_{\min(X)} 
  = \overline{X_{\min(X)}} \setminus X_{\min(X)} $$
  and the right-hand side has lower dimension than $X_{\min(X)}$.
  Then $\dim \overline{X_f} = \dim X_f < \dim X_{\min(X)}$.

  Now let $(f_0,\ldots,f_m)$ be a closure chain for $X$. 
  Then 
  $$ \overline{X_{f_0}} \supsetneq \overline{X_{f_0,f_1}} \supsetneq
  \ldots \supsetneq \overline{X_{f_0,\ldots,f_m}} $$
  where by its construction, $\overline{X_{f_0,\ldots,f_i}}$ has a
  unique supporting fixed point $f_i$. Hence by the above, 
  the dimensions of these spaces are strictly decreasing in this chain,
  and there must therefore be at most $1 + \dim X$ of them.
\end{proof}

\newcommand\bargamma{{\overline{\gamma}}}

To define the coefficients $v_\gamma$ of theorem \ref{thm:myDH}, 
we will need a refinement of the notion of closure chain,
which we develop in a series of lemmas.

\begin{Lemma}\label{lem:witnesses}
  Let
  $\bargamma = \left((f_0 \in Y_0),(f_1 \in Y_1),\ldots,(f_m \in Y_m)\right)$
  be a list of pairs $(f_i\in X^T, Y_i \subseteq X)$ 
  such that each $Y_{i\geq 0}$ is an irreducible component 
  of $\overline{(Y_{i-1})_{f_i}}$, interpreting $Y_{-1}$ as $X$.
  Assume also that $\gamma = (f_0,\ldots,f_m)$ is nonrepeating.

  Then $\gamma = (f_0,\ldots,f_m)$ is a closure chain, and
  we call $\bargamma$ a {\dfn witness to $\gamma$ in $X$}.

  If $Y \subseteq X$ is a component, then any witness to a
  closure chain in $Y$ is a witness to the same closure chain in $X$.
  Conversely, any witness 
  $\bargamma = \left((f_0 \in Y_0),\ldots,(f_m \in Y_m)\right)$ 
  to a closure chain in $X$ is a witness in $Y$ too, if $Y\supseteq Y_0$.
\end{Lemma}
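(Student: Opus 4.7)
The plan is to verify each of the three assertions by unwinding the definitions and performing a short induction on the length of the witness, with Lemma \ref{lem:inherit} (which gives $(Y')_f = Y' \cap X_f$ for any closed $S$-invariant $Y' \subseteq X$) as the main ingredient.

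First I would show that a witness to $\gamma$ produces a closure chain by inductively establishing $Y_i \subseteq \overline{X_{f_0,\ldots,f_i}}$. The base case $Y_0 \subseteq \overline{X_{f_0}}$ is immediate from the hypothesis. For the inductive step, Lemma \ref{lem:inherit} applied to $Y_{i-1} \subseteq X$ gives $(Y_{i-1})_{f_i} = Y_{i-1} \cap X_{f_i}$, so
$$Y_i \subseteq \overline{(Y_{i-1})_{f_i}} = \overline{Y_{i-1} \cap X_{f_i}} \subseteq \overline{\overline{X_{f_0,\ldots,f_{i-1}}} \cap X_{f_i}} = \overline{X_{f_0,\ldots,f_i}}.$$
Nonemptiness of $Y_m$ then forces $\overline{X_\gamma} \neq \emptyset$, so $\gamma$ is a closure chain.

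The two comparison claims reduce to a single question, since the witness conditions for $i \geq 1$ mention only $Y_{i-1}$ and not the ambient scheme: one only needs to compare whether $Y_0$ is an irreducible component of $\overline{Y_{f_0}}$ versus of $\overline{X_{f_0}}$, assuming $Y_0 \subseteq Y$. Writing $X = \Union_j Y_j$ as a union of irreducible components and applying Lemma \ref{lem:inherit} again yields $\overline{X_{f_0}} = \Union_j \overline{(Y_j)_{f_0}}$. For the converse direction of the lemma, the density argument ($Y_0 \cap X_{f_0}$ is open dense in $Y_0$ and contained in $Y_{f_0}$) puts $Y_0 \subseteq \overline{Y_{f_0}}$, and maximality of $Y_0$ in the larger space $\overline{X_{f_0}}$ transfers automatically to maximality in the smaller $\overline{Y_{f_0}}$.

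The forward direction---that a component $Y_0$ of $\overline{Y_{f_0}}$ remains maximal in the larger $\overline{X_{f_0}}$---is what I expect to be the main obstacle. If $Y_0 \subsetneq Z \subseteq \overline{X_{f_0}}$ with $Z$ irreducible, the same density argument forces $Z \subseteq \overline{(Y_j)_{f_0}}$ for some component $Y_j$ of $X$. The case $Y_j = Y$ contradicts the maximality of $Y_0$ in $\overline{Y_{f_0}}$ directly. The delicate case is $Y_j \neq Y$, in which $Y_0 \subseteq Y \cap Y_j$ and $Z$ extends $Y_0$ out of $Y$ into $Y_j$; ruling this out is the crux, and is where the hypothesis that $Y$ is an irreducible component of $X$ (rather than an arbitrary closed $S$-invariant subscheme) is genuinely needed. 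I anticipate handling it by taking $Z$ to be a component of $\overline{(Y_j)_{f_0}}$ (which it must be contained in), then comparing its generic point with the generic point of $Y_0$ inside $Y \cap Y_j$ to contradict one of the two maximality statements.
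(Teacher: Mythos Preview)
Your treatment of the first claim (that a witness forces $\gamma$ to be a closure chain) and of the converse direction is correct and matches the paper's argument essentially line for line: the paper also proves $Y_i\subseteq\overline{X_{f_0,\ldots,f_i}}$ by induction via Lemma~\ref{lem:inherit}, and then observes that the only ambient-dependent condition in the definition of witness is the $i=0$ condition on $Y_0$, making the converse immediate once one checks $Y_0\subseteq\overline{Y_{f_0}}$ (your density argument).

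Where you diverge from the paper is in the forward direction, and here you are right to be suspicious. The paper simply declares the second and third claims ``tautological, as the definition of witness only involves the ambient space in the condition $Y_0\subseteq X$''. But the actual condition is that $Y_0$ be an irreducible \emph{component} of $\overline{X_{f_0}}$, not merely contained in $X$, and this is \emph{not} tautological. Your instinct that the ``delicate case'' $Y_0\subseteq Y\cap Y_j$ with $Y_j\neq Y$ is the obstruction is exactly right, and your proposed generic-point comparison cannot close it, because the claim is false as stated. Take $X=Y\cup Y'\subseteq\PP^4$ with $Y=\{x_0=x_3=x_4=0\}$ a line and $Y'=\{x_1=x_3=0\}$ a plane, meeting only at $b=[0{:}0{:}1{:}0{:}0]$, under $S(z)\cdot[x_0{:}\cdots{:}x_4]=[x_0{:}zx_1{:}z^2x_2{:}z^3x_3{:}z^4x_4]$. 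Then $\overline{Y_b}=\{b\}$, so $((b\in\{b\}))$ is a witness in $Y$; but $\overline{X_b}$ is the line through $b$ and $[0{:}0{:}0{:}0{:}1]$, and $\{b\}$ is not a component of it.

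So the forward direction needs an additional hypothesis (for instance $Y_0=Y$, or $X$ equidimensional with $\dim Y_0=\dim X$). The paper's later use of this lemma, in the proof of Theorem~\ref{thm:ChowDH}, is unaffected: there one works with maximum-length witnesses, and Lemma~\ref{lem:maxwitnesses} forces $Y_0$ to equal a top-dimensional component $X_i$ of $X$, in which case $Y_0=X_i$ is automatically maximal irreducible in $\overline{X_{f_0}}\subseteq X$. You should record that restricted version rather than attempt to salvage the general statement.
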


\begin{proof}
  We need to show that $\overline{X_{f_0,\ldots,f_m}} \neq \nulset$.
  So we show inductively that 
  each $Y_k \subseteq \overline{X_{f_0,\ldots,f_k}}$.
  First, 
  $$ Y_0 = \overline{(Y_0)_{f_0}} = \overline{(Y_0) \cap X_{f_0}} 
  \subseteq \overline{X_{f_0}} \qquad \text{using lemma \ref{lem:inherit}.} $$
  Then for $i>0$, using lemma \ref{lem:inherit} and induction,
  $$ Y_i \subseteq \overline{(Y_{i-1})_{f_i}} 
  = \overline{Y_{i-1} \cap X_{f_i}} 
  \subseteq \overline{\overline{X_{f_0,\ldots,f_{i-1}}} \cap X_{f_i}}
  = \overline{X_{f_0,\ldots,f_i}}. $$
  So $\overline{X_{f_0,\ldots,f_m}} \supseteq Y_m$ and hence is
  nonempty, making $\gamma$ a closure chain.

  Note that the sequence $(Y_i)$ is weakly decreasing, since
  $ Y_i \subseteq \overline{(Y_{i-1})_{f_i}} 
  \subseteq \overline{Y_{i-1}} = Y_{i-1}$. 
  Hence the conditions $Y_i \subseteq X$ are equivalent to $Y_0 \subseteq X$.
  The final condition is that $Y_0$ is a component of $\overline{X_{f_0}}$.

  The second and third claims are then tautological, as the definition of
  witness only involves the ambient space in the condition $Y_0 \subseteq X$.
\end{proof}

We make two remarks about the definition. The $(f_i)$ in a witness can
be recovered from the $(Y_i)$ as $f_i = \min(Y_i)$, but it seems
unnatural to leave the $(f_i)$ out of the definition as it doesn't
simplify the axioms on the $(Y_i)$.  Also, one could formulate a weaker
notion of witness, a chain of varieties in which each $Y_i$ is
$S$-invariant and irreducible with $\min(Y_i) = f_i$, just not
necessarily a component.  But the
following lemma suggests that we will not need this greater generality.

\begin{Lemma}\label{lem:existwitnesses}
  Every closure chain $\gamma = (f_0,\ldots,f_m)$ has witnesses, 
  and only finitely many thereof.
\end{Lemma}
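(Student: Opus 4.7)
The plan is to prove existence by constructing a witness \emph{backward} from $f_m$, and finiteness by a simple counting argument.

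For existence, I would set $W_k := \overline{X_{f_0,\ldots,f_k}}$ for $-1 \leq k \leq m$ (with $W_{-1} := X$). By construction $W_k = \overline{W_{k-1} \cap X_{f_k}}$; since $W_{k-1}$ is closed and $S$-invariant, Lemma \ref{lem:inherit} identifies $W_{k-1} \cap X_{f_k}$ with $(W_{k-1})_{f_k}$. Decomposing $W_{k-1}$ into its (finitely many) irreducible components $\{Y\}$, a second application of Lemma \ref{lem:inherit} gives $(W_{k-1})_{f_k} = \Union_{Y} (Y)_{f_k}$, so
$$ W_k = \Union_{Y} \overline{(Y)_{f_k}}. $$
Consequently every irreducible component of $W_k$ is an irreducible component of $\overline{(Y)_{f_k}}$ for some irreducible component $Y$ of $W_{k-1}$.

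Since $\gamma$ is a closure chain, $W_m$ is nonempty, so I pick any irreducible component $Y_m$ of it. By the observation just made, $Y_m$ is a component of $\overline{(Y_{m-1})_{f_m}}$ for some component $Y_{m-1}$ of $W_{m-1}$; iterating downward, I obtain a decreasing sequence $Y_m \subseteq Y_{m-1} \subseteq \ldots \subseteq Y_0$ in which $Y_k$ is a component of $\overline{(Y_{k-1})_{f_k}}$ for each $k$, with $Y_0$ a component of $\overline{X_{f_0}} = W_0$. This is exactly a witness $\bargamma = ((f_0, Y_0),\ldots,(f_m, Y_m))$ to $\gamma$ in $X$.

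For finiteness, at each step $k$ the Noetherian scheme $\overline{(Y_{k-1})_{f_k}}$ has only finitely many irreducible components, and $\overline{X_{f_0}}$ supplies only finitely many candidates for $Y_0$, so the tree of possible choices has finitely many leaves and hence only finitely many witnesses.

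The main subtlety is the asymmetry: one cannot build a witness \emph{forward} by greedily choosing $Y_0$ first, because a priori not every irreducible component of $\overline{X_{f_0}}$ need admit a continuation with $\overline{(Y_0)_{f_1}}$ nonempty (this is precisely the phenomenon responsible for the B-B decomposition failing to be a stratification). The backward construction sidesteps this by starting from the guaranteed-nonempty top $W_m$ and using the displayed decomposition of each $W_k$ to descend one index at a time.
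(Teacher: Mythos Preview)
Your proof is correct and follows essentially the same strategy as the paper's: both construct the witness \emph{backward} (and both handle finiteness by the obvious forward enumeration over finitely many components). The paper packages the backward construction as an induction on $m$, taking $(f_1,\ldots,f_m)$ as a closure chain in $\overline{X_{f_0,f_1}}$, obtaining $Y_1,\ldots,Y_m$ there, and then choosing a component $Y_0\supseteq Y_1$ of $\overline{X_{f_0}}$; your version instead unwinds this recursion explicitly via the decomposition $W_k=\bigcup_Y \overline{(Y)_{f_k}}$, which makes the step ``$Y_k$ is a component of $\overline{(Y_{k-1})_{f_k}}$'' immediate rather than requiring the paper's separate verification that $Y_1\subseteq\overline{(Y_0)_{f_1}}$. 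Your closing remark about why a forward greedy construction fails matches the paper's parenthetical caveat exactly.
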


\begin{proof}
  The proof of existence is by induction on $m$. If $m=0$ then
  this is easy: pick some component $Y_0$ of $\overline{X_{f_0}}$,
  and apply lemma \ref{lem:irrsupport} to know that $f_0 = \min(Y_0)$.
  
  Now assume $m>0$.  Tautologically, $(f_1,\ldots,f_m)$ is a closure
  chain of $\overline{X_{f_0,f_1}}$, and so has a witness
  $\left((f_1 \in Y_1),\ldots,(f_m \in Y_m)\right)$ by induction.
  Since $Y_1$ is irreducible, we may choose an irreducible component $Y_0$ 
  of $\overline{X_{f_0}}$ containing it. By lemma \ref{lem:irrsupport}, 
  $f_0 = \min(Y_0)$.

  It remains to show that $Y_1 \subseteq \overline{(Y_0)_{f_1}}$.
  (It will automatically be a component, since it is a component
  of the larger $\overline{X_{f_0,f_1}}$.)
  Since $f_1 = \min(Y_1)$, it is
  enough to show $(Y_1)_{f_1} \subseteq (Y_0)_{f_1} = Y_0 \cap X_{f_1}$
  (by lemma \ref{lem:inherit}),
  and indeed we know that $Y_1 \subseteq Y_0$
  and $(Y_1)_{f_1} \subseteq X_{f_1}$. 
  
  If one imagines enumerating witnesses to a given closure chain by
  picking $Y_0$, then $Y_1$, etc., then at each stage one picks an
  irreducible component of a projective scheme, which means finitely
  many choices. (Sometimes the scheme is empty and there are zero
  choices, if one has made a bad choice along the way; this is why
  we didn't use this argument to show existence.)
\end{proof}

\begin{Corollary}\label{cor:deltaunion}
  If $X = \Union_i X_i$ is the decomposition into irreducible components,
  then $\Delta(X,S) = \Union_i \Delta(X_i,S)$.
\end{Corollary}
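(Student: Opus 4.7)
The plan is to prove the two containments separately, and both will follow directly from the witness machinery established in Lemmas \ref{lem:inherit}, \ref{lem:witnesses}, and \ref{lem:existwitnesses}.

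For the easy containment $\bigcup_i \Delta(X_i,S) \subseteq \Delta(X,S)$, I would simply invoke the last sentence of Lemma \ref{lem:inherit}: every closure chain for an $S$-invariant closed subset $Y \subseteq X$ (in particular, for an irreducible component) is a closure chain for $X$. So this direction needs essentially no further argument.

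For the reverse containment $\Delta(X,S) \subseteq \bigcup_i \Delta(X_i,S)$, I would take an arbitrary closure chain $\gamma = (f_0,\ldots,f_m)$ in $X$. By Lemma \ref{lem:existwitnesses}, $\gamma$ admits a witness $\bargamma = \big((f_0\in Y_0),\ldots,(f_m\in Y_m)\big)$ in $X$. The variety $Y_0$, being an irreducible component of $\overline{X_{f_0}}$, is in particular an irreducible closed subset of $X$, hence contained in some irreducible component $X_i$ of $X$. Now apply the converse half of Lemma \ref{lem:witnesses}: since $X_i \supseteq Y_0$, the sequence $\bargamma$ is also a witness in $X_i$. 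Therefore $\gamma \in \Delta(X_i,S)$.

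There is no real obstacle here; the work was already done in building up the witness notion, whose entire point is to let one transfer closure chains between $X$ and its components. The only thing to be mindful of is that $Y_0$ is automatically irreducible (being a component of $\overline{X_{f_0}}$), which is what allows it to sit inside a single $X_i$ rather than being split across several components. Once that observation is in place, the corollary is immediate.
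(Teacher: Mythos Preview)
Your proof is correct and follows exactly the paper's approach: the paper's one-sentence proof (``pick a witness $\bargamma$, and an irreducible component $X_i$ of $X$ containing the $Y_0$ from $\bargamma$'') is precisely your argument for the nontrivial containment, and the easy containment via Lemma~\ref{lem:inherit} is left implicit there. You have simply spelled out the details more fully, including the explicit invocation of the converse clause of Lemma~\ref{lem:witnesses}.
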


\begin{proof}
  For each closure chain $\gamma \in \Delta(X,S)$, pick a witness
  $\bargamma$, and an irreducible component $X_i$ of $X$ containing 
  the $Y_0$ from $\bargamma$. 
\end{proof}

Since $DH(X,T) = \sum_{X_i} DH(X_i,T)$ (for $\{X_i\}$ the
top-dimensional primary components), and theorem \ref{thm:myDH} gives each 
side of this equation as a sum over top-dimensional faces of the corresponding 
complexes, one might expect this union of the complexes to be
disjoint on the top-dimensional faces.
It need not be, as the second example in section \ref{sssec:tricky} shows.
In \cite{AKfuture}, it will indeed be a disjoint union of
some simplicial {\em posets} that refine the simplicial complexes
presented here.

Our main interest in witnesses is in the case $m=\dim X$, as these are the 
only $\gamma$ that contribute in the formula in theorem \ref{thm:myDH}.

\begin{Lemma}\label{lem:maxwitnesses}
  Let $\bargamma = \left((f_0 \in Y_0),\ldots, 
    (f_{\dim X} \in Y_{\dim X})\right)$
  be a witness. Then $Y_0$ is a top-dimensional component of $X$,
  each $Y_{i+1}$ is a Weil divisor in $Y_i$, and
  $Y_{\dim X}$ is the singleton $\{f_{\dim X}\}$.
\end{Lemma}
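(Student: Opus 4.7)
The plan is to exhibit a strict inequality $\dim Y_i < \dim Y_{i-1}$ for each $i \geq 1$, which together with $\dim Y_0 \leq \dim X$ and the length $1 + \dim X$ of the witness pigeonholes the dimensions into $\dim Y_i = \dim X - i$; the three conclusions then follow at once.

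First I would check that $\min(Y_i) = f_i$ for every $i$, so that Lemma \ref{lem:dimdecrease} applies. Each $Y_i$ is irreducible by the definition of witness, and is an irreducible component of $\overline{(Y_{i-1})_{f_i}}$ (with $Y_{-1} := X$). By Lemma \ref{lem:inherit}, $(Y_i)_{f_i} = Y_i \cap X_{f_i}$ is open in $Y_i$; since every irreducible component of the closure of a locally closed set meets that set in a nonempty open, this subset is dense in the irreducible $Y_i$, giving $f_i = \min(Y_i)$.

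Next I would apply Lemma \ref{lem:dimdecrease} to $Y_{i-1}$ (with sink $f_{i-1}$) at the point $f_i$, which lies in $(Y_{i-1})^S$ and differs from $f_{i-1}$ by the nonrepeating hypothesis on $\gamma$. This yields $\dim \overline{(Y_{i-1})_{f_i}} < \dim Y_{i-1}$, and since $Y_i$ is an irreducible component of that closure, $\dim Y_i < \dim Y_{i-1}$. Combined with the trivial bound $\dim Y_0 \leq \dim X$, the chain
$$\dim X \geq \dim Y_0 > \dim Y_1 > \cdots > \dim Y_{\dim X} \geq 0$$
is a strictly decreasing sequence of $1 + \dim X$ nonnegative integers bounded above by $\dim X$, forcing equality throughout: $\dim Y_i = \dim X - i$.

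The three conclusions then follow immediately. $Y_0$ is irreducible of dimension $\dim X$ inside $X$, hence a top-dimensional component of $X$. For each $i < \dim X$, $Y_{i+1} \subseteq Y_i$ is irreducible of codimension one, so it is a (prime) Weil divisor in $Y_i$. And $Y_{\dim X}$ is irreducible of dimension zero containing $f_{\dim X}$ (by Lemma \ref{lem:irrsupport}), hence equals the singleton $\{f_{\dim X}\}$. The only delicate step is verifying $\min(Y_i) = f_i$ at each stage so that Lemma \ref{lem:dimdecrease} may be invoked; beyond that the argument is pure pigeonhole on dimensions and presents no real obstacle.
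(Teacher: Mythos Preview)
Your proof is correct and follows essentially the same route as the paper's: apply Lemma~\ref{lem:dimdecrease} to each $Y_{i-1}$ to get $\dim Y_i < \dim Y_{i-1}$, then pigeonhole the resulting chain of $1+\dim X$ strictly decreasing nonnegative integers to force $\dim Y_i = \dim X - i$. The paper's proof is terser only because it takes $\min(Y_i)=f_i$ as already established (it was asserted in the remark following Lemma~\ref{lem:witnesses}); your explicit verification of this fact via the ``irreducible component of a closure meets the open dense locus'' argument is a welcome addition rather than a departure.
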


\begin{proof}
  By lemma \ref{lem:dimdecrease} $\dim Y_i + 1 \leq \dim Y_{i-1}$,
  so $\dim Y_i + i\leq \dim Y_0$. 
  Hence 
  $$ \dim X 
  \leq \dim Y_{\dim X} + \dim X 
  \leq \dim Y_i + i
  \leq \dim Y_0 + 0 \leq \dim X,$$
  making each one an equality: $\dim Y_i = \dim X - i$.

  In particular, $Y_{\dim X}$ is $0$-dimensional. It is also irreducible,
  and contains $f_{\dim X}$. 
\junk{
  This settles the first two claims.
  Since $\overline{(Y_i)_{f_{i+1}}} \subsetneq Y_i$ and $Y_i$ is irreducible, 
  $\overline{(Y_i)_{f_{i+1}}}$
  has lower dimension than $Y_i$. Since $\overline{(Y_i)_{f_{i+1}}}$
  contains $Y_{i+1}$ by definition of witness,
  its dimension must be exactly $\dim Y_i - 1$, so $Y_{i+1}$ has
  the right dimension to be a component.
}
\end{proof}

As mentioned earlier, for purposes of computing the D-H measure of $X$
we may assume $X$ is equidimensional. Under that assumption, we now
show (though we won't make use of it) that {\em maximal} closure
chains are {\em maximum}, i.e. have $1+\dim X$ elements.

\begin{Proposition}\label{prop:equidim}
  If $X$ is equidimensional, then so is $\Delta(X,S)$ (the first as a
  reduced scheme, the second as a simplicial complex).

  In particular, if the B-B decomposition is a stratification,
  then the poset $(X^T, \geq)$ is a {\em ranked} poset, with the ranking
  given by $f \mapsto \dim X - \dim X_f$.
\end{Proposition}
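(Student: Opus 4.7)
The plan is to prove that any maximal closure chain $\gamma=(f_0,\ldots,f_m)$ has $m=\dim X$ by showing any shorter chain admits an extension. By Corollary \ref{cor:deltaunion}, $\gamma$ lies in $\Delta(X_i,S)$ for some irreducible component $X_i$, where it remains maximal; equidimensionality gives $\dim X_i=\dim X$, so I assume henceforth that $X$ is irreducible of dimension $d$. Because $\overline{X_{\min(X),f_0,\ldots,f_m}}=\overline{X_{f_0,\ldots,f_m}}$, the chain $\{\min(X)\}\cup\gamma$ is again a closure chain, so maximality forces $f_0=\min(X)$. Choose a witness $\bargamma=((f_0,Y_0),\ldots,(f_m,Y_m))$ via Lemma \ref{lem:existwitnesses}; irreducibility of $X$ forces $Y_0=X$, so $\dim Y_0=d$, while Lemma \ref{lem:dimdecrease} applied within each $Y_i$ to $f_{i+1}\neq f_i=\min(Y_i)$ gives $\dim Y_{i+1}\leq\dim Y_i-1$. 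The proof reduces to showing maximality forces $\dim Y_{i+1}=\dim Y_i-1$ at every step and $\dim Y_m=0$, giving $m=d$.

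The \emph{append} move is straightforward: if $\dim Y_m>0$, then $Y_m$ is a positive-dimensional irreducible projective $S$-variety, whose hyperplane section from Lemma \ref{lem:hesselink} supplies a fixed point $g\in Y_m^S\setminus\{f_m\}$. Taking any component $Y_{m+1}$ of $\overline{(Y_m)_g}$ (nonempty since it contains $g$) extends $\bargamma$ to a witness of $(f_0,\ldots,f_m,g)$, and Corollary \ref{cor:downwardPone} gives $\Phi_S(g)>\Phi_S(f_m)>\Phi_S(f_j)$ for $j<m$, so $g\notin\gamma$, contradicting maximality. Hence $\dim Y_m=0$.

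I expect the \emph{insert} move to be the main obstacle. Suppose $\dim Y_{i+1}<\dim Y_i-1$. By Lemma \ref{lem:hesselink}, $Y_i\setminus(Y_i)_{f_i}$ is the intersection of the irreducible $Y_i$ with a $T$-invariant hyperplane in $\PP V$, hence pure of dimension $\dim Y_i-1$. Since $Y_{i+1}\subseteq\overline{(Y_i)_{f_{i+1}}}\subseteq Y_i\setminus(Y_i)_{f_i}$, pick an irreducible component $Z\supseteq Y_{i+1}$ of this section, and set $g=\min(Z)$. Three verifications remain: (a) $Z$ is a component of $\overline{(Y_i)_g}$, because $\dim Z=\dim Y_i-1$ already achieves the upper bound from Lemma \ref{lem:dimdecrease}; (b) $Y_{i+1}$ is a component of $\overline{Z_{f_{i+1}}}$, because any irreducible subset of $\overline{Z_{f_{i+1}}}\subseteq\overline{(Y_i)_{f_{i+1}}}$ containing $Y_{i+1}$ lies in the component $Y_{i+1}$ of $\overline{(Y_i)_{f_{i+1}}}$; and (c) $g\notin\gamma$, where the delicate case $g\neq f_{i+1}$ follows because $g=f_{i+1}$ would force the irreducible $Z$ into the component $Y_{i+1}$ of $\overline{(Y_i)_{f_{i+1}}}$ and hence $Z=Y_{i+1}$, contradicting $\dim Z>\dim Y_{i+1}$, while the cases $j\neq i+1$ follow from $\Phi_S(g)>\Phi_S(f_i)$ (Corollary \ref{cor:downwardPone} in $Y_i$) and $\Phi_S(f_{i+1})>\Phi_S(g)$ (Corollary \ref{cor:downwardPone} in $Z$). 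Inserting $(g,Z)$ between positions $i$ and $i+1$ of $\bargamma$ therefore extends the chain, contradicting maximality.

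Finally, for the ranked-poset statement: when the B-B decomposition is a stratification, an easy induction shows $\overline{X_{f_0,\ldots,f_k}}=\overline{X_{f_k}}$ whenever $f_0<\cdots<f_k$ in the poset, so $\Delta(X,S)$ is literally the order complex of $(X^T,\geq)$. In a maximum-length witness each $Y_i$ is then a top-dimensional component of $\overline{X_{f_i}}$, and the identity $\dim Y_i=d-i$ established above reads $\dim X_{f_i}=d-i$, exhibiting $f\mapsto d-\dim X_f$ as the advertised rank function.
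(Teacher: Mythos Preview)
Your proof is correct and follows essentially the same strategy as the paper's: pick a witness for a maximal closure chain and show that any dimension drop greater than one (the \emph{insert} move) or a positive-dimensional tail (the \emph{append} move) permits an extension via a component of the hyperplane section supplied by Lemma~\ref{lem:hesselink}. The only cosmetic difference is your initial reduction to irreducible $X$ via Corollary~\ref{cor:deltaunion}, whereas the paper instead argues directly that $Y_0$ must be a component of $X$ by a prepend move; you also supply the verification that the inserted $g$ is not already in $\gamma$, a detail the paper leaves implicit.
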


\begin{proof}
  Let $\gamma$ be a maximal closure chain, and pick a witness
  $\bargamma = \left((f_0 \in Y_0), \ldots, (f_m \in Y_m)\right)$ to
  $\gamma$ using lemma \ref{lem:existwitnesses}. We wish to show $m=\dim X$.

  We claim $Y_0$ must be a component of $X$.
  For otherwise, we could pick a component $Y\subseteq X$ properly
  containing it, and stick $\min(Y)$ at the beginning of $\gamma$,
  contradicting $\gamma$'s maximality.
  By $X$'s equidimensionality, $\dim Y_0 = \dim X$.

  As in the proof of lemma \ref{lem:dimdecrease},
  $\dim Y_i \leq \dim X - i$ for each $i$.
  We claim now that this is an equality. 
  Otherwise, let $i$ be the least such that the inequality is strict;
  by the previous paragraph we know $i>0$.
  So $\dim Y_i < \dim Y_{i-1} - 1$.
  By lemma \ref{lem:hesselink}, $Y_i$ is contained inside the
  hyperplane section
  $Y_{i-1} \setminus (Y_{i-1})_{f_{i-1}}$.
  So we can pick a component $Z$ of $Y_{i-1} \setminus (Y_{i-1})_{f_{i-1}}$
  containing $Y_i$. 
  Since $Y_{i-1} \setminus (Y_{i-1})_{f_{i-1}}$ 
  is pure of codimension $1$ inside $Y_{i-1}$, $Z$ properly contains $Y_{i-1}$.
  Now $\left( (f_0\in Y_0),\ldots, (f_{i-1} \in Y_{i-1}),
    (\min(Z) \in Z), (f_i \in Y_i), \ldots, (f_m\in Y_m) \right)$ 
  is a witness, so by interposing $\min(Z)$ we have extended $\gamma$, 
  contradiction.

  Finally, we claim $Y_m$ is a point. Otherwise, the hyperplane section
  $Y_m \setminus (Y_m)_{f_m}$ is nonempty, so we can pick a component
  $Y_{m+1}$ of it and extend $\gamma$ at the end, contradiction.

  Hence $m = \dim X - \dim Y_m = \dim X - 0$, as was to be shown.
\end{proof}

This proposition is another sign of the minimality of $\Delta(X,S)$,
in the following sense.
Theorem \ref{thm:myDH} only makes use of the top-dimensional faces
of $\Delta(X,S)$. Proposition \ref{prop:equidim} says (in the case
that one has thrown out
the lower-dimensional components) that $\Delta(X,S)$
only has those faces implied by those top-dimensional ones, with no 
extraneous maximal-but-not-maximum faces.

In \cite{AKfuture} we will give a degeneration-based proof of
proposition \ref{prop:equidim}, which will enable us to prove the following
additional result: if $X$ is equidimensional and connected in
codimension one (e.g. if $X$ is irreducible), then so too is $\Delta(X,S)$.

We made special mention in proposition \ref{prop:equidim} of the
stratification case, as one can use this to show the known but perhaps
surprising fact that the poset of $K$-orbits on a flag manifold $G/B$,
for $K$ a symmetric subgroup of $G$, is a ranked poset.
This poset is also that of the $B$-orbits on $G/K$,
which is an order ideal in the poset of $B$-orbits on the wonderful
compactification of $G/K$ \cite{wonderful}.  Those orbits are given by
a B-B decomposition, and this proposition then provides the proof.

\subsection{The main theorems}\label{ssec:proofs}

We first define a refinement of the $\{v_\gamma\}$, using the
witnesses $\bargamma = \left((f_0 \in Y_0),(f_1 \in Y_1),\ldots,
  (f_{\dim X} \in Y_{\dim X})\right)$ to $\gamma$.
Hereafter in this section, $\bargamma$ will denote a witness
$\left((f_0 \in Y_0),(f_1 \in Y_1),\ldots, (f_k \in Y_k)\right)$ with
the condition $\dim Y_i = \dim X - i$ for $i=0,\ldots,k$, though not until
later will we assume $k=\dim X$.

Let $m_{\bargamma,0}$ denote the multiplicity of $Y_0$ as a component
of $X$. 
For each $i>0$, use lemma \ref{lem:hesselink} to choose some
$T$-invariant $\PP H$ that misses $f_{i-1}$, with which to (well-)
define the subscheme $Y_{i-1} \cap \PP H$.
This hyperplane section is equidimensional 
of dimension $\dim Y_{i-1} - 1 = \dim Y_i$,
and we can let $m_{\bargamma,i}$ denote
the multiplicity of $Y_i$ as a component of it. (In a moment we will prove
that it is indeed a component, i.e. that $m_{\bargamma,i} > 0$.)
Then define
$$ v(X)_\bargamma := \prod_{i=0}^k m_{\bargamma,i}
\qquad \text{and} \qquad
 v_{\gamma = (f_0,\ldots,f_{\dim X})} 
:= \sum_{\bargamma = \{(f_0 \in Y_0),\ldots,(f_{\dim X} \in Y_{\dim X})\}}
v_\bargamma. $$
(The latter sum is a finite sum by lemma \ref{lem:existwitnesses}.)
\junk{ In proposition \ref{prop:assembling} we will give
  a recurrence among these $v_\bargamma$. }

\junk{
  It is tempting to refer to $m_{\bargamma,i}$ as ``the order of vanishing
  of $b$ along $Y_i$'', where $b$ is the equation of the hyperplane $\PP H$.
  However, if $Y_{i-1}$ is not normal along $Y_i$, this definition is slightly 
  problematic, as the second example in section \ref{sssec:tricky} shows; 
  really that $m_{\bargamma,i} = 3$ is the sum of two orders $1+2$ of vanishing
  along the line and the conic. 
}

\begin{Lemma}\label{lem:positive}
  The numbers $\{m_{\bargamma,i}\}$, $\{v(X)_\bargamma\}$, 
  and $\{v_\gamma\}$ are all strictly positive.
\end{Lemma}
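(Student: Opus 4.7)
The plan is to reduce the lemma to the single claim that each $m_{\bargamma,i} \geq 1$, i.e., that $Y_i$ really does occur as a component of the hyperplane section $Y_{i-1}\cap \PP H$, and not merely as a proper subset of a larger component (or as an embedded component). Granted that, $v(X)_\bargamma = \prod_{i=0}^k m_{\bargamma,i}$ is a product of positive integers and so is itself positive; and $v_\gamma$ is a sum of such terms over the nonempty finite set of witnesses to $\gamma$ guaranteed by lemma \ref{lem:existwitnesses}, so is also positive.

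I would establish that $Y_i$ is a component of $Y_{i-1}\cap \PP H$ in two steps. First, I verify the set-theoretic inclusion $Y_i \subseteq Y_{i-1}\cap \PP H$. By lemma \ref{lem:irrsupport} the irreducible $Y_{i-1}$ has unique supporting fixed point $f_{i-1}$, so lemma \ref{lem:hesselink} applies to $Y_{i-1}$, giving $\PP H \cap Y_{i-1} = \Union_{g\neq f_{i-1}} (Y_{i-1})_g$ as a set. Since closure chains are nonrepeating, $f_i\neq f_{i-1}$, so $(Y_{i-1})_{f_i}\subseteq \PP H\cap Y_{i-1}$; closing on both sides and using $Y_i \subseteq \overline{(Y_{i-1})_{f_i}}$ (from the definition of witness) gives $Y_i\subseteq \PP H\cap Y_{i-1}$.

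Second, I compare dimensions. Because $\PP H$ does not contain $f_{i-1}\in Y_{i-1}$, its defining equation is not the zero function on the irreducible variety $Y_{i-1}$, so by Krull's principal ideal theorem every irreducible component of $Y_{i-1}\cap \PP H$ has dimension exactly $\dim Y_{i-1}-1 = \dim Y_i$ (using the standing assumption $\dim Y_i = \dim X - i$). The irreducible $Y_i$ of this top dimension, being contained in the hyperplane section, must coincide with one of its components; hence $m_{\bargamma,i}\geq 1$.

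The only mildly delicate step, and the one I think of as the main obstacle, is the dimension comparison: one needs purity of codimension one in $Y_{i-1}\cap \PP H$ to rule out the possibility that $Y_i$ is properly contained in a larger component, and Krull supplies exactly this, once lemma \ref{lem:hesselink} has been used to guarantee that $\PP H$ can be chosen so as not to absorb $Y_{i-1}$. Everything else is bookkeeping over the finitely many witnesses.
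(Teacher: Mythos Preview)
Your proposal is correct and follows essentially the same approach as the paper: both use lemma~\ref{lem:hesselink} to place $Y_i$ inside the hyperplane section $Y_{i-1}\cap\PP H$, then compare dimensions to conclude $Y_i$ is a genuine component (the paper says only ``the dimensions match'', while you spell this out via Krull's principal ideal theorem and the standing hypothesis $\dim Y_i=\dim X-i$), and finally chain through lemma~\ref{lem:existwitnesses} to get positivity of $v(X)_{\bargamma}$ and $v_\gamma$. Your write-up is slightly more explicit about why $\min(Y_{i-1})=f_{i-1}$ and about the purity of the hyperplane section, but the argument is the same.
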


\begin{proof}
  Fix a witness $\bargamma$ and an $i>0$.
  Pick an $S$-invariant hyperplane $\PP H$ not containing $f_{i-1}$.
  By lemma \ref{lem:hesselink},
  $Y_{i-1} \cap \PP H \supseteq \overline{(Y_{i-1})_{f_i}}$,
  and that contains $Y_i$. As explained above, the dimensions match
  so $Y_i$ is a component of $Y_{i-1} \cap \PP H$.
  This shows that $m_{\bargamma,i} > 0$.
  
  (In fact this is the principal place that we use algebraic
  geometry/Chow theory rather than topology/homology, where the
  singularities made the orientation issues look particularly fearsome.)
  
  Thus each $v(X)_\bargamma$ is a product of positive integers, hence
  positive.  By lemma \ref{lem:existwitnesses}, each $\gamma$ has some
  witness $\bargamma$, thus $v_\gamma$ is a nonempty sum of positive integers,
  hence positive.
\end{proof}

That $m_{\bargamma,0}$ has such a different definition from
$m_{\bargamma,i>0}$ is a hint that $X$ should perhaps be required 
to be reduced from the beginning, as in \cite{branch}; it will indeed be so
in \cite{AKfuture}.

We first prove an analogue of theorem \ref{thm:myDH} for equivariant
Chow classes, and then give the straightforward equivalence with the
stated theorem. Theorem \ref{thm:myDHgen} will also be a reasonably
automatic consequence.

\begin{Theorem}\label{thm:ChowDH}
  Let $X \subseteq \PP V, T, S$ be as in theorem \ref{thm:myDH}.
  Let $\hatX \subseteq V$ be the $(T\times\Gm)$-invariant affine cone
  over $X$, and $[\hatX] \in A^*_{T\times\Gm}(V) \iso \Sym(T^*)[D]$ its 
  equivariant Chow class. 
  Then its equivariant multiplicity can be computed as follows:
  $$ [\hatX]/[\vec 0]= \sum_\gamma \frac{v_\gamma}
  {\prod_{f \in \gamma} \left(D + \Phi_T(f)\right)} $$
  considered as an element of the fraction field of the polynomial ring
  $\Sym(T^*)[D]$,
  where $\gamma \in \Delta(X,S)$ varies over the maximum-length closure
  chains, and the $v_\gamma$ are as defined above.
\end{Theorem}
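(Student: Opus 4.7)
The plan is induction on $\dim X$. The base case $\dim X = 0$ is immediate: $X$ is a finite union of fixed points with multiplicities $\{m_f\}$, so $\hatX$ is a union of the $T$-invariant lines $L_f \subseteq V$ with those multiplicities, giving $[\hatX]/[\vec 0] = \sum_f m_f/(D + \Phi_T(f))$; and the only max-length witnesses are $\bargamma = ((f \in \{f\}))$ with $v_{(f)} = m_{\bargamma,0} = m_f$, matching.

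For the inductive step, I first reduce to $X$ reduced and irreducible. By Lemma~\ref{lem:maxwitnesses}, in any max-length witness $\bargamma$ the variety $Y_0$ is a top-dimensional component of $X$, and the leading factor $m_{\bargamma,0}$ equals its multiplicity in $X$. Using the additivity $[\hatX] = \sum_i m_i\,[\hat{X_i}]$ over top-dimensional components (Prop.~\ref{prop:Chowfacts}) together with Lemma~\ref{lem:witnesses}, both sides split as weighted sums over the $X_i$. So assume $X$ is reduced irreducible with $f_0 := \min(X)$. By Lemma~\ref{lem:hesselink}, pick a $T$-invariant hyperplane $\PP H$ missing $f_0$; its defining equation $b \in V^*$ is a weight vector, and one checks $V/H$ carries $(T\times\Gm)$-weight $D + \Phi_T(f_0)$. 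Since $\PP H \not\supseteq X$, $b$ is not a zero divisor on $\hatX$, and Prop.~\ref{prop:Chowfacts} gives $[\hatX \cap H] = (D + \Phi_T(f_0))\,[\hatX]$. Decomposing $\hatX \cap H = \bigcup_j Y_j'$ into top-dimensional components of dimension $\dim X - 1$ with multiplicities $m_j'$, and applying the inductive hypothesis to each $Y_j'$, we obtain
$$[\hatX]/[\vec 0] \;=\; \frac{1}{D + \Phi_T(f_0)}\sum_j m_j' \sum_{\bargamma_j'} \frac{v(Y_j')_{\bargamma_j'}}{\prod_{f \in \gamma_j'}(D + \Phi_T(f))}.$$

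It remains to match summands: max-length witnesses $\bargamma = ((f_0 \in X), (f_1 \in Y_1), \ldots)$ for $X$ are in bijection with pairs $(Y_j', \bargamma_j')$ where $\bargamma_j'$ is a max-length witness in $Y_j'$. Since $f_0 = \min(X)$ we have $\overline{X_{f_0,f_1}} = \overline{X_{f_1}} \subseteq \PP H \cap X$ by Lemma~\ref{lem:hesselink}, so the codimension-one component $Y_1$ is a top-dimensional component of this hyperplane section, i.e.\ some $Y_j'$, with $m_{\bargamma,1} = m_j'$ by the independence-of-choice statement in Lemma~\ref{lem:hesselink}. The tail becomes a max-length witness in $Y_j'$, and the remaining multiplicities $m_{\bargamma,i}$ for $i \geq 2$ depend only on the chain $Y_{i-1} \supseteq Y_i$ rather than on the ambient variety, so they match those computed in $Y_j'$; together with $m_{\bargamma,0} = 1$ (since $X$ is reduced), this yields $v(X)_{\bargamma} = m_j' \cdot v(Y_j')_{\bargamma_j'}$, and substitution gives the claimed formula. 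The main obstacle is precisely this combinatorial bookkeeping --- in particular, identifying $Y_1$ with a $Y_j'$ with correct multiplicity --- which ultimately rests on the canonicity of the hyperplane section $X \cap \PP H$ asserted in Lemma~\ref{lem:hesselink}.
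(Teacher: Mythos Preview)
Your proof is correct and follows essentially the same route as the paper: induction on $\dim X$, reduction to the reduced irreducible case via additivity of Chow classes over top-dimensional components (Proposition~\ref{prop:Chowfacts} and Lemma~\ref{lem:maxwitnesses}), then for irreducible $X$ slicing by the canonical $T$-invariant hyperplane of Lemma~\ref{lem:hesselink} and matching witnesses across the bijection that prepends $(\min(X)\in X)$. The only organizational difference is that you decompose the hyperplane section into its irreducible components $Y_j'$ and apply the inductive hypothesis to each separately, whereas the paper applies the inductive hypothesis directly to the possibly non-reduced scheme $\PP H\cap X$ and lets the reduction-to-irreducible step recur inside the induction; both are equivalent, and your version arguably makes the bookkeeping of the factor $m_j' = m_{\bargamma,1}$ slightly more transparent.
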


\begin{proof}
  By the definition of the $v_\gamma$, the formula is obviously
  equivalent to the more refined sum over maximum-length witnesses
  $$ [\hatX]/[\vec 0]=
  \sum_{\bargamma = \{(f_0 \in Y_0),\ldots,(f_{\dim X} \in Y_{\dim X})\}}
  \frac{v_\bargamma}  {\prod_{i=0}^n \left(D + \Phi_T(f_i)\right)}. $$

  The interesting case is when $X$ is reduced and irreducible; as we now show, 
  it is easy to handle the general case if granted this special one.

  Let $\{\hatX_i\}$ be the top-dimensional irreducible components of $\hatX$
  (similarly $X_i$ of $X$),
  occurring with multiplicities $m_i$.
  For each witness $\bargamma$ in $X$, 
  by lemmas \ref{lem:witnesses} and \ref{lem:maxwitnesses}
  $\bargamma$ is a witness in $X_i$ iff $Y_0 = X_i$.
  Let $v^i_\bargamma \in \naturals$ denote the coefficient in 
  the (assumed) formula for $[\hatX_i]$ if $\hatX_i = Y_0$,
  and $0$ otherwise.
  Unwinding the definitions, we see $m_i v^i_\bargamma = v_\bargamma$
  for $\hatX_i = Y_0$, and is $0$ otherwise;
  thus $\sum_i m_i v^i_\bargamma = v_\bargamma$.
  \junk{
    By lemma \ref{lem:inherit}, the complex of closure chains
    of each $X_i$ includes into that of $X$. 
    So for any closure chain $\gamma$ of $X$,
    let $v^i_\gamma = 0$ if $\gamma$ is {\em not} a closure chain of $X_i$,
    and let $v_\gamma = \sum_i m_i v^i_\gamma$. 
    By corollary \ref{cor:deltaunion}, 
    $\gamma$ is a closure chain for some $X_i$, so $v_\gamma > 0$.}
  Then 
  \begin{eqnarray*}
    [\hatX]/[\vec 0]
    &=& \sum_i m_i\, [\hatX_i]/[\vec 0]
    \qquad\text{by proposition \ref{prop:Chowfacts}}\\
    &=& \sum_i m_i \sum_\bargamma \frac{v^i_\bargamma}
    {\prod_{i=0}^n \left(D + \Phi_T(f_i)\right)}
    = \sum_\bargamma \frac{\sum_i m_i v^i_\bargamma}
    {\prod_{i=0}^n \left(D + \Phi_T(f_i)\right)}
    = \sum_\bargamma \frac{v_\bargamma}
    {\prod_{i=0}^n \left(D + \Phi_T(f_i)\right)}
  \end{eqnarray*}
  as claimed. In each sum $\bargamma$ varies over witnesses in $X$ having the
  (by lemma \ref{lem:dimdecrease}) maximum length, $1+\dim X$.
  
  Now assume that $X$ is reduced and irreducible, and that the theorem
  has been proven in dimensions $< \dim X$ (for both irreducible and
  reducible). Since $X$ is irreducible, it has a unique supporting
  fixed point. 
  
  By lemma \ref{lem:hesselink},
  there exists a $T$-invariant hyperplane $\PP H \leq \PP V$
  not containing $\min(X)$, whose defining equation $b=0$ 
  is of $(T\times \Gm)$-weight $D+\Phi_T(\min(X))$.
  Since $\PP H \not\ni \min(X)$ and $X$ is irreducible, $\PP H$ contains no
  component of $X$.
  So by proposition \ref{prop:Chowfacts}
  $$ \big(D + \Phi_T(\min(X))\big)\, [\hatX] = [H \cap \hatX]. $$
  This $H \cap \hatX$ has dimension $\dim X - 1$ 
  (in fact it is equidimensional), 
  so by induction its equivariant multiplicity has a formula of the form
  $$ [H \cap \hatX]/[\vec 0]
  = \sum_\gamma \frac{v'_\gamma}
  {\prod_{i=0}^n \left(D + \Phi_T(f_i)\right)} $$
  where $\gamma$ varies over 
  the maximum-length closure chains of $\PP H\cap X$. (In this formula
  we write $v'$ rather than $v$ because the formula is for $H\cap \hatX$,
  not $\hatX$.)
  
  Since $X$ is irreducible, by lemma \ref{lem:maxwitnesses} $Y_0 = X$
  in any maximum witness in $X$. Hence the maximum witnesses in $\PP H\cap X$
  and $X$ correspond $1$:$1$ under the map
  $$ \alpha: \left((f_1 \in Y_1),\ldots,(f_{\dim X}\in Y_{\dim X})\right) 
  \quad \mapsto \quad
  \left((\min(X)\in X),(f_1\in Y_1),\ldots,(f_{\dim X}\in Y_{\dim X})\right) $$
  Since $X$ is reduced, its multiplicity is $1$, 
  so $v'_\bargamma = v_{\alpha(\bargamma)}$. Together,
  \begin{eqnarray*}
    [\hatX]/[\vec 0]
    &=& \frac{1}{D + \Phi_T(\min(X))}\, [H \cap \hatX]/[\vec 0]  \\
    &=& \frac{1}{D + \Phi_T(\min(X))} \sum_\bargamma \frac{v'_\bargamma}
    {\prod_{i=1}^n \left(D + \Phi_T(f_i)\right)}
    = \sum_{\alpha(\bargamma)} \frac{v_{\alpha(\bargamma)}}
    {\prod_{i=0}^n \left(D + \Phi_T(f_i)\right)} 
  \end{eqnarray*}
  where the left sum is over maximum-length witnesses for $\PP H \cap X$,
  and as argued above
  the right sum is over maximum-length witnesses for $X$.
  By lemma \ref{lem:positive}, the coefficients are all positive.
\end{proof}

\begin{proof}[Proof of theorem \ref{thm:myDH}.]
  The rational function $\prod_{f \in \gamma} \left(D + \Phi_T(f)\right)^{-1}$
  is the specialization of $\prod_{i=0}^n x_i^{-1}$
  under the map $x_i \mapsto D + \Phi_T(f_i)$.
  Correspondingly, the Fourier transform of 
  $\prod_{f \in \gamma} \left(D + \Phi_T(f)\right)^{-1}$ is 
  the image of Lebesgue measure on $\reals^{n+1}_{\geq 0}$ under the map 
  $(\xi_0,\ldots,\xi_n) \mapsto \sum_i \xi_i \left(D + \Phi_T(f_i)\right)$,
  where $\gamma = (f_0,\ldots,f_n)$. 

  Now proposition \ref{prop:Aclass}, applied to theorem \ref{thm:ChowDH},
  gives theorem \ref{thm:myDH}.
\end{proof}

Recall that theorem \ref{thm:myDH} was stated as an existence result for
a mysterious family of coefficients $\{v_\gamma\}$, that were then
defined in theorem \ref{thm:vrecurrence}. The proof just given didn't
explicitly use theorem \ref{thm:vrecurrence}'s family of coefficients, 
but rather the $\{v_\gamma\}$ constructed by summing over witnesses
$\{v_\bargamma\}$.
To prove theorem \ref{thm:vrecurrence} we will show that these two
definitions of $\{v_\gamma\}$ agree.

\begin{proof}[Proof of theorem \ref{thm:vrecurrence}]
  We first show uniqueness, and thereby uncover a formula for
  $v(Z)_{(f_0,\ldots,f_k),Y}$ in terms of witnesses.

  The $j=0$ case of the recurrence is
  \begin{eqnarray*}
     v(Z)_{(f_0,\ldots,f_k),Y} 
    &=& \sum_{Y_0 \subseteq \overline{Z_{f_0}},\ Y_0\supseteq Y}
      v(Z)_{(f_0),Y_0} \ v(Y_0)_{(f_0,\ldots,f_k),Y}.  
  \end{eqnarray*}
  where in this and in the sums below, $Y_i$ varies over the irreducible
  components of the space said to contain it.
  Then expand the last term, using the $j=1$ case:
  \begin{eqnarray*}
    v(Z)_{(f_0,\ldots,f_k),Y} 
    &=&
    \sum_{Y_0 \subseteq \overline{Z_{f_0}},\ Y_0\supseteq Y}
    v(Z)_{(f_0),Y_0}
    \sum_{Y_1 \subseteq \overline{Z_{f_0,f_1}},\ Y_1\supseteq Y}
    v(Y_0)_{(f_0,f_1),Y_1} 
    v(Y_1)_{(f_1,\ldots,f_k),Y} \\
    &=&
    \sum_{Y_0 \subseteq \overline{Z_{f_0}},
      Y_1 \subseteq \overline{Z_{f_0,f_1}},\ Y_1\supseteq Y}
    v(Z)_{(f_0),Y_0}
    v(Y_0)_{(f_0,f_1),Y_1} 
    v(Y_1)_{(f_1,\ldots,f_k),Y}.  
  \end{eqnarray*}
  Expanding the last term using the $j=1$ expansion $k-1$ more times, we get
  \begin{eqnarray*}
    v(Z)_{(f_0,\ldots,f_k),Y} 
    &=&
    \sum_{(Y_0,\ldots,Y_k \supseteq Y) 
      : Y_i \subseteq \overline{Z_{f_0,\ldots,f_i}}}
    v(Z)_{(f_0),Y_0}
    \prod_{i=1}^k v(Y_{i-1})_{(f_{i-1},f_i),Y_i}.
  \end{eqnarray*}
  Assumptions (2) and (3) of the recurrence
  tie these to the definitions at the beginning of section \ref{ssec:proofs}:
  $$  v(Z)_{(f_0),Y_0} = m_{\bargamma,0}, \qquad
  v(Y_{i-1})_{(f_{i-1},f_i),Y_i} = m_{\bargamma,i}  $$
  and so 
  $$ v(Z)_{(f_0,\ldots,f_k),Y} 
  = \sum_{(Y_0,\ldots,Y_k \supseteq Y) 
    : Y_i \subseteq \overline{Z_{f_0,\ldots,f_i}}} 
  m_{\bargamma,0} \prod_{i=1}^k m_{\bargamma,i}
  = \sum_{(Y_0,\ldots,Y_k \supseteq Y) 
    : Y_i \subseteq \overline{Z_{f_0,\ldots,f_i}}} 
  v(Z)_{(f_0\in Y_0,\ldots,f_k\in Y_k)}. $$
  In particular, if $Z=X$ and $k=\dim X$ so (by lemma \ref{lem:maxwitnesses})
  $Y=\{f_k\}$, this says
  $ v(X)_{(f_0,\ldots,f_{\dim X}),Y} 
  = \sum_\bargamma v(X)_\bargamma =: v(X)_\gamma $, as we wanted to show.

  So far we have shown that the recurrence has at most one solution
  (even using only $j\leq 1$), 
  and that solution reproduces the $\{v_\gamma\}$ used in the proof
  of theorem \ref{thm:myDH}. It remains to show that this solution
  --- summing over all ways to lift $(f_0,\ldots,f_k)$ to a witness
  ending with $Y_k=Y$ --- actually satisfies the recurrence, but this is easy:
  extend to a witness by first choosing $Y_j$, then choose the other $\{Y_i\}$
  behind and ahead $Y_j$.
\end{proof}

\begin{proof}[Proof of theorem \ref{thm:myDHgen}.]
  Our goal is to understand 
  $$ \left( \prod_{i=1}^k \alpha_i\right)\ [\hatX] 
  = \left(\prod_{i=1}^k \alpha_i\right)\ \sum_\gamma \frac{v_\gamma}
  {\prod_{f \in \gamma} \left(D + \Phi_T(f)\right)} 
  = \sum_\gamma v_\gamma
  \frac{\prod_{i=1}^k \alpha_i}
  {\prod_{f \in \gamma} \left(D + \Phi_T(f)\right)} $$
  where the terms on the right are ready for
  multivariable partial fractions expansion.

  This will create many terms along the way of the form 
  $q / \prod_{f\in Q} \left(D + \Phi_T(f)\right)$, whose Fourier transform is
  some complicated distribution supported on the cone positively
  spanned by $\{D + \Phi_T(f) : f \in Q\}$. 
  By the assumption that $p$ is in general position, we can drop any
  such term for which that set $\{D + \Phi_T(f) : f \in Q\}$ does not
  $\rationals$-span $T^* \oplus \integers D$.

  In the first step of this expansion, we write $\alpha_1$ as a
  linear combination of the lex-first basis found in 
  $\{D + \Phi_T(f) : f \in \gamma\}$. 
  The coefficients involved are the $v^{\sigma(1)} \cdot \alpha_1$ where
  $\sigma(1)$ varies over that lex-first basis. (We are beginning to
  build partial fractions schemata $\sigma$; so far we have 
  specified the value at $1$.) That gives an initial expansion of
  $$ 
  \frac{\prod_{i=1}^k \alpha_i}
  {\prod_{f \in \gamma} \left(D + \Phi_T(f)\right)}
  = \left( {\prod_{i=2}^k \alpha_i} \right)
  \frac{\alpha_1}  {\prod_{f \in \gamma} \left(D + \Phi_T(f)\right)} 
  =   \left( {\prod_{i=2}^k \alpha_i} \right)
  \sum_{\sigma(1)} 
  \frac{v^{\sigma(1)} \cdot \alpha_1}
  {\prod_{f \in \gamma \setminus \{f_{\sigma(1)}\}} \left(D + \Phi_T(f)\right)}
  $$
  At this point we must split into cases, because the lex-first basis in
  $(D + \Phi_T(f_i)\ :\ i=0,\ldots,\dim X, i\neq \sigma(1))$ 
  depends on $\sigma(1)$.

  Each time we bring in an $\alpha_i$, we linearly expand it in the lex-first
  basis in the remaining terms in the denominator. If there
  is no such basis, then as
  explained above the term may be dropped. Partial fractions expansion then 
  eats each term from this basis in turn, 
  and the choice of which one is recorded
  as $\sigma(i)$; the coefficient incurred is $v^{\sigma(i)} \cdot \alpha_i$.
  After doing this $k$ times, the final coefficient on 
  $1 / {\prod_{f \in \gamma'} \left(D + \Phi_T(f)\right)} $
  is a sum over partial fraction schemata $\sigma$, of the product
  of $v^{\sigma(i)} \cdot \alpha_i$:
  $$ \frac{\prod_{i=1}^k \alpha_i}
  {\prod_{f \in \gamma} \left(D + \Phi_T(f)\right)} 
  = \sum_\sigma
  \frac{ \left( v^{\sigma(1)} \tensor \cdots \tensor v^{\sigma(k)} \right)\cdot
    (\alpha_1 \tensor \cdots \tensor \alpha_k) }
  {\prod_{f \in \gamma \setminus \{f_{\sigma(i)}\}} \left(D + \Phi_T(f)\right)}
  = \sum_{\gamma' \subseteq \gamma}
  \frac{ \tau_{\gamma',\gamma}\cdot (\alpha_1 \tensor \cdots \tensor \alpha_k)}
  {\prod_{f \in \gamma} \left(D + \Phi_T(f)\right)} 
  $$
  up to terms dropped because, as explained above, they don't affect
  the measure near $p$.

  We now sum over $\gamma$, 
  then Fourier transform as in the proof above of theorem \ref{thm:myDH}, 
  and we arrive at the complicated statement of theorem \ref{thm:myDHgen}.
\end{proof}

\section{Constraints on the coefficients $v_\gamma$}\label{sec:coeffs}

\subsection{An easy case of the multiplicities $v(Z)_{(f_0,f_1),Y}$}

There is an important special case in which these multiplicities
from theorem \ref{thm:vrecurrence} are easy to compute.

\begin{Proposition}\label{prop:easyPone}
  Let $\{ v(Z)_{(f_0,f_1),Y} \}$ be as in theorem \ref{thm:vrecurrence}.
  By corollary \ref{cor:downwardPone},
  there exists an $S$-equivariant map $\beta : \Pone \to X$
  such that $\beta(\infty) = f_1 \neq \beta(0)$. 

  Assume $Z$ smooth at $f_1$.
  Then the image of $\beta$ is $\overline{Z^{f_1}}$, a rational
  curve smooth away from $\beta(0)$.

  Assume further that $\beta(0) = f_0$. Then 
  $$ v(Z)_{(f_0,f_1),Y} 
  = \deg \beta(\Pone)
  = \frac{\Phi_S(f_1) - \Phi_S(f_0)}{\big| Stab_S(Z^{f_1}) \big|}
  = \frac{\Phi_T(f_1) - \Phi_T(f_0)}{-wt(T_{f_1} Z^{f_1})}$$
  where $Stab_S(Z^{f_1})$ denotes the 
  generic stabilizer subgroup scheme of $S$ acting on $Z^{f_1}$,
  and $wt(T_{f_1} Z^{f_1})$ denotes the $T$-weight on 
  the tangent line $T_{f_1} Z^{f_1}$. (The numerator is a multiple thereof.)
\end{Proposition}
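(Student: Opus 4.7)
The plan is to establish the three claims in order: the structure of the image of $\beta$, the multiplicity interpretation of $v(Z)_{(f_0,f_1),Y}$, and the two explicit expressions for $\deg\beta(\Pone)$.

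For the first claim, smoothness of $Z$ at $f_1$ provides a $T$-equivariant decomposition $T_{f_1} Z = T_{f_1} Z_{f_1} \oplus T_{f_1} Z^{f_1}$ into $S$-attracting and $S$-repelling parts, with $Z_{f_1}$ and $Z^{f_1}$ agreeing locally at $f_1$ with these affine subspaces. Since $f_1 = \min(Y)$, $Y_{f_1}$ is dense in $Y$, so $Y \subseteq \overline{Z_{f_1}}$; combined with the codimension assumption $\dim Y = \dim Z - 1$, this forces $\dim Z^{f_1} = 1$ at $f_1$ (any smaller and no nonconstant $\beta$ exists, any larger and $Y$ could not fit in $\overline{Z_{f_1}}$). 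Hence $\overline{Z^{f_1}}$ is a smooth rational curve near $f_1$, and the image of $\beta$ coincides with $\overline{Z^{f_1}}$, both being irreducible $1$-dimensional $S$-orbit closures through $f_1$. Smoothness propagates along the $\Gm$-orbit by equivariance, leaving only $\beta(0)$ as a potentially singular point.

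For the second claim, theorem~\ref{thm:vrecurrence}(3) identifies $v(Z)_{(f_0,f_1),Y}$ with the multiplicity of $Y$ in $Z \cap \PP H$, where $\PP H$ is cut out by a $T$-semi-invariant $b \in V^*$ of weight $\Phi_T(f_0)$ not vanishing at $f_0$ (lemma~\ref{lem:span}). This multiplicity equals $\mathrm{ord}_{f_1}(b|_{\overline{Z^{f_1}}})$ because $\overline{Z^{f_1}}$ meets $Y$ transversely at $f_1$ in $Z$: $T_{f_1} Y \subseteq T_{f_1} Z_{f_1}$ is attracting while $T_{f_1}\overline{Z^{f_1}} = T_{f_1} Z^{f_1}$ is repelling, and these complementary subspaces span $T_{f_1} Z$ (using that $Y$ inherits smoothness at $f_1$ from its identification with $\overline{Z_{f_1}}$ locally). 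Pulling back, $\beta^*(b)$ is an $S$-weight-$\Phi_S(f_0)$ section of $\beta^*\calO(1)$; by the analysis from lemma~\ref{lem:bbvanishing} its zeros concentrate entirely at $\infty$ with total order $\Phi_S(f_1)-\Phi_S(f_0) = \deg\beta \cdot \deg\beta(\Pone)$. Dividing by the ramification index of $\beta$ at $\infty$, which equals $\deg\beta$ (the map factors as the $h$-fold cyclic cover $\Pone \to \tilde C$ followed by the normalization $\tilde C \to \overline{Z^{f_1}}$, the latter being an isomorphism at the smooth point above $f_1$), gives $\mathrm{ord}_{f_1}(b|_{\overline{Z^{f_1}}}) = \deg\beta(\Pone)$.

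For the middle equality of the final formula, corollary~\ref{cor:downwardPone}'s identity $\Phi_S(f_1)-\Phi_S(f_0) = \deg\beta\cdot\deg\beta(\Pone)$ combines with the identification $\deg\beta = |\mathrm{Stab}_S(Z^{f_1})| = h$ (a generic $S$-orbit in $\overline{Z^{f_1}}$ has stabilizer $\mu_h$, so the orbit map is $h$-to-$1$). For the $T$-weight version, $\overline{Z^{f_1}}$ is $T$-invariant since $T$ centralizes $S$ and preserves $Z$, so the normalization $\tilde C \iso \Pone$ inherits a $T$-action. On a smooth $T$-invariant rational curve the tangent weights at the two fixed points are opposite, so $\mathrm{wt}(T_{f_0}\tilde C) = -W$ where $W = \mathrm{wt}(T_{f_1} Z^{f_1})$ (again using that the normalization is an isomorphism near $f_1$). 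The line bundle $\tilde\beta^*\calO(1)$ on $\tilde C$ has degree $\deg\beta(\Pone)$ and $T$-weights $\Phi_T(f_i)$ at the fixed points, and the standard relation among degree, fiber weights, and tangent weight on $\Pone$ delivers $\Phi_T(f_1) - \Phi_T(f_0) = \deg\beta(\Pone)\cdot(-W)$.

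The main obstacle is the transversality step in the middle claim, which must simultaneously invoke $f_1 = \min(Y)$ (to place $Y$ inside $\overline{Z_{f_1}}$), smoothness of $Z$ at $f_1$ (to linearize both $\overline{Z_{f_1}}$ and $\overline{Z^{f_1}}$ via the tangent-space decomposition), and the codimension assumption on $Y$ (to force $\dim Z^{f_1} = 1$ and hence $T_{f_1}\overline{Z^{f_1}}$ to be the whole repelling direction). Once transversality is in hand, the remainder is routine weight arithmetic on $\Pone$ and keeping careful track of ramification versus stabilizer.
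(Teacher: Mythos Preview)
Your argument is correct and follows essentially the same route as the paper's own proof: use smoothness of $Z$ at $f_1$ (via \cite{BB}) to obtain the tangent-space splitting and transversality of $\overline{Z_{f_1}}$ and $\overline{Z^{f_1}}$, force $\dim Z^{f_1}=1$ from the codimension hypothesis on $Y$, then reduce the multiplicity of $Y$ in the hyperplane section to the order of vanishing of $b$ at $f_1$ along the curve $\overline{Z^{f_1}}$, and finish with weight arithmetic on $\Pone$. Your treatment of the ramification index of $\beta$ at $\infty$ and the explicit $T$-weight computation via the normalization are more detailed than the paper's, which simply asserts that the order of vanishing equals $\deg\beta(\Pone)$ and calls the $T$-version ``straightforward,'' but the underlying ideas are the same.
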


\begin{proof}
  Since $Z$ is smooth at $f_1$, so are $Z_{f_1},Z^{f_1}$,
  with $\dim Z_{f_1} + \dim Z^{f_1} = \dim C$
  where $C$ is the component of $Z$ containing $f_1$, and
  the intersection $Z_{f_1} \cap Z^{f_1} = \{f_1\}$ is transverse \cite{BB}.

  Since $Z_{f_1}$ is smooth and connected, its closure is irreducible,
  and $Y$ is supposed to be an irreducible component thereof.
  Hence $Y = \overline{Z_{f_1}}$ and is smooth at $f_1$.
  Similarly $\overline{Z^{f_1}}$ is irreducible and smooth at $f_1$.
  Since $Y$ is assumed to be codimension $1$ in $Z$, 
  we infer $\dim \overline{Z^{f_1}} = 1$; 
  since $\overline{Z^{f_1}}$ contains the curve $\beta(\Pone)$
  they must be equal.

  Let $b=0$ be the equation of a $T$-invariant hyperplane missing $f_0$
  (existence guaranteed by lemma \ref{lem:hesselink}). We are attempting
  to determine the order of vanishing of $b$ along 
  $Y = \overline{Z^{f_1}}$. By the transversality
  of the intersection $\overline{Z_{f_1}} \cap \overline {Z^{f_1}}$,
  we may instead restrict $b$ to $\beta(\Pone)$, and determine the
  order of vanishing of $b$ at the point $f_1 \in \beta(\Pone)$.

  This is the degree of the curve $\beta(\Pone)$, computed in
  terms of $\Phi_S$ in corollary \ref{cor:downwardPone}. To compute
  in terms of $\Phi_T$ requires that one extend the $S$-weight analysis
  in lemma \ref{lem:bbvanishing} to the $T$-weights, which is straightforward.
\end{proof}

The ``$\beta(0)=f_0$'' condition in the proposition holds 
for flag manifolds (as follows from the next lemma), 
but is is not otherwise automatic.
If $Z = F_1$ is the example from section \ref{ssec:TVs}, and
$Y=\overline{Z_b}$, then $\overline{Z^{f_1}}$ is the $\Pone$
connecting $b$ and $c$; it doesn't make it down to $d$.

\begin{Corollary}\label{cor:oldAK}
  Let $X$ be smooth (and equidimensional),
  and assume each intersection $X_f^g := X_f \cap X^g$ is transverse.
  Then $\Delta(X,S)$ is the order complex of the poset $(X^T,\geq)$.

  Fix a maximal $\gamma$, and for each $i=1,\ldots,\dim X$, 
  assume that $\overline{X_{f_{i-1}}}$ is smooth at $f_i$.
  Then each $\overline{X_{f_{i-1}}^{f_i}}$
  is a (possibly cuspidal) rational curve, and
  $ v_\gamma = \prod_{i=1}^{\dim X} 
  \deg \overline{X_{f_{i-1}}^{f_i}}. $
\end{Corollary}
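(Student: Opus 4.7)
The plan is to reduce the computation of $v_\gamma$ to an iterated application of proposition \ref{prop:easyPone}. For the first claim, under smoothness of $X$ and transversality of all intersections $X_f \cap X^g$, Bia\l ynicki-Birula \cite{BB} shows that the B-B decomposition is a stratification with $\overline{X_f} = \coprod_{g \geq f} X_g$. As noted in the stratification discussion in section \ref{ssec:TVs}, this forces $\Delta(X,S)$ to coincide with the order complex of $(X^T,\geq)$; proposition \ref{prop:equidim} further shows this poset is ranked (by $f \mapsto \dim X - \dim X_f$), so in any maximum chain $\gamma$ every consecutive pair $f_{i-1} < f_i$ is a cover relation. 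Since $X$ is smooth and equidimensional its components are disjoint, and I may restrict to a single component, so assume $X$ irreducible.

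Under these assumptions I claim each maximum $\gamma$ has a \emph{unique} witness $\bargamma$, namely $Y_i = \overline{X_{f_i}}$. Indeed $Y_0 = X$ by irreducibility; inductively, by stratification and the cover relation, $(Y_{i-1})_{f_i} = \overline{X_{f_{i-1}}} \cap X_{f_i} = X_{f_i}$, and this is smooth and irreducible by BB, so $\overline{(Y_{i-1})_{f_i}} = \overline{X_{f_i}}$ is the unique codimension-$i$ component. By theorem \ref{thm:vrecurrence} (equivalently the witness formula $v(X)_\bargamma = \prod m_{\bargamma,i}$ from section \ref{ssec:proofs}), one gets $v_\gamma = v(X)_\bargamma = \prod_{i=0}^{\dim X} m_{\bargamma,i}$, with $m_{\bargamma,0} = 1$ since $X$ is reduced.

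For each $i \geq 1$ I apply proposition \ref{prop:easyPone} to $Z = Y_{i-1}$: smoothness at $f_i$ is exactly the corollary's hypothesis. The map $\beta : \Pone \to Y_{i-1}$ has $\beta(\infty) = f_i$ and $\beta(0) = g$ for some $g \in Y_{i-1}^T$. Stratification of $Y_{i-1} = \overline{X_{f_{i-1}}}$ forces $g \geq f_{i-1}$; the $S$-invariant curve $\beta(\Pone)$ contains points of $X_g$ whose $z\to\infty$ limit is $f_i$, giving $g \leq f_i$, and corollary \ref{cor:downwardPone} excludes $g = f_i$. The cover relation then pins $g = f_{i-1}$, and the proposition yields $m_{\bargamma,i} = \deg \beta(\Pone) = \deg \overline{Y_{i-1}^{f_i}}$. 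To match this to the statement of the corollary, decompose $Y_{i-1}^{f_i} = \coprod_{h \geq f_{i-1}} (X_h \cap X^{f_i})$: non-emptiness needs $h \leq f_i$, and the cover relation leaves only $h \in \{f_{i-1}, f_i\}$; transversality makes the $h = f_{i-1}$ piece one-dimensional while the $h = f_i$ piece is the point $\{f_i\}$. Hence $\overline{Y_{i-1}^{f_i}} = \overline{X_{f_{i-1}}^{f_i}}$, which is rational (possibly cuspidal) as the image of $\Pone$ under $\beta$, and multiplying over $i$ yields the claimed formula.

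The main obstacle is the sink identification $\beta(0) = f_{i-1}$: it needs both the stratification property (restricting the fixed points of $Y_{i-1}$ to lie above $f_{i-1}$) and the cover relation coming from the ranked structure of $(X^T, \geq)$ via proposition \ref{prop:equidim}. Without maximality of $\gamma$ one only knows $f_{i-1} \leq g < f_i$ without being able to conclude $g = f_{i-1}$, and without smoothness of $\overline{X_{f_{i-1}}}$ at $f_i$ proposition \ref{prop:easyPone} would not apply at all.
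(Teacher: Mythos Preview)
Your proof is correct and follows essentially the same approach as the paper's: invoke \cite{BB} for the stratification, then identify $\beta(0)=f_{i-1}$ via the cover relation and apply proposition \ref{prop:easyPone}. You package the dimension argument through the ranked-poset statement in proposition \ref{prop:equidim} (and are more explicit about the unique witness and the identification $\overline{Y_{i-1}^{f_i}}=\overline{X_{f_{i-1}}^{f_i}}$), whereas the paper argues the same contradiction directly from lemmas \ref{lem:dimdecrease} and \ref{lem:maxwitnesses}; the substance is the same.
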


\begin{proof}
  In \cite{BB} it is proven that this transversality condition implies
  that the B-B decomposition is a stratification. 
  Hence $\overline{X_{f_0,\ldots,f_k}} = \overline{X_{f_k}}$ as long as
  $(f_0,\ldots,f_k)$ is a chain in $(X^T,\geq)$, so nonempty for each chain.
  Therefore $\Delta(X,S)$ is the order complex.

  We now show that under 
  the $S$-equivariant map $\beta: \Pone \to \overline{X_{f_{i-1}}^{f_i}}$
  constructed in proposition \ref{prop:easyPone}, we have $\beta(0)=f_i$.
  For otherwise, 
  $\overline{X_{f_{i-1}}} \supsetneq \overline{X_{\beta(0)}} 
  \supsetneq \overline{X_{f_i}}$, 
  with $\dim \overline{X_{f_i}} \leq \dim \overline{X_{f_{i-1}}} - 2$
  by lemma \ref{lem:dimdecrease}. 
  But by lemma \ref{lem:maxwitnesses}, $\dim X_{f_i} = \dim X_{f_{i-1}} - 1$,
  contradiction.
  
  The rest is proposition \ref{prop:easyPone} and
  the recurrence in theorem \ref{thm:vrecurrence}.
\end{proof}

This extra smoothness, of $\overline{X_f}$ at each $g$ covering $f$,
is known to hold for Schubert varieties (essentially from their normality).
However, this corollary was proven in the symplectic situation
\cite[theorem 1]{LittDH} without explicitly requiring this extra
smoothness, so perhaps it is automatic.

We describe this story (from \cite{LittDH}) in the case that $X$ is a
flag manifold, though to recapitulate it properly would involve
introducing a great deal of wholly standard notation, which we omit.
When $Y\subset Z$ are Schubert varieties
$\overline{X_{w r_\beta}} \subset \overline{X_w} \subseteq G/P$
projectively embedded in the $G$-representation $V_\lambda$,
the coefficient is 
$v(Z)_{(f_0,f_1),Y} = (w r_\beta \cdot \lambda - w \cdot \lambda)/\beta$.
This is easily derived from the Chevalley-Monk rule for intersecting
a Schubert variety with a hyperplane, and is the basic step
in \cite{PS}.

\subsection{Linear relations among the $\{v_\gamma\}$}\label{ssec:linrels}

The Duistermaat-Heckman function is piecewise polynomial, as can be 
seen from either their formula or theorem \ref{thm:myDH}. However,
theorem \ref{thm:myDH} hugely overestimates the number of pieces --
it predicts a great many walls between regions of different polynomials 
that turn out to not actually be different. For example, in the case of
a toric variety, the D-H function is $1$ on the entire polytope, 
but theorem \ref{thm:myDH} breaks the polytope into a triangulation.

So anywhere within $\Phi_T(X)$ that we know for some other reason
there is {\em not} a jump in
the D-H function -- and we shall look nearby $\Phi_T(\min(X))$ --
we get a linear condition among the coefficients $\{v_\gamma\}$. 
While the connection may be obscured by the Fourier 
transform, the proposition following is essentially built on this idea.

\begin{Proposition}\label{prop:linrels}
  Assume $X$ has a unique supporting fixed point $\min(X)$.
  Let  $C_{\min(X)} X \subseteq 
  T_{\min(X)} X$ 
  denote the tangent cone to $X$ at $\min(X)$, 
  and the tangent space to $X$ at $\min(X)$, respectively. 
  The ($T$-invariant) tangent cone carries a Chow class 
  $[C_{\min(X)} X] \in A^*_T(T_{\min(X)} X) \iso \Sym(T^*)$.
  Denote by $[\{\vec 0\} \in T_{\min(X)} X] \in A^*_T(T_{\min(X)} X)$
  the evident Chow class (a product of $T$-weights).

  Then
  $$ \sum_{\gamma} \frac{v_\gamma }
  {\prod_{f\in\gamma,\, f\neq \min(X)} 
  \left(\Phi_T(f) - \Phi_T(\min(X))\right) }
  = \frac
  {\left[C_{\min(X)} X \subseteq T_{\min(X)} X\right]} 
  {[ \{\vec 0\} \in T_{\min(X)} X]}
  \quad\text{as ratios in $\Sym(T^*)$}$$
  where the sum is over maximum-length closure chains in $X$. 
\end{Proposition}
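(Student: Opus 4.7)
The plan is to derive this identity by specializing the formula of theorem \ref{thm:ChowDH} for $[\hatX]/[\vec 0]$ at $D = -\Phi_T(\min(X))$, and then recognizing the resulting expression as the equivariant multiplicity of the tangent cone via proposition \ref{prop:congruence}.

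First I check that every maximum-length closure chain $\gamma$ begins with $\min(X)$. Indeed, by lemma \ref{lem:existwitnesses} such a $\gamma$ admits a witness $\bargamma = ((f_0\in Y_0),\ldots,(f_{\dim X}\in Y_{\dim X}))$; lemma \ref{lem:maxwitnesses} makes $Y_0$ a top-dimensional component of $X$; and lemma \ref{lem:irrsupport} identifies $f_0 = \min(Y_0)$ as a supporting fixed point of $X$, which by hypothesis is $\min(X)$. Consequently the denominator of each summand in theorem \ref{thm:ChowDH} contains $(D + \Phi_T(\min(X)))$ exactly once, so after multiplying through by this factor and specializing $D = -\Phi_T(\min(X))$ the right-hand side of theorem \ref{thm:ChowDH} becomes exactly the left-hand side of the present proposition.

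It remains to identify that same specialization of the left side, $(D + \Phi_T(\min(X)))\cdot [\hatX]/[\vec 0]$ evaluated at $D = -\Phi_T(\min(X))$, with the claimed ratio of Chow classes. I apply proposition \ref{prop:congruence} at $f = \min(X)$. Its hypothesis that the $\Phi_T(\min(X))$-weight space $L \subseteq V$ be one-dimensional can be arranged by first replacing $V$ with the smallest $T$-invariant linear subspace containing $\hatX$ (this alters none of the quantities in the proposition): lemma \ref{lem:uniquesupport} gives $X = \overline{X_{\min(X)}}$, and lemma \ref{lem:span} then delivers the one-dimensionality. Since $[\vec 0] = (D + \Phi_T(\min(X)))\cdot [0\times L]$ in $A^*_{T\times\Gm}(V)$, the specialized quantity is $[\hatX]/[0\times L]\bigl|_{D = -\Phi_T(\min(X))}$, which by proposition \ref{prop:congruence} equals $[C_{\min(X)} X \subseteq T_{\min(X)}\PP V]\big/[\{\vec 0\} \subseteq T_{\min(X)}\PP V]$, with no zero denominators.

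The last step is a bookkeeping translation from $T_{\min(X)}\PP V$ to $T_{\min(X)} X$. Since $C_{\min(X)} X \subseteq T_{\min(X)} X \subseteq T_{\min(X)} \PP V$ with $T_{\min(X)} X$ a $T$-invariant linear subspace, iterating the ``$Y \subseteq H$'' case of proposition \ref{prop:Chowfacts} along a $T$-invariant chain of hyperplanes cutting $T_{\min(X)} \PP V$ down to $T_{\min(X)} X$ gives
$$ [C_{\min(X)} X \subseteq T_{\min(X)} \PP V] = [T_{\min(X)} X \subseteq T_{\min(X)} \PP V] \cdot [C_{\min(X)} X \subseteq T_{\min(X)} X], $$
and likewise with $\{\vec 0\}$ in place of $C_{\min(X)} X$. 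The common factor cancels, producing the stated right-hand side. The only delicate point in the argument is that the specialization $D = -\Phi_T(\min(X))$ could in principle yield $0/0$; but the parenthetical assertion of proposition \ref{prop:congruence} that ``neither side involves division by $0$'' is precisely what lets the specialization be carried out unambiguously.
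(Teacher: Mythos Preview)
Your proof is correct and follows essentially the same route as the paper: shrink $V$ via lemma \ref{lem:span}, invoke proposition \ref{prop:congruence}, specialize the formula from theorem \ref{thm:ChowDH} at $D = -\Phi_T(\min(X))$, and pass from $T_{\min(X)}\PP V$ to $T_{\min(X)} X$. You have simply filled in steps the paper leaves implicit, notably the verification that every maximum-length closure chain starts at $\min(X)$ and the explicit cancellation in the passage between the two ambient tangent spaces.
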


\begin{proof}
  By shrinking $V$ to the linear span of $\hatX$ and invoking
  lemma \ref{lem:span}, we may assume that the $\Phi_T(\min(X))$-weight
  space in $V$ is $1$-dimensional. That lets us invoke
  proposition \ref{prop:congruence}, which says that 
  $$ [\hatX] / [0\times L] 
  \equiv [C_{\min(X)} X \subseteq T_{\min(X)} \PP V] 
  / [\{\vec 0\} \in T_{\min(X)} \PP V] $$
  as rational functions in $\Sym(T^*)[D]$ specialized at $D = -\Phi_T(f)$.

  Theorem \ref{thm:ChowDH} gives us a formula for the left side of
  this equation,
  which specializes at $D = -\Phi_T(f)$ to the left side of the
  desired equation.

  The right side is the equivariant multiplicity of $X$ at $\min(X)$,
  which can be computed inside either $T_{\min(X)} \PP V$ or $T_{\min(X)} X$.
\end{proof}

\begin{Corollary}\label{cor:linrels}
  Assume in addition that there exists a set $Q \subseteq X^T$ of fixed points
  such that the weights in $T_{\min(X)} X$, with repetition,
  are 
  $\{\Phi_T(q) - \Phi_T(\min(X)) : q\in Q\}$. 
  Then this formula can be rewritten inside $\Sym(T^*)$ as
  $$ \sum_\gamma v_\gamma 
  \prod_{f \in X^T\setminus \gamma} \left(\Phi_T(f) - \Phi_T(\min(X))\right)
  =  
  [C_{\min(X)} X
 \subseteq T_{\min(X)} X] \prod_{f \in X^T \atop f\notin Q\cup \{\min(X)\}} 
  \left(\Phi_T(f) - \Phi_T(\min(X))\right). $$

  Let $\check\sigma : T^* \to \integers$ be a linear functional,
  and write $\Phi_R := \check\sigma \circ \Phi_T$. 
  Assume that $\delta := \{f \in X^T : \Phi_R(f) = \Phi_R(\min(X))\}$
  is a closure chain, and that $\delta \not\subseteq Q\cup \{\min(X)\}$.
  Then
  $$ \sum_{\gamma \supseteq \delta} v_\gamma 
  \prod_{f\in  X^T\setminus\gamma} \left(\Phi_R(f) - \Phi_R(\min(X))\right)
  = 0 $$
  where the left side is a sum over maximum-length closure chains.
\end{Corollary}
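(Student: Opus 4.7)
The plan is to obtain the second equation from the first by specialization along $\check\sigma$, after first noting that the first displayed equation is a formal consequence of Proposition \ref{prop:linrels}. To verify the first equation, the hypothesis on $Q$ identifies $[\{\vec 0\} \in T_{\min(X)} X]$ with $\prod_{q \in Q}(\Phi_T(q) - \Phi_T(\min(X)))$, and one then multiplies both sides of the proposition's rational identity by $\prod_{f \in X^T,\, f \neq \min(X)}(\Phi_T(f) - \Phi_T(\min(X)))$. On the left the denominator $\prod_{f \in \gamma,\, f \neq \min(X)}$ for each term cancels into this product, leaving $\prod_{f \in X^T \setminus \gamma}$; this uses that $\min(X) \in \gamma$ for every maximum-length closure chain, which holds because $X$ has $\min(X)$ as its unique supporting fixed point (so every such $\gamma$ begins there, by lemmas \ref{lem:irrsupport} and \ref{lem:maxwitnesses}). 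On the right the $Q$-indexed factors cancel against part of the big numerator, leaving the stated product over $f \notin Q \cup \{\min(X)\}$.

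For the second equation, the idea is to apply the ring homomorphism $\Sym(T^*) \to \integers$ induced by evaluation at the cocharacter $\check\sigma$, under which each $\Phi_T(f) - \Phi_T(\min(X))$ becomes $\Phi_R(f) - \Phi_R(\min(X))$. On the left-hand side, for any maximum-length closure chain $\gamma$ not containing $\delta$, choose $f_0 \in \delta \setminus \gamma$; then $f_0 \in X^T \setminus \gamma$ and $\Phi_R(f_0) - \Phi_R(\min(X)) = 0$ by the defining property of $\delta$, so that entire term vanishes. Only the terms with $\gamma \supseteq \delta$ survive, producing the left-hand side of the claimed identity. On the right-hand side, the hypothesis $\delta \not\subseteq Q \cup \{\min(X)\}$ furnishes some $f_0 \in \delta \setminus (Q \cup \{\min(X)\})$; this $f_0$ appears as a factor in $\prod_{f \notin Q \cup \{\min(X)\}}(\Phi_R(f) - \Phi_R(\min(X)))$ and contributes a zero, killing the whole right-hand side.

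Assembling these observations yields the corollary. There is no real obstacle: the argument is essentially bookkeeping about which factors vanish under the specialization along $\check\sigma$. The one subtle point worth flagging is that the specialization is legal only because we are working with a polynomial identity in $\Sym(T^*)$ rather than a rational one; clearing denominators, which is precisely what the first displayed equation accomplishes, is what licenses the substitution in the second step.
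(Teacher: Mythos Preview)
Your proof is correct and follows essentially the same approach as the paper: clear denominators by multiplying through by $\prod_{f \neq \min(X)}(\Phi_T(f)-\Phi_T(\min(X)))$, then apply the ring homomorphism $\Sym(T^*)\to\integers$ induced by $\check\sigma$ and track which factors vanish. You even supply a bit more detail than the paper does (justifying $\min(X)\in\gamma$ via lemmas \ref{lem:irrsupport} and \ref{lem:maxwitnesses}, and flagging why the specialization is legitimate only after passing to the polynomial identity).
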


\begin{proof}
  To get the first formula above, multiply
  both sides of the one from proposition \ref{prop:linrels} by 
  $\prod_{f\in X^T,\ f\neq \min(X)} \left(\Phi_T(f) - \Phi_T(\min(X))\right)$.

  The functional $\check\sigma$ induces a homomorphism
  $\Sym(T^*) \to \integers$, $\lambda \mapsto \rho(\lambda)$;
  applying it to the first formula we get the equation
  $$ \sum_\gamma v_\gamma 
  \prod_{f \in X^T\setminus \gamma} \left(\Phi_R(f) - \Phi_R(\min(X))\right)
  = 
  [C_{\min(X)} X \subseteq T_{\min(X)} X] 
  \prod_{f \in X^T \atop f\notin Q\cup \{\min(X)\}} 
  \left(\Phi_R(f) - \Phi_R(\min(X))\right). $$
  For any $\gamma\not\supseteq \delta$, one of the terms in
  the product
  $\prod_{f \in X^T\setminus \gamma} \left(\Phi_R(f) - \Phi_R(\min(X))\right)$
  is zero, so on the left side 
  it is enough to sum over $\gamma \supseteq \delta$.

  By the condition $\delta \not\subseteq Q\cup\{\min(X)\}$, 
  one of the terms in the right-hand product is zero.
\end{proof}

Some remarks:
\begin{itemize}
\item We used the notation
  $\Phi_R$ because the Pontrjagin dual of $\check\sigma$ is
  a homomorphism $R : \Gm \to T$, whose 
  moment map on the fixed points is this $\Phi_R$.
\item
  One can obtain many more such conditions by
  applying corollary \ref{cor:linrels} to components of
  $\overline{X_{f_0,\ldots,f_k}}$ of codimension $k$ in $X$, 
  and using theorem \ref{thm:vrecurrence}.  
\item If $X$ is irreducible, we can study instead its image under 
  the $\beta$ from proposition \ref{prop:branch} (picking up a
  factor from the degree of $\beta$ to its image).
  In this smaller projective space, it is easy to see that
  a set $Q$ as postulated in corollary \ref{cor:linrels} must exist,
  even for $T_{\min(X)} \PP V \geq T_{\min(X)} X$.
\item If $X$ has not only isolated fixed points but isolated fixed curves, as 
  in \cite{GKM}, then this set $Q$ exists canonically: take the $T$-fixed 
  points other than $\min(X)$ on the $T$-fixed curves passing through 
  $\min(X)$.  This condition holds for flag manifolds and toric varieties, 
  though not for the Bott-Samelson manifold from section \ref{sssec:BS}.
\end{itemize}

As usual, things are particularly simple for $X$ a toric variety,
where the simplicial complex $\Delta(X,S)$ is
a triangulation of the moment polytope $P$,
whose vertices correspond naturally to $X^T$.
The set $Q$ can (and must) be taken to be the vertices sharing an edge
with $\min(X)$. 
\junk{
  Now let $\delta$ be a closure chain with $1$ fewer than the
  maximum number of elements, a ``ridge'' in the complex $\Delta(X,S)$.
  By the fact that $\Delta(X,S)$ is homeomorphic to a ball, the face
  $\delta$ is contained in either one or two facets of $\Delta(X,S)$,
  depending on whether the ridge is exterior to $P$ or interior.
  In the case that it is interior, so $\delta \subset \gamma_1,\gamma_2$, 
  this equation says that the coefficients $v_{\gamma_1},v_{\gamma_2}$
  can be determined from one another. This is not a surprise, of course,
  since inside $P$ the D-H function has no jumps at all.
}

\subsection{Assembling the coefficients $\{v_\bargamma\}$}
\label{ssec:assembling}

The formula $\deg X = \sum_\gamma v_\gamma$ mentioned after
theorem \ref{thm:myDH}, summing over maximum-length closure chains, 
can be refined to $\deg X = \sum_\bargamma v_\bargamma$ summing
over maximum-length witnesses. We now give an inductive version
of this formula.

Given a closure chain $\gamma = (f_0,\ldots,f_k)$ and a component $Y$
of $\overline{X_{f_0,\ldots,f_k}}$ of dimension $\dim X - k$ (which 
requires $\gamma$ to be the initial segment of a maximum-length chain),
define
$$ v_{\gamma,Y} 
:= \sum_{\bargamma = (f_0 \in Y_0,\ldots,f_k \in Y_k = Y)} v(X)_{\bargamma} $$
where the summands were defined in section \ref{ssec:proofs}.
If $k=\dim X$, then $Y=\{f_{\dim X}\}$ by lemma \ref{lem:maxwitnesses},
and therefore $v_{\gamma,Y}$ is the $v_{\gamma}$ 
also defined in section \ref{ssec:proofs}.

\begin{Proposition}\label{prop:assembling}
  Fix $\gamma = (f_0,\ldots,f_k)$ and a component $Y$
  of $\overline{X_{f_0,\ldots,f_k}}$ of dimension $\dim X - k$.
  Pick a $T$-invariant hyperplane $\PP H$ not containing $f_k$, 
  and let $\{Z_i\}$ be the irreducible components of $Y_k \cap \PP H$.
  Then 
  $$ v_{(f_0,\ldots,f_k),Y} \deg Y 
  = \sum_i v_{(f_0,\ldots,f_k,\min(Z_i)), Z_i} \deg Z_i. $$
\end{Proposition}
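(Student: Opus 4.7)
The plan is to unfold both sides of the claimed identity using the witness formula $v_{\gamma,Y} = \sum_\bargamma v_\bargamma$, where $\bargamma$ ranges over witnesses $(f_0 \in Y_0,\ldots,f_k \in Y_k = Y)$, and then to exhibit a bijection between pairs $(\bargamma, Z_i)$ and extended witnesses $\bargamma_i := (\bargamma, \min(Z_i) \in Z_i)$, from which the identity will follow by pairing witness multiplicities with projective Bezout.

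First I would check that each $\bargamma_i$ is actually a valid witness, i.e., that $Z_i$ is an irreducible component of $\overline{Y_{\min(Z_i)}}$. The key tool is lemma~\ref{lem:hesselink} (applicable because $Y$ has unique supporting fixed point $f_k$, by lemma~\ref{lem:irrsupport}), which identifies $Y \cap \PP H$ set-theoretically with $\Union_{g \neq f_k} Y_g$. Since $Z_i \subseteq Y \cap \PP H$ and $\PP H$ misses $f_k$, the point $\min(Z_i)$ is some $g \neq f_k$, and $Z_i$ meets (hence by irreducibility lies in) $\overline{Y_{\min(Z_i)}}$; the dimension count $\dim Z_i = \dim Y - 1 = \dim \overline{Y_{\min(Z_i)}}$ then forces $Z_i$ to be a full-dimensional component.

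Next, by the definition of the witness multiplicities $m_{\bargamma,j}$ in section~\ref{ssec:proofs}, the new factor introduced when passing from $\bargamma$ to $\bargamma_i$ is exactly $m_i$, the multiplicity of $Z_i$ in the scheme-theoretic hyperplane section $Y \cap \PP H$, so $v_{\bargamma_i} = v_\bargamma \cdot m_i$. Combined with the projective Bezout identity $\deg Y = \deg(Y \cap \PP H) = \sum_i m_i \deg Z_i$ (valid since $Y$ is irreducible and not contained in $\PP H$), this yields
$$ v_{(f_0,\ldots,f_k),Y}\,\deg Y = \sum_\bargamma v_\bargamma \sum_i m_i \deg Z_i = \sum_i \deg Z_i \sum_\bargamma v_{\bargamma_i}, $$
and the inner sum on the right is precisely $v_{(f_0,\ldots,f_k,\min(Z_i)),Z_i}$ read as summing over witnesses whose level-$k$ component is the fixed $Y$.

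The main obstacle is the bookkeeping in that last identification. As defined in section~\ref{ssec:assembling}, $v_{(f_0,\ldots,f_k,\min(Z_i)),Z_i}$ sums over all witnesses $\bargamma'$ ending at $Z_i$, and a priori some could factor through a different component $Y' \neq Y$ of $\overline{X_{f_0,\ldots,f_k}}$ that also contains $Z_i$. In the present setup where $Y$ is fixed and $\{Z_i\}$ are specifically the components of $Y \cap \PP H$, the right-hand side of the proposition is most naturally read as the ``$Y_k = Y$'' slice of this quantity; summing the proposition over all such $Y$ then reassembles the full decomposition, consistent with the refinement $\deg X = \sum_\bargamma v_\bargamma$ announced at the start of section~\ref{ssec:assembling}.
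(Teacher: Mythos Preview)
Your approach is essentially the same as the paper's: use the Bezout identity $\deg Y = \deg(Y\cap \PP H) = \sum_i m_i \deg Z_i$ and unwind the definition of $v_\bargamma$ to pass from witnesses of length $k+1$ to those of length $k+2$. The paper's own proof is just these two lines, with the verification that each $\bargamma_i$ is a legitimate witness and the computation $v_{\bargamma_i} = m_i\, v_\bargamma$ left implicit; you have simply filled in those steps.

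Your final paragraph correctly flags a genuine ambiguity in the statement as written: the quantity $v_{(f_0,\ldots,f_k,\min(Z_i)),Z_i}$ is defined as a sum over \emph{all} witnesses ending at $Z_i$, not only those with $Y_k = Y$, and nothing rules out $Z_i$ being a divisor in some other component $Y' \neq Y$ of $\overline{X_{f_0,\ldots,f_k}}$. Your reading---that the right-hand side should be understood as the $Y_k = Y$ slice, so that summing the identity over all such $Y$ recovers the full witness decomposition---is the correct one and is consistent with the paper's intended use of the proposition as a refinement of $\deg X = \sum_\bargamma v_\bargamma$. The paper's proof does not address this point, so your proposal is in fact more careful than the original.
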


\begin{proof}
  $$ \deg Y_k = \deg (Y_k \cap \PP H) = \sum_i m_i \deg(Z_i) $$
  where the $m_i$ are the multiplicities of the components $Z_i$ in
  the scheme $Y_k \cap \PP H$. The result follows by unwinding 
  the definition of $v(X)_{\bargamma}$.
\end{proof}

\section*{Acknowledgements}

This project has been gestating since shortly after \cite{LittDH}, and indeed 
this paper might be considered only a progress report toward \cite{AKfuture}.  
In that long period I have discussed it with many people. 
I best remember having useful discussions with Michel Brion (to whom I
am particularly grateful for remarks on previous versions of this
article), Rebecca Goldin, Francisco Santos, and Catalin Zara, and with
Mark Haiman and Bernd Sturmfels about the Hilbert scheme of $n$ points
in the plane.

\bibliographystyle{alpha}    

\end{document}